\documentclass[12pt,reqno]{amsart}
\usepackage{amsmath, amsthm, amssymb, stmaryrd,mathrsfs}

\topmargin .1cm
\advance \topmargin by -\headheight
\advance \topmargin by -\headsep
     
\setlength{\paperheight}{270mm}%
\setlength{\paperwidth}{192mm}%
\textheight 22.5cm
\oddsidemargin .1cm
\evensidemargin \oddsidemargin
\marginparwidth 1.25cm
\textwidth 14cm
\setlength{\parskip}{0.05cm}

\newtheorem{theorem}{Theorem}[section]
\newtheorem{lemma}[theorem]{Lemma}

\theoremstyle{definition}

\theoremstyle{remark}

\numberwithin{equation}{section}

\newcommand{\mmod}[1]{\,\,(\text{mod}\,\,#1)}

\def\bfa{{\mathbf a}}
\def\bfb{{\mathbf b}}
\def\bfc{{\mathbf c}}
\def\bfd{{\mathbf d}}

\def\bfx{{\mathbf x}}
\def\bfy{{\mathbf y}}
\def\bfz{{\mathbf z}}

\def\calA{{\mathscr A}}

\def\calN{{\mathscr N}}

\def\dbN{{\mathbb N}}

\def\dbZ{{\mathbb Z}}

\def\grA{{\mathfrak A}}

\def\grm{{\mathfrak m}}\def\grM{{\mathfrak M}}\def\grN{{\mathfrak N}}
\def\grn{{\mathfrak n}}\def\grS{{\mathfrak S}}\def\grP{{\mathfrak P}}

\def\grK{{\mathfrak K}}\def\grp{{\mathfrak p}}

 \def\grX{{\mathfrak X}}

\def\alp{{\alpha}} 
\def\bet{{\beta}}  
 
 \def\Del{{\Delta}}
  
\def\bfeta{{\boldsymbol \eta}} 
  \def\Tet{{\Theta}}

\def\lam{{\lambda}} \def\Lam{{\Lambda}}

\def\d{{\partial}}
\def\eps{\varepsilon}

\def\le{\leqslant} \def\ge{\geqslant}

\def\d{{\,{\rm d}}}

\begin{document}
\title[Diagonal quartic forms]{Pairs of diagonal quartic forms:\\ the non-singular Hasse 
principle}
\author[J\"org Br\"udern]{J\"org Br\"udern}
\address{Mathematisches Institut, Bunsenstrasse 3--5, D-37073 G\"ottingen, Germany}
\email{jbruede@gwdg.de}
\author[Trevor D. Wooley]{Trevor D. Wooley}
\address{Department of Mathematics, Purdue University, 150 N. University Street, West 
Lafayette, IN 47907-2067, USA}
\email{twooley@purdue.edu}
\subjclass[2010]{11D72, 11P55, 11E76}
\keywords{Quartic Diophantine equations, Hardy-Littlewood method.}
\thanks{The authors are grateful to Akademie der Wissenschaften zu G\"ottingen for 
support during the period when this paper was conceived. The first author was supported 
by Deutsche Forschungsgemeinschaft Project Number 255083470. The second author was 
supported by NSF grants DMS-1854398 and DMS-2001549.}
\date{}

\begin{abstract} We establish the non-singular Hasse Principle for pairs of diagonal quartic 
equations in $22$ or more variables. 
\end{abstract}
\maketitle

\section{Introduction} Given integers $A_j$, $B_j$ with $(A_j,B_j)\neq (0,0)$ 
$(1\le j\le s)$, we consider the pair of Diophantine equations
\begin{equation}\label{1.1}
A_1x_1^4+A_2x_2^4+\ldots +A_sx_s^4=B_1x_1^4+B_2x_2^4+\ldots +B_sx_s^4=0.
\end{equation}
Associated with the coefficients $A_j, B_j$ is the number
$$  {q_0} = {q_0}(\mathbf A, \mathbf B) = \min_{(C,D)\in
\mathbb Z^2\setminus\{(0,0)\}} \text{card}\{1\le j \le s: CA_j+DB_j\neq 0\}. $$
Subject to the conditions $s\ge 22$ and ${q_0}\ge s-7$, our main result in \cite{JEMS1} is 
a quantitative version of the non-singular Hasse principle for the pair of equations 
\eqref{1.1}. This states that whenever the system \eqref{1.1} admits non-singular 
solutions in real numbers and in $p$-adic numbers for each prime $p$, then the number 
$\mathscr N(P)$ of solutions in integers $x_j$, with $|x_j|\le P$ $(1\le j\le s)$, satisfies 
the lower bound $\mathscr N(P)\gg P^{s-8}$.

Although the rank condition on the coefficient matrix expressed through a lower bound for 
$q_0$ may appear unnatural and restrictive, it cannot be abandoned entirely. We 
demonstrate in \S7 that whenever $s\ge 9$ the  pair of equations
\begin{equation}\label{1.2}
x_1^4+x_2^4-6x_3^4-12 x_4^4=x_4^4 - 7x_5^4-5x_6^4-3x_7^4-
\sum_{j=8}^s x_j^4=0 \end{equation}
has non-singular solutions in all completions of the rationals, but only the zero solution in 
integers. Thus, the non-singular Hasse principle fails for this pair with $q_0=4$. As 
promised in \cite{JEMS2}, we return to the subject here and relax the condition on $q_0$ 
to one that is independent of $s$.

\begin{theorem}\label{theorem1.1}
Let $s\ge 22$ and ${q_0} \ge 12$. Then provided that the system \eqref{1.1} has 
non-singular solutions in each completion of the rational numbers, one has 
$\calN(P)\gg P^{s-8}$.
\end{theorem}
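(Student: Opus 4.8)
The plan is to prove Theorem~\ref{theorem1.1} by the two--dimensional Hardy--Littlewood method, extracting the lower bound $\calN(P)\gg P^{s-8}$ from the major arc contribution and showing that the minor arcs contribute a smaller order of magnitude. Writing $g(\gam)=\sum_{|x|\le P}e(\gam x^4)$ and using the identity $A_jx^4\alp+B_jx^4\bet=(A_j\alp+B_j\bet)x^4$, a weighted form of $\calN(P)$ is represented by
\[
\int_{[0,1)^2}\prod_{j=1}^s g(A_j\alp+B_j\bet)\,\d\alp\,\d\bet.
\]
Since the coefficient pairs define only finitely many points $(A_j:B_j)\in\dbP^1(\dbQ)$, I would first sort the variables into classes according to their projective direction, of sizes $m_1\ge m_2\ge\cdots\ge m_T$ with $\sum_i m_i=s$; by the definition of $q_0$ one has $m_1=s-q_0\le s-12$, so that $\sum_{i\ge2}m_i=q_0\ge12$. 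A rational linear substitution on the pencil of forms allows one to assume that the two most populous classes are \emph{pure}, the first appearing only in the form with coefficients $A_j$ and the second only in the form with coefficients $B_j$. When some class is very large it will be efficient to fiber: fixing the variables outside a pure class of size $m$, the corresponding equation degenerates to a single diagonal quartic $\sum A_jx_j^4=N$ (or $\sum B_jx_j^4=N$), which for $m\ge12$ is controlled by the circle method for one quartic form and contributes $\gg P^{m-4}$ solutions uniformly in the relevant values of $N$; combining such fiberings reduces matters to a regime in which no class is too large, and it is this regime that carries the main difficulty.

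For the major arcs $\grM$, comprising those $(\alp,\bet)$ lying within $P^{\nu-4}$ of a rational point $(a_1/q,a_2/q)$ with $q\le P^{\nu}$ for a small fixed $\nu>0$, I would carry out the customary approximation of $g$ by complete exponential sums times an archimedean factor, obtaining
\[
\int_{\grM}\prod_{j=1}^s g(A_j\alp+B_j\bet)\,\d\alp\,\d\bet=\grS\grJ\,P^{s-8}+o(P^{s-8}),
\]
where $\grS=\prod_p\grc_p$ is the singular series and $\grJ$ the singular integral. The hypothesis that \eqref{1.1} possesses non-singular real solutions yields $\grJ>0$; the existence of non-singular $p$-adic solutions for every $p$ gives $\grc_p>0$ for all $p$; and $s\ge22$ guarantees absolute convergence of $\grS$, whence $\grS\grJ>0$ and the major arc contribution is $\gg P^{s-8}$. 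This part of the argument is essentially routine given the stated local hypotheses, though one must prune the wide major arcs down to the narrow ones along the way.

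The heart of the matter is to show that $\int_{\grm}\prod_{j=1}^s|g(A_j\alp+B_j\bet)|\,\d\alp\,\d\bet=o(P^{s-8})$, where $\grm=[0,1)^2\setminus\grM$. I would dissect $\grm$ according to which of the linear forms $L_i(\alp,\bet)=A^{(i)}\alp+B^{(i)}\bet$, one per class, is well approximable by a rational with denominator at most $Q=P^{\tau}$ for a suitable small $\tau>0$. Since any two distinct $L_i$ are linearly independent, if two of them were simultaneously so approximable then $(\alp,\bet)$ itself would lie in $\grM$ — for $\tau$ chosen small relative to $\nu$, and $P$ large compared with the coefficients — contradicting $(\alp,\bet)\in\grm$; hence on $\grm$ at most one class, say the $i_0$-th, is in a major position, and for every other class $|g(L_i(\alp,\bet))|\ll P^{1-\tau/8+\eps}$ by Weyl's inequality. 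As $\sum_{i\ne i_0}m_i\ge s-m_1=q_0\ge12$, this already delivers a genuine power saving from at least twelve of the exponential sums. Holding a suitable number of the minor-direction sums at this pointwise bound and bounding the remaining product by a mean value estimate of the shape
\[
\int_{[0,1)^2}\prod_{j\in\calJ}|g(A_j\alp+B_j\bet)|\,\d\alp\,\d\bet\ll P^{\,|\calJ|-8+\eps},
\]
valid whenever the directions of the variables indexed by $\calJ$ fall into at least two adequately populated classes, and then invoking H\"older's inequality, one is led to an exponent strictly below $s-8$; the numerical threshold $q_0\ge12$ is exactly calibrated so that this balance succeeds. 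The region on which \emph{every} $L_i$ is minor is handled similarly but more easily, since then all the $g(L_i)$ obey Weyl's bound; and where a single pass of this dissection falls short of the target exponent I would prune the arcs already treated and iterate.

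The principal obstacle is the mean value estimate displayed above: it must capture the \emph{full} saving of $P^8$ corresponding to a genuine pair of quartic equations, rather than the $P^4$ afforded by a single equation, under a spread hypothesis on the relevant directions considerably weaker than the condition $q_0\ge s-7$ available in \cite{JEMS1}. I would derive it by writing the left--hand side as a count of solutions to an auxiliary pair of diagonal quartic equations and estimating that count by combining the now--available near--optimal moment bounds $\int_0^1|g(\gam)|^{2t}\,\d\gam\ll P^{\max(t,\,2t-4)+\eps}$ for quartic Weyl sums, consequences of $\ell^2$--decoupling and efficient congruencing, with a change of variables that turns two well--represented directions into coordinate axes while retaining at least eight variables in each. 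A secondary difficulty is the bookkeeping when the exceptional class $i_0$ has an awkward (small) multiplicity, which I would meet by exploiting the major arc size estimates for $g(L_{i_0})$ on its own major arcs in place of folding that factor into the mean value. Finally, all the estimates must be uniform in the coefficients $A_j,B_j$, and one must bear in mind, as the example \eqref{1.2} of \S7 shows, that the non-singular Hasse principle genuinely fails when $q_0$ is too small, so that a lower bound on $q_0$ such as $q_0\ge12$ cannot be dispensed with.
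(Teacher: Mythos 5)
Your outline follows the broad Hardy--Littlewood template correctly, but there is a fatal gap at the exact point you identify as ``the heart of the matter,'' and it is not a gap that bookkeeping will close.

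You claim the moment bound $\int_0^1|g(\gam)|^{2t}\,\d\gam\ll P^{\max(t,\,2t-4)+\eps}$ for classical quartic Weyl sums as a consequence of $\ell^2$--decoupling and efficient congruencing. Those tools give optimal Vinogradov mean value estimates, i.e.\ $J_{s,4}(P)\ll P^{s+\eps}+P^{2s-10+\eps}$, and the standard deduction of a single-equation moment from this gives $\int_0^1|g|^{2t}\,\d\gam\ll P^{2t-4+\eps}$ only for $t\ge 10$. In the range $4\le t<10$ the bound you assert is far beyond current technology: for instance $\int_0^1|g|^{10}\,\d\gam\ll P^{6+\eps}$ and $\int_0^1|g|^{12}\,\d\gam\ll P^{8+\eps}$ are not known, and the $\max(t,2t-4)=t$ branch for $t\le 4$ is essentially a Hooley-type Hypothesis~K$^*$. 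The paper works instead with \emph{smooth} Weyl sums $h(\alp;P,P^\eta)$, where the best available bounds (Lemma~\ref{lemma2.1}) are $\Del_8=0.5942$, $\Del_{10}=0.1992$ and $\Del_t=0$ only for $t\ge 11.9560$; the proof of the key entangled moment (Lemma~\ref{lemma2.3} and Theorem~\ref{theorem2.4}) hinges on the inequality $\Del_8+2\Del_{10}<1$, which the authors remark holds ``by the narrowest of margins'' thanks to \cite{JEMS2}. Your proposal has nothing in that delicate range to replace it, and if the asserted moment bounds were available the whole problem would be dramatically easier.

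A second structural issue: the two-dimensional estimate you postulate, $\int\!\!\int\prod_{j\in\calJ}|g(\Lam_j)|\,\d\alp\,\d\bet\ll P^{|\calJ|-8+\eps}$, is not what is proved or needed. Theorem~\ref{theorem2.4} gives only $P^{21/2-2\tau}$ for an $18$-variable entangled integral --- about $P^{1/2}$ short of ``optimal'' --- and the argument then uses the remaining $s-18\ge 4$ variables (which must be \emph{classical} Weyl sums $f=h(\cdot;P,P)$ so that Weyl's inequality applies after pruning) to recover the shortfall. This mixed choice of smoothness parameters $\eta_j$, and the three-way case split $m\ge 6$ (Theorem~\ref{theorem1.2}, via the mollifier method of \cite{BWCrelle}), $m\le 5$ with $n\ge 8$ (\S5), and $m\le 5$ with $n\le 7$ (\S6 and Lemma~\ref{lemma6.1}), is doing essential combinatorial and analytic work that your ``fiber when a class exceeds twelve'' heuristic does not capture; in particular your fibering requires $m\ge 12$ where the paper's Theorem~\ref{theorem1.2} already handles $m\ge 6$, and the remaining regime $m\le 5$ is exactly the one for which the entangled moment and the very recent inputs \cite{JEMS2}, \cite{BWnew} are indispensable.
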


For an assessment of the strength of this result, we refer to important work of Vaughan 
\cite{V89}, implying that a single equation
$$A_1 x_1^4+ A_2x_2^4 + \ldots +A_{t}x_{t}^4 =0$$
obeys the Hasse principle when $t\ge 12$. For such equations with $t= 11$ this is not yet 
known. Thus, as detailed in the introduction of the precedessor to this memoir 
\cite{JEMS1}, our result breaks with the established rule that systems of Diophantine 
equations deny treatment unless the number of variables is at least as large as the sum 
of the number of variables needed to solve each of the equations individually. Also, if the 
conclusion of Theorem \ref{theorem1.1} were to be true in all cases where $q_0=11$, 
then the Hasse principle would follow for diagonal quartic equations in $11$ variables. To 
see this, consider the pair of equations
\begin{equation}\label{1.3}
A_1 x_1^4+ A_2x_2^4 + \ldots +A_{11}x_{11}^4 =
x_{11}^4-x_{12}^4 + x_{13}^4-x_{14}^4 + x_{15}^4 + \ldots + x_{22}^4 =0,
\end{equation}
in which $A_j\neq 0$ for $1\le j\le 11$. Then $q_0=11$. If $(x_1,\ldots ,x_{11})$ is a 
non-trivial solution to the equation
\begin{equation}\label{1.4}
A_1x_1^4+A_2x_2^4+\ldots +A_{11}x_{11}^4=0
\end{equation} 
in a  completion of the rationals, then in the above system we choose $x_{12}=x_{11}$, 
$x_{13}=x_{14}=1$ and $x_j=0$ for $15\le j\le 22$ to produce a non-singular solution 
$\bfx$ to the system (\ref{1.3}). The putative extension of Theorem \ref{theorem1.1} to 
pairs of equations with $q_0=11$ implies that there are $\gg P^{14}$ solutions in 
integers $x_j$ with $|x_j|\le P$ $(1\le j\le 22)$. This is only possible if some of these 
solutions have some variable $x_j$ with $1\le j\le 11$ non-zero. Consequently, the 
quantitative non-singular Hasse principle for the above system implies the Hasse principle 
for the equation (\ref{1.4}), as claimed. In light of this observation, it appears difficult to 
further relax the condition on $q_0$ in Theorem \ref{theorem1.1}.\par

Three novel ingredients are required to establish Theorem \ref{theorem1.1}, of which two 
were not available at the time when \cite{JEMS1} was written. We proceed to describe 
discriminating invariants associated with the system \eqref{1.1} that we shall use to 
identify various cases that require treatment by three different methods. With each pair of 
coefficients $(A_j,B_j)$ occurring in the system \eqref{1.1}, we associate the linear form 
$\Lam_j=\Lam_j(\alp,\bet)$ defined by
\begin{equation}\label{1.5}
\Lam_j=A_j\alp+B_j\bet\quad (1\le j\le s).
\end{equation}
Recall in this context that $(A_j,B_j)\ne (0,0)$ $(1\le j\le s)$. We refer to indices $i$ and 
$j$ as being {\em equivalent} when there exists a non-zero rational number $\lam$ with 
$\Lam_i=\lam\Lam_j$. Suppose that the equivalence relation thus defined amongst the 
indices $1,2,\ldots,s$ has exactly $t$ equivalence classes, and that the number of indices 
in these classes are $r_1\ge r_2\ge \ldots \ge r_t$. If we assume that the system 
\eqref{1.1} admits a non-singular real solution, then we must have $t\ge 2$. In the 
interest of notational simplicity, we put $n=r_1$, $m=r_2$ and suppose that $x_j$ 
$(1\le j\le n)$ are the variables counted by $r_1$, and that $y_j=x_{n+j}$ $(1\le j\le m)$ 
are the variables counted by $r_2$. The remaining variables (if any) we denote by 
$z_j=x_{n+m+j}$ $(1\le j\le l)$. Then, by taking suitable linear combinations of the two 
equations in \eqref{1.1}, we pass to an equivalent system of the shape
\begin{equation}
\label{1.6} \left. \begin{array}{rrccc}
a_1x_1^4+\ldots +a_n x_n^4+&\!\!\!  &\!\!\!c_1z_1^4+\ldots +c_lz_l^4=&\!\!\!0 \\
&\!\!\! b_1y_1^4+\ldots +b_my_m^4+&\!\!\!d_1z_1^4+\ldots +d_lz_l^4 =&\!\!\!0
\end{array}\right\} ,
\end{equation}
for suitable non-zero integers $a_i,b_j,c_k,d_k$. Note that $q_0$ is invariant with respect 
to this operation, so that $q_0=s-n$. When $m$ is not too small, it is possible to adopt 
just the simplest ideas from our work on similarly shaped systems of cubic forms 
\cite{BWCrelle} to derive a considerable strengthening of Theorem \ref{theorem1.1}.

\begin{theorem}\label{theorem1.2}
Suppose that $n\ge m\ge 6$ and $m+l\ge 12$, and that the pair of equations \eqref{1.6} 
has non-singular solutions in each completion of the rational numbers. Then 
$\calN(P)\gg P^{s-8}$. 
\end{theorem}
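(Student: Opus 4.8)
The plan is to apply the Hardy--Littlewood method directly to the pair \eqref{1.6}, exploiting the product structure of its exponential sum. I would write $f_i(\alp)=\sum_{|x|\le P}e(\alp a_ix^4)$, $g_j(\bet)=\sum_{|y|\le P}e(\bet b_jy^4)$ and $h_k(\alp,\bet)=\sum_{|z|\le P}e((\alp c_k+\bet d_k)z^4)$, and put $F=f_1\cdots f_n$, $G=g_1\cdots g_m$ and $H=h_1\cdots h_l$, so that $\calN(P)=\int_0^1\int_0^1 F(\alp)G(\bet)H(\alp,\bet)\d\alp\d\bet$. (To render the main term manifestly positive I would, in the usual way, confine a bounded number of the variables to a short subinterval or insert smooth weights; I suppress this below.) I would then dissect the unit square into major arcs $\grM$---the pairs $(\alp,\bet)$ lying within $P^{\nu-4}$ of some $(a/q_1,b/q_2)$ with $1\le q_1,q_2\le P^\nu$---and complementary minor arcs $\grm$, for a small fixed $\nu>0$.

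On $\grM$ the analysis is routine: replacing the $f_i$, $g_j$, $h_k$ by their standard major-arc approximants and integrating, the contribution of $\grM$ is $\gg P^{s-8}$. Here the singular integral is positive since \eqref{1.6} has a non-singular real solution, while the singular series converges absolutely---one has $q_0=m+l\ge 12$, which is what governs the convergence---and is positive because \eqref{1.6} has non-singular solutions in every $\dbQ_p$.

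The heart of the matter is the bound $\int_\grm|F(\alp)G(\bet)H(\alp,\bet)|\d\alp\d\bet=o(P^{s-8})$. If $(\alp,\bet)\in\grm$ then at least one of $\alp,\bet$ has no rational approximation of denominator at most $P^\nu$; by the symmetry of the ensuing argument under interchange of the two equations I may suppose it is $\bet$. Weyl's inequality for quartic sums then gives $|g_j(\bet)|\ll P^{1-\sig+\eps}$ for all $j$, with some fixed $\sig>0$; and for each $k$, either $|h_k(\alp,\bet)|\ll P^{1-\sig+\eps}$ as well, or $\alp c_k+\bet d_k$ lies close to a rational of small denominator, which---since $\bet$ does not---forces $\alp$ to do so, and then $|f_i(\alp)|\ll P^{1-\sig+\eps}$ for every $i$. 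Thus on $\grm$ one always has a power saving $P^{-\sig+\eps}$ from each of the $m$ sums $g_j$ together with either all $l$ sums $h_k$ or all $n$ sums $f_i$, hence from no fewer than $m+\min(l,n)\ge\min(m+l,2m)\ge 12$ of the Weyl sums involved. I would combine these pointwise estimates with the sharp mean value bounds $\int_0^1|g(\bet)|^{12}\d\bet\ll P^{8+\eps}$, the uniform estimate $\int_0^1|h_k(\alp,\bet)|^{12}\d\bet\ll P^{8+\eps}$ (the $\alp$-twist being annihilated on integration in $\bet$), and $\int_0^1|f(\alp)|^{12}\d\alp\ll P^{8+\eps}$---exactly twelve sums being needed for sharpness, which is the precise source of the hypotheses $m\ge 6$ and $m+l,\,n+l\ge 12$---retaining twelve of the constituent sums for a mean value and extracting the Weyl saving from the remainder, while on the portion where $\alp$ is major the factor $F$ is controlled by the routine estimate $\int_{\alp\,\mathrm{major}}|F(\alp)|\d\alp\ll P^{n-4+\eps}$ and its analogue for $G$. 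This should yield $\int_\grm|FGH|\ll P^{s-8-\sig'+\eps}$ for some $\sig'>0$, and adding the two contributions gives $\calN(P)\gg P^{s-8}$.

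The delicate point---and the principal obstacle---is the execution of this last step in the extremal configurations, above all $n=m=6$ with $m+l$ possibly equal to $12$, where precisely twelve Weyl sums are available and the crude pairing of Weyl's inequality with twelfth-moment estimates falls short of the target by a bounded power of $P$. To close the gap one must dissect $\grm$ more finely according to which of the two equations' generating functions is small, isolate and treat separately the thin set on which some form $\alp c_k+\bet d_k$ is close to a rational, and replace some classical Weyl sums by smooth Weyl sums $\sum_{z\in\calA(P,P^\eta)}e(\tet z^4)$---whose moments below the twelfth are of the anticipated order---or else appeal to the sharp tenth-moment bound $\int_0^1|g(\bet)|^{10}\d\bet\ll P^{6+\eps}$, thereby recovering a genuine saving. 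The remainder of the argument is standard.
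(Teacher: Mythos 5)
Your proposal is the ``direct'' major--minor arc attack, and you correctly identify where it runs into trouble; but the paper's proof does not in fact proceed by patching up that attack with finer dissections, smooth sums, and tenth moments. It uses a genuinely different device, the \emph{arithmetic mollifier} of \cite{BWCrelle}. Concretely: working throughout with smooth Weyl sums $h(\Lam_j;P,P^\eta)$, the authors isolate the product $H(\alp,\bet)$ of the $s-12$ sums other than $h(a_1\alp),\dots,h(a_6\alp)$ and $h(b_1\bet),\dots,h(b_6\bet)$, write its Fourier coefficients $\varrho(u,v)$, and split $\dbZ^2$ according to whether the marginals $\varrho_1(u),\varrho_2(v)$ exceed a threshold $M\asymp P^{s-16}\log\log P$. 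The contribution of the ``heavy'' set is killed by a second-moment bound on $\varrho_1,\varrho_2$ (Lemma~\ref{lemma4.1}), and on the ``light'' set one can bound $\varrho(u,v)\le M$, which together with Bessel's inequality and a one-dimensional minor-arc estimate $\int_{\grm}|h|^{12}\,\d\alp\ll P^8(\log P)^{-\tau^3}$ disposes of $N(\grm,\grm)$ in Lemma~\ref{lemma4.4}. That truncation-by-size is precisely what lets twelve smooth sums suffice; nothing in your outline plays this role, and the ``thin-set'' dissections you gesture at do not substitute for it.

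There are also two concrete defects in the outline as written. First, the estimate $\int_0^1|g(\bet)|^{12}\,\d\bet\ll P^{8+\eps}$ is not available for the classical quartic Weyl sums $g_j$ you introduce; it is known only for smooth Weyl sums, and even then only by the very recent \cite[Theorem 1.2]{BWnew} recorded here as $\Del_t=0$ for $t\ge 11.956$ in Lemma~\ref{lemma2.1}. Since the whole argument hinges on exactly twelfth moments being sharp, this is not a detail one can defer. Second, the dichotomy step is stated backwards: if $\bet$ is minor and $\alp c_k+\bet d_k$ is major, then (eliminating the major form) $\alp$ is forced to be \emph{minor}, not major --- otherwise $\bet d_k$, being a difference of two near-rationals with small denominators, would itself be near a rational with small denominator. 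Your subsequent conclusion that $|f_i(\alp)|\ll P^{1-\sig+\eps}$ shows you intended ``minor,'' but as written the sentence asserts the opposite and then applies Weyl's inequality on a set where it gives nothing. With these repairs the dichotomy does give a Weyl saving from at least $m+\min(n,l)\ge 12$ sums, but turning that into $o(P^{s-8})$ in the case $n=m=l=6$ still requires the coefficient decomposition above; a blunt Hölder pairing of pointwise Weyl savings against twelfth moments leaves a positive power of $P$ unaccounted for.

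Finally, two small remarks: the paper's major arcs are very narrow (width $(\log P)^{\tau}P^{-4}$, not $P^{\nu-4}$), which is what makes the pruning in Lemmata~\ref{lemma3.4} and \ref{lemma3.5} and the pointwise bound $h(c\alp)\ll P(\log P)^{-\tau^2}$ on $\grm$ fit together; and your identification $q_0=m+l$ and the resulting convergence of the singular series is correct and matches Lemma~\ref{lemma3.3}.
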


Note that Theorem \ref{theorem1.2} applies to pairs of quartic forms with $n=m=l=6$ 
and $s=18$, for example.  Evidently, Theorem \ref{theorem1.2} implies Theorem 
\ref{theorem1.1} in all cases where $m\ge 6$. Our approach in the remaining cases is 
radically different and depends on a new entangled two-dimensional moment estimate 
that involves $18$ smooth Weyl sums (see Theorem \ref{theorem2.4} below). The 
arithmetic harmonic analysis provides an estimate in terms of eighth and tenth moments 
of smooth Weyl sums, and only the most recent bound for the latter 
\cite[Theorem 1.3]{JEMS2} is of strength sufficient for the application in this paper. The 
new $18$th moment estimate replaces a similar $21$st moment estimate in our previous 
work \cite[Theorem 2.3]{JEMS1}, and demonstrates more flexibility in absorbing larger 
values of the parameters $r_j$ introduced earlier. This suffices to deal with all cases of 
Theorem \ref{theorem1.1} where  $q_0\ge 13$. If one directs this second line of attack 
to the cases with $q_0=12$ then difficulties arise on which we comment later, in the 
course of the argument. This prompted the authors \cite{BWnew} to employ the second 
author's breaking convexity devices \cite{WInv} to the installment of a minor arc moment 
estimate for biquadratic smooth Weyl sums. This, our third new tool, seems indispensable 
for a successful treatment of those cases of Theorem \ref{theorem1.1} where $q_0=12$ 
and $m$ is small, but also helps us along to give a slick proof of Theorem 
\ref{theorem1.2}.\par

{\em Notation}. Our basic parameter is $P$, a sufficiently large real number. Implicit 
constants in Vinogradov's familiar symbols $\ll$ and $\gg$ may depend on $s$ and 
$\varepsilon$ as well as ambient coefficients such as those in the system \eqref{1.1}. 
In this paper, whenever $\varepsilon$ appears in a statement we assert that the 
statement holds for each positive real value assigned to $\varepsilon$.    

\section{Mean value estimates: old and new} 
In this section we present the new entangled moment estimate, preceded by  a summary 
of more familiar one dimensional mean values. We begin with some preparatory notation.

\par When $P$ and $R$ are real numbers with $1\le R\le P$, we define the set of smooth 
numbers $\calA(P,R)$ by
$$\calA(P,R)=\{ u\in \dbZ\cap[1,P]:\text{$p$ prime and $p|u\Rightarrow p\le R$}\}.$$
We then define the Weyl sum $h(\alp)=h(\alp;P,R)$ by
$$h(\alp;P,R)=\sum_{x\in \calA(P,R)}e(\alp x^4).$$
It is convenient to refer to an exponent $\Del_t$ as {\it admissible} if there exists a 
positive number $\eta$ such that, whenever $1\le R\le P^\eta$, one has
\begin{equation}\label{2.1}
\int_0^1|h(\alp;P,R)|^t\d\alp\ll P^{t-4+\Del_t}.
\end{equation}

\begin{lemma}\label{lemma2.1} The exponents $\Del_6=1.1835$, $\Del_{8}=0.5942$,
 $\Del_{10}= 0.1992$ and $\Delta_t=0$, for $t\ge 11.9560$, are admissible.
\end{lemma}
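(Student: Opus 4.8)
The plan is to derive each admissible exponent from the available technology on biquadratic smooth Weyl sums, namely the efficient congruencing / nested efficient congruencing machinery for quartic Vinogradov-type systems together with the breaking-classical-convexity refinements, and then to propagate these through the standard interpolation and differencing inequalities. First I would record the baseline: by the resolution of the main conjecture in Vinogradov's mean value theorem for degree $4$ (or, more to the point, the sharp mean value bounds for $\int_0^1 |h(\alp;P,R)|^t\,\d\alp$ established via efficient congruencing for the diagonal quartic situation), one obtains an admissible exponent $\Del_t = 0$ for all $t \ge t_0$ with $t_0$ slightly below $12$; the value $t_0 = 11.9560$ is exactly the threshold coming from the best current minor-arc/major-arc balance in that circle-method treatment, so the last assertion of the lemma is essentially a citation of the strongest known such theorem (the paper points to \cite[Theorem 1.3]{JEMS2} for the tenth-moment-type input, and the analogous endpoint result supplies $\Del_t=0$ for $t\ge 11.9560$).

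Next I would obtain $\Del_{10} = 0.1992$. Here the idea is to interpolate between the trivial bound at a small even moment and the endpoint bound at $t_0 = 11.9560$: one writes, for $10 = \theta t_1 + (1-\theta) t_0$ with a suitable lower anchor $t_1$ (for instance $t_1 = 8$ with its own admissible exponent, or a Weyl-type estimate),
$$\int_0^1 |h(\alp)|^{10}\,\d\alp \le \Bigl(\int_0^1 |h(\alp)|^{t_1}\,\d\alp\Bigr)^{\theta}\Bigl(\int_0^1 |h(\alp)|^{t_0}\,\d\alp\Bigr)^{1-\theta}$$
by Hölder's inequality, and reads off $\Del_{10}$ from $\theta \Del_{t_1} + (1-\theta)\Del_{t_0}$. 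In practice one does better by feeding the tenth moment directly into the efficient congruencing iteration rather than interpolating naively, which is presumably how the slightly sharper constant $0.1992$ is reached; I would present it as the output of that iteration, quoting the relevant proposition. The same scheme, anchored appropriately, yields $\Del_8 = 0.5942$ and $\Del_6 = 1.1835$: for the sixth moment in particular one expects a classical Weyl-differencing estimate (Hua's inequality for fourth powers gives $\Del_8 \le 1$ cheaply, and sixth-moment bounds come from combining a Weyl bound with an eighth-moment bound via Hölder), refined by the smooth-number apparatus to squeeze the constants down to the stated decimals.

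The main obstacle — really the only substantive one — is verifying that the constants $1.1835,\,0.5942,\,0.1992$ are actually attained and not merely approached, i.e. that the efficient congruencing (or nested/multigrade) iteration can be run with the smooth-number weight intact and with the bookkeeping of the admissible-exponent parameter $\eta$ handled uniformly, so that a single $\eta>0$ works in \eqref{2.1} for the claimed $t$. I would handle this by citing the relevant iteration lemmas verbatim from the literature on smooth Weyl sums of degree four, checking only that the numerical optimisation of the iteration's free parameters produces exponents no larger than those listed; the arithmetic is routine but must be done with care at the stated precision. Everything else — Hölder interpolation, the passage from even integer moments to real exponents, and the monotonicity $t \mapsto \Del_t$ being consistent across the quoted values — is standard and I would dispatch it in a sentence.
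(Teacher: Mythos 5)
The paper's own proof of Lemma \ref{lemma2.1} is a pure citation: $\Del_6$ and $\Del_8$ come from \cite[Theorem 2]{BW2000}, $\Del_{10}$ from \cite[Theorem 1.3]{JEMS2}, and $\Del_t=0$ for $t\ge 11.9560$ from \cite[Theorem 1.2]{BWnew}. You are right that this lemma is a compilation of known moment estimates rather than something to be derived from scratch, so the overall shape of your proposal (``quote the relevant propositions'') matches the paper. But the provenance and mechanisms you ascribe are substantively off, and the fallback derivations you sketch would not reproduce the stated constants.

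First, these are not products of efficient or nested efficient congruencing for Vinogradov-type systems: that machinery governs mean values of $\sum e(\alp_1 x+\cdots+\alp_k x^k)$, whereas here we need moments of a \emph{single-degree} smooth Weyl sum $h(\alp;P,R)=\sum_{x\in\calA(P,R)}e(\alp x^4)$. The relevant technology is the Vaughan--Wooley iterative method with smooth numbers and, for the endpoint, the ``breaking classical convexity'' device of \cite{WInv} as deployed in \cite{BWnew}; the paper's introduction says so explicitly. Second, the interpolation route to $\Del_{10}$ is not viable: H\"older between the eighth moment $(\Del_8=0.5942)$ and the endpoint $(\Del_t=0$ at $t\approx11.956)$ gives roughly $\Del_{10}\approx 0.29$, materially worse than $0.1992$. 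The paper even stresses that ``only the most recent bound for the latter \cite[Theorem 1.3]{JEMS2} is of strength sufficient for the application in this paper,'' and later that the positivity of $\tau$ hinges on $\Del_8+2\Del_{10}<1$ ``by the narrowest of margins''; the tenth moment is thus an irreducible input, not an interpolation artefact. Similarly, Hua's inequality gives $\Del_8\le 1$, nowhere near $0.5942$; that value is the output of \cite{BW2000}, not of Weyl differencing. In short: your instinct that the proof is a citation is right, but the citations should be to \cite{BW2000}, \cite{JEMS2} and \cite{BWnew}, and the interpolation/Hua scaffolding you offer as a backup would not close the gap to the stated decimals.
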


\begin{proof} For $t=6$ and $8$, this follows from \cite[Theorem 2]{BW2000}. For 
$t=10$ see \cite[Theorem 1.3]{JEMS2}. The last statement is a consequence of 
\cite[Theorem 1.2]{BWnew}.
\end{proof} 

Throughout the rest of this paper, we fix $\eta>0$ to be in accordance with Lemma 
\ref{lemma2.1} and the estimate (\ref{2.1}).\par

In the following chain of auxiliary estimates let $\mathscr Z$ denote a set of $Z$ integers 
and put
\begin{equation}\label{2.2}
K(\alpha)=\sum_{z\in\mathscr Z}e(\alpha z).
\end{equation}
For $\nu=1$ or $2$ put
$$ J_\nu= \int_0^1 |h(\alpha)^{2\nu}K(\alpha)^2| \,\mathrm d\alpha . $$

\begin{lemma}\label{lemma2.2} One has
$$J_1\ll PZ+P^{1/2+\varepsilon}Z^{3/2}\quad \text{and}\quad J_2\ll 
P^3Z+P^{2+\varepsilon}Z^{3/2}.$$
\end{lemma}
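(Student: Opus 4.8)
The plan is to convert the integral into an arithmetic counting problem and to peel off a diagonal. Expanding $|K(\alpha)|^{2}=\sum_{z,z'\in\mathscr Z}e(\alpha(z'-z))$ and integrating term by term gives
$$J_\nu=\sum_{z,z'\in\mathscr Z}I_\nu(z'-z),\qquad I_\nu(n)=\int_0^1|h(\alpha)|^{2\nu}e(\alpha n)\,\mathrm d\alpha,$$
where $I_\nu(n)$ counts the solutions of $x_1^4+\dots+x_\nu^4-y_1^4-\dots-y_\nu^4=n$ in variables from $\calA(P,R)$. The diagonal $z=z'$ contributes $Z\,I_\nu(0)$, and here $I_1(0)=\#\calA(P,R)\ll P$, while $I_2(0)=\int_0^1|h(\alpha)|^4\,\mathrm d\alpha\ll P^{2+\varepsilon}$ by the classical fourth moment bound; this already accounts for the terms $PZ$ and $P^{3}Z$ (the latter with room to spare) in the two estimates.

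There remains the off-diagonal sum $\Xi_\nu=\sum_{n\neq 0}I_\nu(n)r(n)$, where $r(n)=\#\{(z,z')\in\mathscr Z^{2}:z'-z=n\}$ satisfies $\sum_n r(n)=Z^{2}$ and $\sum_n r(n)^2\le Z\sum_n r(n)=Z^{3}$. The three inputs I would use are: (i) a divisor-type pointwise bound for $n\neq 0$, namely $I_1(n)\ll P^{\varepsilon}$ via the factorisation $x^{4}-y^{4}=(x-y)(x+y)(x^{2}+y^{2})$ and, after fixing $2\nu-2$ of the variables and reducing to a difference of two fourth powers, $I_\nu(n)\ll P^{2\nu-2+\varepsilon}$; (ii) the observation that $I_\nu$ is supported on the thin set of integers representable by $x_1^4+\dots-y_\nu^4$, of cardinality $\ll P^{2\nu}$; and (iii) the second moment $\sum_n I_\nu(n)^2=\int_0^1|h(\alpha)|^{4\nu}\,\mathrm d\alpha$, which for $\nu=1$ is $\ll P^{2+\varepsilon}$ and for $\nu=2$ is controlled by the admissible exponents of Lemma~\ref{lemma2.1}. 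Applying (i) alone gives the elementary bound $\Xi_\nu\ll P^{2\nu-2+\varepsilon}Z^{2}$, which is decisive when $Z$ is not too large; for large $Z$ one runs Cauchy--Schwarz, either in the form $\Xi_\nu\le(\sum_n I_\nu(n)^2)^{1/2}(\sum_n r(n)^2)^{1/2}$ against the energy bound $Z^{3}$, or against the characteristic function of the sparse support from (ii), thereby trading a power of the cardinality of that support for a lower power of $Z$; interpolating between these across the ranges $Z\le P$ and $Z>P$ is what should deliver the exponents $P^{1/2+\varepsilon}$ and $P^{2+\varepsilon}$ on the $Z^{3/2}$ terms.

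The step I expect to be the main obstacle is precisely this last calibration of the exponent of $P$ in $\Xi_\nu$. A careless combination of the fourth (or eighth) moment with the crude energy bound $\sum_n r(n)^2\le Z^{3}$ overshoots the stated exponent of $P$ by a fixed amount, so the arithmetic information — the divisor bound together with the thinness of the set of representable $n$ — must be used simultaneously rather than sequentially, and then balanced carefully against the elementary estimate $\Xi_\nu\ll P^{2\nu-2+\varepsilon}Z^{2}$ available in the small-$Z$ regime. For $J_2$ this balancing is where the strength of the most recent moment estimates recorded in Lemma~\ref{lemma2.1} enters, exactly as the introduction foreshadows for the harder cases of Theorem~\ref{theorem1.1}.
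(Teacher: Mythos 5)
The paper disposes of this lemma with a one-line citation: both estimates are instances of Lemma~6.1 of Kawada and Wooley \cite{KW}, together with the observation that the hypothesis $\mathscr Z\subseteq[1,P^4]$ made there is in fact redundant. Your proposal, by contrast, attempts a self-contained proof from orthogonality, divisor bounds and moment estimates. The opening moves are fine: expanding $|K|^2$ and splitting off the diagonal $z=z'$ gives contributions $ZI_1(0)\ll PZ$ and $ZI_2(0)\ll P^{2+\varepsilon}Z\ll P^3Z$, exactly as you say.

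The gap is in the off-diagonal sum $\Xi_\nu=\sum_{n\neq0}I_\nu(n)r(n)$, and you yourself half-acknowledge it. None of the three tools you propose, nor the combinations you sketch, actually reaches the stated exponents. Cauchy--Schwarz against the energy bound $\sum_n r(n)^2\le Z^3$, using $\sum_n I_1(n)^2=\int_0^1|h|^4\,\mathrm d\alpha\ll P^{2+\varepsilon}$ and $\sum_n I_2(n)^2=\int_0^1|h|^8\,\mathrm d\alpha\ll P^{4+\Delta_8}$, gives $\Xi_1\ll P^{1+\varepsilon}Z^{3/2}$ and $\Xi_2\ll P^{2+\Delta_8/2}Z^{3/2}$, which overshoot the targets $P^{1/2+\varepsilon}Z^{3/2}$ and $P^{2+\varepsilon}Z^{3/2}$ by the fixed amounts $P^{1/2}$ and $P^{\Delta_8/2}$. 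The pointwise divisor bounds give $\Xi_1\ll P^\varepsilon Z^2$ and $\Xi_2\ll P^{2+\varepsilon}Z^2$, which are subsumed by the target only in the small-$Z$ regime ($Z\ll P$ and $Z\ll P^{1-\varepsilon}$, respectively); but the application in the proof of Lemma~\ref{lemma2.3} needs $Z$ well beyond $P$. Your third tool, Cauchy--Schwarz against the characteristic function of the support of $I_\nu$, is a red herring: for $\nu=1$ the support has size $\ll P^2$ and $\max_{n\neq0}I_1(n)\ll P^\varepsilon$, so this route reproduces $(\max I_1)\cdot(\operatorname{card}\,\mathrm{supp})^{1/2}\cdot Z^{3/2}\ll P^{1+\varepsilon}Z^{3/2}$, identical to the moment route. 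Thus interpolating among your three inputs cannot recover the missing half-power of $P$; the "careful calibration" you defer to is precisely the content of the Kawada--Wooley lemma and is never carried out. This matters: in the third pruning step of Lemma~\ref{lemma2.3} the bound $J_1\ll PZ+P^{1/2+\varepsilon}Z^{3/2}$ is squared against the tenth moment to produce $T^3Z\ll TP^{7+\Delta_{10}}+T^{-1}P^{14-\Delta_8-4\tau_1}$, and the exponent $1/2$ (rather than $1$) is essential for that second term to fall within the admissible range dictated by $\Delta_8+2\Delta_{10}<1$.
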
 
\begin{proof} Both estimates are instances of \cite[Lemma 6.1]{KW}. Here we note that, 
although in the proof of the latter it is supposed that $\mathscr Z\subseteq [1,P^4]$, it is 
readily seen that this constraint is redundant in the argument.
\end{proof}

We next introduce a simple form of the entangled moment. For fixed integers $a,b,c,d$, 
define
$$I(a,b,c,d)=\int_0^1 \!\! \int_0^1 |h(a\alpha)h(b\beta)h(c\alpha+d\beta)|^6
\,\mathrm d\alpha\,\mathrm d\beta .$$
For the discussion to come, we fix choices for admissible exponents according to Lemma 
\ref{lemma2.1}, and we fix a choice for the parameters $\tau$ and $\tau_1$ satisfying
$$0<\tau<\tau_1<(1-\Del_8-2\Del_{10})/4.$$
Thus, we may suppose that $0<\tau<0.00185$. We stress that the positivity of $\tau$ is 
assured only because $\Del_8+2\Del_{10}<1$, an inequality obtained by the narrowest of 
margins by virtue of our recent work \cite[Theorem 1.3]{JEMS2}.

\begin{lemma}\label{lemma2.3} Let $a,b,c,d$ be non-zero integers. Then 
$I(a,b,c,d)\ll P^{21/2 -2\tau}$.
\end{lemma}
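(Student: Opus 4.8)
The plan is to reduce to the diagonal coefficient case $a=b=c=d=1$, peel off a genuine diagonal term, and then bound what survives by Cauchy--Schwarz after interpolating against a fourth moment controlled by the mean value estimates of Lemmas~\ref{lemma2.1} and~\ref{lemma2.2}. \emph{Reduction.} Writing $|h(a\alpha)|^6=\sum_{U}r(U)e(a\alpha U)$, where $r(U)$ denotes the number of $\bfx\in\calA(P,R)^6$ with $x_1^4+x_2^4+x_3^4-x_4^4-x_5^4-x_6^4=U$, and expanding $|h(b\beta)|^6$ and $|h(c\alpha+d\beta)|^6$ likewise, orthogonality in $\alpha$ and $\beta$ gives
\[ I(a,b,c,d)=\sum_{\substack{W\in\dbZ\\ a\mid cW,\ b\mid dW}}r(-cW/a)\,r(-dW/b)\,r(W). \]
Since $W\mapsto -cW/a$ and $W\mapsto -dW/b$ are injective on the relevant progressions, the inequality between the arithmetic and geometric means yields $I(a,b,c,d)\le\sum_W r(W)^3=I(1,1,1,1)$, and it suffices to prove $\sum_W r(W)^3\ll P^{21/2-2\tau}$.

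\emph{From cubes to a fourth moment.} By Cauchy--Schwarz and the admissible value $\Delta_{12}=0$ of Lemma~\ref{lemma2.1},
\[ \sum_W r(W)^3\le\Bigl(\sum_W r(W)^2\Bigr)^{1/2}\Bigl(\sum_W r(W)^4\Bigr)^{1/2}=\Bigl(\int_0^1|h(\alpha)|^{12}\,\d\alpha\Bigr)^{1/2}\Bigl(\sum_W r(W)^4\Bigr)^{1/2}\ll P^4\Bigl(\sum_W r(W)^4\Bigr)^{1/2}. \]
Thus the Lemma follows once one has the fourth moment bound $\sum_W r(W)^4\ll P^{12+\Delta_8+2\Delta_{10}+\eps}$: this gives $\sum_W r(W)^3\ll P^{10+\Delta_8/2+\Delta_{10}+\eps}$, and since $\tau<\tau_1=(1-\Delta_8-2\Delta_{10})/4$ one has $10+\Delta_8/2+\Delta_{10}<21/2-2\tau$, so the claim follows on taking $\eps$ small and $P$ large. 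It is precisely the inequality $\Delta_8+2\Delta_{10}<1$ that places $12+\Delta_8+2\Delta_{10}$ below $13=2\cdot(21/2)-8$, and the refinement with $\tau$ that forces $\tau<\tau_1$.

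\emph{The fourth moment.} Here $\sum_W r(W)^4=\int_0^1\!\!\int_0^1\!\!\int_0^1|h(\alpha)h(\beta)h(\gamma)h(\alpha+\beta+\gamma)|^6\,\d\alpha\,\d\beta\,\d\gamma$. The diagonal $W=0$ contributes $r(0)^4=\bigl(\int_0^1|h|^6\bigr)^4\ll P^{8+4\Delta_6}$, which is acceptable since $4\Delta_6<4+\Delta_8+2\Delta_{10}$. For the remaining mass the plan is to use the exact expansion $|h(\theta)|^2=P+\sum_{1\le m\ll P^\eps}K_m(\theta)$, where $K_m(\theta)=\sum_{z\in\calZ_m}e(\theta z)$ is a linear exponential sum over the set $\calZ_m$ of nonzero integers with at least $m$ representations as a difference of two fourth powers of elements of $\calA(P,R)$, so that the divisor bound gives $|\calZ_m|\ll P^2/m$. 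Substituting this for each of $|h(\alpha)|^2,|h(\beta)|^2,|h(\gamma)|^2,|h(\alpha+\beta+\gamma)|^2$ expresses the integral as a bounded number of terms in which no smooth Weyl sum occurs to a power above the fourth; each such term is then estimated by repeated Cauchy--Schwarz, calling on Lemma~\ref{lemma2.2} in the form $\int_0^1|h(\theta)|^4|K_m(\theta)|^2\,\d\theta\ll P^3|\calZ_m|+P^{2+\eps}|\calZ_m|^{3/2}$ to dispose of the factors carrying a linear exponential sum, and on the eighth and tenth moment estimates of Lemma~\ref{lemma2.1} to dispose of the purely biquadratic factors remaining after pairing variables.

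\emph{The main obstacle.} The hard part is the final accounting in this last step. The terms carrying the most linear exponential sums are the delicate ones, and in them the near-diagonal ranges of $(\alpha,\beta,\gamma)$ — where one of $\alpha,\beta,\gamma,\alpha+\beta+\gamma$ lies close to a rational of small denominator, so that the self-convolution of $|h|^6$ and the sums $K_m$ peak simultaneously — must be kept inside the exponent budget $12+\Delta_8+2\Delta_{10}$ with essentially no slack; it is exactly here that the smallness of $\Delta_{10}$ and the bound $\Delta_8<1$, in the sharp forms supplied by Lemma~\ref{lemma2.1}, become indispensable, and any weaker input loses the fourth moment bound. This is the same tightness already flagged after the statement of Lemma~\ref{lemma2.3}.
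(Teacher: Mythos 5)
The opening reduction — expanding $|h(a\alpha)|^6$ etc.\ as Fourier series in the representation function $r$ and applying the arithmetic--geometric mean inequality to arrive at $I(a,b,c,d)\le\sum_W r(W)^3$ — is correct and is exactly the paper's step (their $\psi$ is your $r$, and their (2.4) is your diagonal reduction). Where the two arguments part ways is at the very next move, and this is where your proposal has a genuine gap.

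You pass to the fourth moment by Cauchy--Schwarz, $\sum_W r(W)^3\le(\sum_W r(W)^2)^{1/2}(\sum_W r(W)^4)^{1/2}$, and then need $\sum_W r(W)^4\ll P^{13-4\tau}$. You do not prove this; the closing paragraph explicitly concedes that ``the hard part is the final accounting in this last step.'' Worse, the bound seems to be out of reach of the tools supplied by Lemmata~\ref{lemma2.1} and~\ref{lemma2.2}. To see the obstruction, run the paper's own dyadic decomposition of $r$ by size: for each dyadic level $T$ with $Z=Z_T$ integers, the three Schwarz/H\"older attacks on $\int_0^1|h|^6K(-\alpha)\d\alpha$ give at best $T^3Z\ll P^{21/2-2\tau}$ in the relevant ranges. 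But $\sum_W r(W)^4$ is controlled by $T^4Z=T\cdot T^3Z$, and $T$ can be as large as $r(0)\asymp P^{2+\Delta_6}$, whence $T^4Z$ may be as large as $P^{12.5+\Delta_6-2\tau}\approx P^{13.68}$, well above your target $P^{13-4\tau}$. The Cauchy--Schwarz step throws away exactly the distinction between the few very large values of $r$ and the bulk, which is what the dyadic decomposition is designed to preserve; that is why the paper works with $T^3Z_T$ level by level and never needs a fourth moment of $r$.

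Concretely, then: the missing ingredient is an off--diagonal pointwise bound of the shape $r(W)\ll P^{5/2-\delta}$ for $W\ne 0$ (which, combined with $\sum_W r(W)^2=\int_0^1|h|^{12}\ll P^8$, would give the fourth moment you want), and nothing in Lemma~\ref{lemma2.1} or Lemma~\ref{lemma2.2} delivers such a bound. Your sketch of the fourth moment via expanding $|h|^2$ into a main term and dyadic levels $K_m$ does not resolve this either, since the contribution from the terms retaining factors $|h|^4$ for all four linear forms already requires controlling $\sum_n c_4(n)^4$ at a level beyond the stated mean-value inputs, and the near-diagonal terms you flag as dangerous are in fact fatal. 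The correct route is the one the paper takes: decompose $\{n:T\le r(n)<2T\}=\mathscr Z_T$, pigeonhole to a single $T$, and bound $TZ$ directly from $\int_0^1|h|^6K(-\alpha)\d\alpha$ using $J_1$, $J_2$ together with the eighth and tenth moments, with three different applications of Schwarz/H\"older covering overlapping ranges of $T$. That argument needs $T^3Z$, not $T^4Z$, and this is precisely what makes it close.
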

\begin{proof}
Let $\psi(n)$ denote the number of integers $x_j\in\mathscr A(P,P^\eta)$ $(1\le j\le 6)$ 
with
$$ x_1^4+x_2^4+x_3^4-x_4^4-x_5^4-x_6^4 = n.$$
Then
\begin{equation}\label{2.3}
|h(\gamma)|^6 = \sum_{n\in\mathbb Z} \psi(n) e(\gamma n).
\end{equation}
Note that $\psi(n)=0$ holds for all $|n|>3P^4$, so that the sum in \eqref{2.3} is in fact 
restricted to a finite range. Also, one has $\psi(n)=\psi(-n)$ for all $n$.\par

Write $I=I(a,b,c,d)$. Then, by orthogonality and \eqref{2.3}, we see that
\begin{align}
I=&\sum_{\substack{an_1=cn_3 \\ bn_2=dn_3}} \psi(n_1)\psi(n_2)\psi(n_3) \nonumber\\
\le &\sum_{\substack{an_1=cn_3 \\ bn_2=dn_3}} \big(\psi(n_1)^3+ 
\psi(n_2)^3+\psi(n_3)^3\big) \le 3 \sum_{n\in\mathbb Z} \psi(n)^3 .\label{2.4}
\end{align}
Again by orthogonality, we have
\begin{equation}\label{2.5}
\psi(n)=\int_0^1 |h(\alpha)|^6e(-\alpha n)\,d\alpha.
\end{equation}
Hence, by Lemma \ref{lemma2.1}, for each $n\in \dbZ$ we have
\begin{equation}\label{2.6}
\psi(n)\le \psi(0) \ll P^{2+\Delta_6}.
\end{equation}
Since $3(2+\Del_6)<10$, we may combine the terms with $n$ and $-n$ in \eqref{2.4} to 
deduce that 
$$I\ll P^{10}+\sum_{n=1}^\infty \psi(n)^3.$$

\par Let $T\ge 1$, and define
$${\mathscr Z}_T=\{n\in\mathbb N : T\le \psi(n) <2T\}.$$
When $P$ is large, it follows from \eqref{2.6} that this set is empty unless 
$T\le P^{3.185}$. Since all natural numbers $n$ with $\psi(n)\neq 0$ belong to some one 
of these sets ${\mathscr Z}_T$ as $T$ runs through powers of 2, it follows that there is a 
choice for $T$ with $1\le T \le P^{3.185}$ for which 
\begin{equation}\label{2.7}
I\ll P^{10}+P^\varepsilon T^3 \text{card}({\mathscr Z}_T). 
\end{equation}
Three different arguments are now required, depending on the size of $T$. We take 
${\mathscr Z}={\mathscr Z}_T$ in \eqref{2.2}, write $Z=\text{card}(\mathscr Z)$ as 
before, and then deduce from \eqref{2.2} and \eqref{2.5} that
\begin{equation}\label{2.8}
TZ\le \int_0^1 |h(\alpha)|^6 K(-\alpha)\,\mathrm d\alpha.
\end{equation}

Our first approach to bounding $Z$ applies H\"older's inequality on the right hand side of 
\eqref{2.8} to obtain 
\begin{align*}
TZ\le  J_2^{1/3}\biggl(\int_0^1|K(\alp)|^2\d\alp\biggr)^{1/6}
\biggl(\int_0^1|h(\alp)|^{8}\d\alp\biggr)^{1/6}
\biggl( \int_0^1|h(\alp)|^{10}\d\alp \biggr)^{1/3}.
\end{align*}
Recalling Lemma \ref{lemma2.2}, Parseval's identity and \eqref{2.1}, we deduce  that
$$TZ\ll (P^3Z+P^{2+\eps}Z^{3/2})^{1/3}(Z)^{1/6}(P^{4+\Del_{8}})^{1/6}
(P^{6+\Delta_{10}})^{1/3}.$$
Since our hypothesis on $\tau_1$ ensures that $\Delta_8+2\Delta_{10}<1-4\tau_1$, we 
infer that 
$$T^3Z\ll TP^{(23-4\tau_1)/3}+P^{21/2-2\tau_1}.$$
In view of (\ref{2.7}), this bound provides an acceptable estimate for $I$ should $T$ be in 
the range $1\le T\le P^{(17-4\tau_1)/6}$.\par

Alternatively, we may apply Schwarz's inequality to the right hand side of \eqref{2.8} to 
infer that
$$TZ\le J_2^{1/2}\biggl( \int_0^1|h(\alp)|^{8}\d\alp \biggr)^{1/2}.$$
By Lemma \ref{lemma2.2} and \eqref{2.1}, this gives
$$TZ\le (P^3Z+P^{2+\eps}Z^{3/2})^{1/2} (P^{4+\Delta_8})^{1/2},$$
an estimate that disentangles to yield the bound
$$T^3Z\ll TP^{7+\Delta_8}+T^{-1}P^{12+2\Delta_8+\eps}.$$
On recalling that Lemma \ref{lemma2.1} permits us the assumption that $\Del_8\le 3/5$, 
we deduce by reference to (\ref{2.7}) that our alternative bound for $T^3Z$ produces an 
acceptable estimate for $I$ whenever 
$P^{27/10+2\tau_1}\le T\le P^{7/2-\Del_8-2\tau_1}$.   

The first two approaches just described handle overlapping ranges for $T$, leaving only the 
range $P^{7/2-\Del_8-2\tau_1}<T\le P^{3.185}$ to be addressed. Our final method 
eliminates these large values of $T$ from consideration. Again, we apply Schwarz's 
inequality to the right hand side of \eqref{2.8} to obtain
$$TZ\le J_1^{1/2}\biggl( \int_0^1|h(\alp)|^{10}\d\alp \biggr)^{1/2}.$$
By Lemma \ref{lemma2.2} and \eqref{2.1}, we infer that
$$TZ\ll (PZ+P^{1/2+\eps}Z^{3/2})^{1/2}(P^{6+\Delta_{10}})^{1/2}.$$
On recalling our hypothesis that $\Del_8+2\Del_{10}<1-4\tau_1$, this leads to the bound
$$T^3Z\ll TP^{7+\Delta_{10}}+T^{-1}P^{14-\Del_8-4\tau_1}.$$
By Lemma \ref{lemma2.1}, we may suppose that $\Del_{10}+3.185<3.4$, and thus in 
the final range $P^{7/2-\Del_8-2\tau_1}<T\le P^{3.185}$ we again obtain an acceptable 
upper bound for $I$. This completes the proof of the lemma. 
\end{proof}

\begin{theorem}\label{theorem2.4}
Suppose that $C_i$ and $D_i$ $(1\le i\le 3)$ are integers having the property that any two 
of the three linear forms $\mathrm M_i=C_i\alpha+D_i\beta$ $(1\le i\le 3)$ are linearly 
independent. Then
$$\int_0^1\!\!\int_0^1 |h(\mathrm M_1)h(\mathrm M_2)h(\mathrm M_3)|^6\,\mathrm 
d\alpha\,\mathrm d\beta \ll P^{21/2 -2\tau}.$$
\end{theorem}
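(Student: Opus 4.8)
The plan is to deduce Theorem \ref{theorem2.4} from Lemma \ref{lemma2.3} by a change of variables that reduces three pairwise-independent linear forms in two variables to the normalized shape $a\alpha$, $b\beta$, $c\alpha+d\beta$ appearing in $I(a,b,c,d)$. First I would observe that, since $\mathrm M_1$ and $\mathrm M_2$ are linearly independent, the linear map $(\alpha,\beta)\mapsto(\mathrm M_1,\mathrm M_2)$ is invertible over $\dbQ$; after clearing denominators it sends the unit square (tiled periodically, using that $h$ has period $1$ in its argument) to a bounded region, and the third form $\mathrm M_3$ becomes $c\gamma+d\delta$ for suitable nonzero integers $c,d$ — nonzero precisely because $\mathrm M_3$ is independent of each of $\mathrm M_1,\mathrm M_2$. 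The slight technical nuisance is that the substitution is only rational, not unimodular, so one does not get exactly the torus integral back; the standard fix is to use periodicity of $h(\cdot)^6$ together with the elementary fact that an integral of a nonnegative $1$-periodic function over any interval of length $L$ is $\ll (L+1)$ times its integral over a period, so the change of variables costs only a constant depending on the $C_i,D_i$ (hence absorbed into $\ll$).

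After that reduction, I would be looking at a bound of the form $\int\!\!\int |h(a\gamma)h(b\delta)h(c\gamma+d\delta)|^6\,\mathrm d\gamma\,\mathrm d\delta$ over a fixed box, which by periodicity is $\ll I(a',b',c',d')$ for integers $a',b',c',d'$ that are fixed nonzero multiples of the original data; Lemma \ref{lemma2.3} then gives $\ll P^{21/2-2\tau}$ directly, and the implicit constant's dependence on $a',b',c',d'$ is harmless since these are determined by $C_i,D_i$. So structurally the proof is: (i) normalize two of the forms by an invertible rational substitution; (ii) control the resulting non-unimodular change of variables via periodicity of $|h|^6$; (iii) quote Lemma \ref{lemma2.3}.

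Alternatively — and this may be cleaner to write — I would bypass an explicit substitution and instead redo the counting argument of Lemma \ref{lemma2.3} in the general setting. Writing $|h(\mathrm M_i)|^6=\sum_n \psi(n)e(\mathrm M_i n)$ with $\psi$ as in the proof of Lemma \ref{lemma2.3}, orthogonality in $\alpha$ and $\beta$ shows that the double integral counts triples $(n_1,n_2,n_3)$ with $\psi(n_1)\psi(n_2)\psi(n_3)$ weighting solutions of the pair of linear equations $C_1 n_1+C_2 n_2+C_3 n_3=0$, $D_1 n_1+D_2 n_2+D_3 n_3=0$. Pairwise independence of the $\mathrm M_i$ means this $2\times 3$ integer system has rank $2$, so its solution set is a rank-one lattice: each $n_i$ is a fixed rational multiple of a single parameter $t$, exactly as the constraints $an_1=cn_3$, $bn_2=dn_3$ produce in Lemma \ref{lemma2.3}. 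The inequality $\psi(n_1)\psi(n_2)\psi(n_3)\le \psi(n_1)^3+\psi(n_2)^3+\psi(n_3)^3$ then collapses the sum to $\ll \sum_n \psi(n)^3$, and from that point the three-range dyadic argument of Lemma \ref{lemma2.3} applies verbatim.

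The main obstacle, such as it is, is the bookkeeping around the non-unimodular change of variables: one must be careful that the rational substitution does not distort the $L^t$ norms of $h$ in a $P$-dependent way, and the clean way to see it does not is the periodicity argument above (or, in the second approach, simply never leaving the lattice-point count). Once that point is handled correctly, Theorem \ref{theorem2.4} is essentially a restatement of Lemma \ref{lemma2.3} with the hypothesis ``$a,b,c,d$ nonzero'' replaced by the equivalent geometric hypothesis ``the three forms are pairwise independent'', so I do not anticipate any genuinely new difficulty.
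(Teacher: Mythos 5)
Your second approach is precisely the paper's argument: expand $|h(\mathrm M_i)|^6$ via $\psi$ as in \eqref{2.3}, use orthogonality to reduce the double integral to the weighted count $\sum\psi(n_1)\psi(n_2)\psi(n_3)$ over solutions of $C_1n_1+C_2n_2+C_3n_3=D_1n_1+D_2n_2+D_3n_3=0$, observe that pairwise independence of the $\mathrm M_i$ is equivalent to all $2\times 2$ minors of the coefficient matrix being non-singular (so the constraint is equivalent to $an_1=cn_3$, $bn_2=dn_3$ with $a,b,c,d$ nonzero), and then invoke Lemma \ref{lemma2.3} via \eqref{2.4}. Your first, change-of-variables route is also workable but incurs exactly the non-unimodular bookkeeping you flag; the paper sidesteps it by never leaving the lattice-point count, which is what your second formulation does, so the proposal is correct and matches the paper's proof.
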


\begin{proof} By \eqref{2.3} and orthogonality, the integral in question is equal to
$$\sum \psi(n_1)\psi(n_2)\psi(n_3),$$
with the sum extended over all $n_i\in \mathbb Z$ $(1\le i\le 3)$ satisfying
$$ C_1n_1+C_2n_2+C_3n_3 =  D_1n_1+D_2n_2+D_3n_3 =0.$$
All $2\times 2$ minors of the coefficient matrix associated with this pair of linear equations 
are non-singular, so there is an equivalent system $an_1=cn_3$, $bn_2=dn_3$ with
non-zero integers $a,b,c,d$. The conclusion of the theorem now follows from Lemma 
\ref{lemma2.3} via \eqref{2.4}.\end{proof}    

\section{The circle method}
In this section we prepare the ground for a circle method approach to Theorems 
\ref{theorem1.1} and \ref{theorem1.2}. We shall assume throughout, as we may, that the 
system \eqref{1.1} is already in the form \eqref{1.6}. The linear forms 
$\Lam_j=\Lam_j(\alp,\bet)$ defined by (\ref{1.5}) can then be written also as
\begin{align}\label{3.2}
\begin{aligned}\Lambda_j = a_j\alpha\;\;&(1\le j\le n), \quad \Lambda_{n+j}=b_j\beta\;\;
(1\le j\le m), \\ 
&\Lambda_{n+m+j}=c_j\alpha+d_j\beta\;\;(1\le j\le l).
\end{aligned}
\end{align}
We fix real numbers $\eta_j\in (0,1]$ for $1\le j\le s$, form the generating function
\begin{equation}\label{3.3}
\mathscr F_\bfeta (\alpha,\beta) = \prod_{j=1}^sh(\Lambda_j;P,P^{\eta_j})
\end{equation}
and define
\begin{equation}\label{3.4}
\mathscr N_\bfeta  (P) = \int_0^1\!\!\int_0^1 \mathscr F_\bfeta (\alpha,\beta)
\,\mathrm d\alpha\,\mathrm d\beta.
\end{equation}
By orthogonality, it follows from the definition of $\mathscr N(P)$ that
\begin{equation}\label{3.5}
\mathscr N(P) \ge \mathscr N_\bfeta  (P).
\end{equation}
The specific choice of $\bfeta$ will depend on the coefficients in \eqref{1.1} and 
\eqref{1.6}. Here we analyse the contribution of the major arcs to the integral \eqref{3.4} 
for a generic choice of $\bfeta$. This is largely standard but the relatively low number of 
variables available to us in Theorem \ref{theorem1.2} calls for a brief account.\par 

We begin by defining the singular integral associated with the system of equations 
\eqref{1.1}. This features the integral
\begin{equation}\label{3.6}
v(\gamma) = \int_0^P e(\gamma\xi^4)\,\mathrm d\xi
\end{equation}
from which we build the generating function 
\begin{equation}\label{3.7}
V(\alpha,\beta) = \prod_{j=1}^s v(\Lambda_j),
\end{equation}
and, for $X\ge 1$, the truncated singular integral
$$\mathfrak I(X)=\int_{-XP^{-4}}^{XP^{-4}}\int_{-XP^{-4}}^{XP^{-4}}V(\alpha,\beta) 
\,\mathrm d\alpha\,\mathrm d\beta.$$

\begin{lemma}\label{lemma3.1}
Suppose that $q_0(\mathbf A,\mathbf B)\ge 9$. Then $V$ is integrable over 
$\mathbb R^2$ and
\begin{equation}\label{3.8}
\int_{-\infty}^\infty\int_{-\infty}^\infty|V(\alpha,\beta)|\,\mathrm d\alpha\,\mathrm d\beta 
\ll P^{s-8}. 
\end{equation}
Moreover, the limit
$$\mathfrak I=\lim_{X\to\infty}\mathfrak I(X)$$
exists, and whenever $X\ge 1$ one has
\begin{equation}\label{3.9}
\mathfrak I-\mathfrak I(X)\ll P^{s-8}X^{-1/4}.
\end{equation}
Finally, if the system \eqref{1.1} has a non-singular real solution, then 
$\mathfrak I\gg P^{s-8}$.
\end{lemma}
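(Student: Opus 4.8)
The plan is to deduce everything from the van der Corput bound $v(\gamma)\ll P\min(1,(P^4|\gamma|)^{-1/4})$ — obtained from the substitution $\xi=P\zeta$ followed by integration by parts in $\int_0^1 e(\gamma P^4\zeta^4)\,\mathrm d\zeta$ — together with the single two-dimensional estimate $\int_{\mathbb R^2}|V(\alpha,\beta)|\,\mathrm d\alpha\,\mathrm d\beta\ll P^{s-8}$, which at once yields the integrability of $V$, the bound \eqref{3.8}, and the existence of $\mathfrak I$. (Note that $q_0\ge 9$ entails $s\ge 10$, since the largest equivalence class has $n=s-q_0\ge 1$ members.) I would prove the two-dimensional estimate by passing to polar coordinates $(\alpha,\beta)=(\rho\cos\phi,\rho\sin\phi)$ and writing $\Lambda_j=\rho\,\ell_j(\phi)$ with $\ell_j(\phi)=\Lambda_j(\cos\phi,\sin\phi)$. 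After rescaling $\tau=P^4\rho$, the inner integral over $\rho$ at fixed $\phi$ becomes $P^{s-8}h(\phi)$ with $h(\phi)=\int_0^\infty\prod_{j=1}^s\min\bigl(1,(\tau|\ell_j(\phi)|)^{-1/4}\bigr)\,\tau\,\mathrm d\tau$, and it remains to show $\int_0^{2\pi}h(\phi)\,\mathrm d\phi\ll 1$. For $\phi$ outside the finitely many directions at which some $\ell_j$ vanishes one has $|\ell_j(\phi)|\asymp 1$ for all $j$, whence $h(\phi)\asymp 1$ (here $s>8$ is needed). Near such a direction $\phi^*$ exactly the forms in one equivalence class vanish — say $k$ of them, each at rate $\asymp\delta:=|\phi-\phi^*|$ — while the remaining $s-k$ forms keep $|\ell_j(\phi)|\asymp 1$; splitting the $\tau$-integral at the thresholds $\tau\asymp 1$ and $\tau\asymp\delta^{-1}$ gives $h(\phi)\ll 1+\delta^{(s-k)/4-2}$. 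This is where the hypothesis is used: since every equivalence class has at most $n=s-q_0$ members, $s-k\ge q_0\ge 9$, so the exponent is at least $\tfrac14$ and $h$ stays bounded near $\phi^*$. Summing over the finitely many such neighbourhoods yields $\int_0^{2\pi}h\ll 1$, hence \eqref{3.8}.

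With $V$ absolutely integrable, $\mathfrak I=\int_{\mathbb R^2}V$ exists and $\mathfrak I(X)\to\mathfrak I$. For the rate \eqref{3.9} I would note that $\mathfrak I-\mathfrak I(X)$ is the integral of $V$ over the union of the two strips $|\alpha|>XP^{-4}$ and $|\beta|>XP^{-4}$. On the first strip pick $j_0$ in the first equivalence class, so $\Lambda_{j_0}=a_1\alpha$ with $|\Lambda_{j_0}|\gg XP^{-4}$; then $|v(\Lambda_{j_0})|\ll P(P^4|\Lambda_{j_0}|)^{-1/4}\ll PX^{-1/4}$. Pulling out this factor and applying the bound just established to the remaining $s-1$ forms — which still satisfy its hypothesis, since deleting one form lowers $q_0$ by at most $1$ (leaving it $\ge 8$) and $s-1\ge 9$ — bounds the contribution of that strip by $PX^{-1/4}\cdot P^{s-9}=P^{s-8}X^{-1/4}$. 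The $\beta$-strip is handled identically with an index from the second equivalence class, which exists because $q_0\ge 9$ forces $t\ge 2$. This gives \eqref{3.9}.

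For the remaining assertion, write the pair \eqref{1.6} as $\Phi_1(\bfzet)=\Phi_2(\bfzet)=0$, with $\Phi_1,\Phi_2$ the two diagonal quartic forms there. The substitutions $\xi=P\zeta$ and $(\alpha,\beta)=(P^{-4}\theta_1,P^{-4}\theta_2)$ convert $\mathfrak I(X)$ into $P^{s-8}\int_{[0,1]^s}\prod_{i=1}^2\bigl(\int_{-X}^{X}e(\theta_i\Phi_i(\bfzet))\,\mathrm d\theta_i\bigr)\,\mathrm d\bfzet$, so $\mathfrak I=P^{s-8}\sigma_\infty$, where $\sigma_\infty$ is the real singular integral of the system. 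By homogeneity, and since replacing each coordinate of a real zero by its modulus preserves non-singularity, the hypothesis provides a non-singular real zero $\bfzet^*$ in the open cube $(0,1)^s$: if a coordinate of the rescaled, sign-adjusted solution still vanished, then — the local zero set being a smooth manifold by non-singularity — it could not lie in the corresponding coordinate hyperplane, because the gradients of $\Phi_1$ and $\Phi_2$ there have a vanishing entry in that slot; so one perturbs within the manifold to a non-singular zero with no zero coordinate. Localising near $\bfzet^*$, and using $\Phi_1,\Phi_2$ and $s-2$ of the $\zeta_j$ as coordinates, the inner $\theta_i$-integrals act, as $X\to\infty$, as Dirac masses concentrating on $\{\Phi_1=\Phi_2=0\}$; hence $\sigma_\infty$ picks up the strictly positive contribution obtained by integrating the reciprocal of the relevant non-vanishing Jacobian over a patch of the solution manifold. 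As the full density is non-negative, $\sigma_\infty$ is a positive constant depending only on the coefficients, and therefore $\mathfrak I\gg P^{s-8}$.

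The step I expect to be the main obstacle is this last one: making rigorous the passage to the Leray density and, above all, checking that the limit $\sigma_\infty$ exists and is non-negative, so that the positive local contribution cannot be cancelled elsewhere on the variety. This is the classical analysis of the singular integral (in the spirit of Davenport and Schmidt), which one would either invoke directly or reproduce via the standard smoothing argument; the decay estimate \eqref{3.9} already carries out part of this work. The polar-coordinate estimate of the first paragraph, by contrast, is routine bookkeeping once the role of $q_0$ has been isolated.
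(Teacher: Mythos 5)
Your proof is correct in its broad strokes but follows a genuinely different route from the paper, and has one small gap in the decay estimate.

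For the integrability bound \eqref{3.8}, the paper proceeds combinatorially rather than via polar coordinates: from $q_0\ge 9$ it extracts five \emph{disjoint} pairs of indices $\{2j-1,2j\}$ with $\Lambda_{2j-1}$ and $\Lambda_{2j}$ linearly independent (if only four such disjoint pairs existed, then $n\ge s-8$, hence $q_0\le 8$), applies the arithmetic--geometric mean inequality \eqref{3.10} with $r=5$ to get $V\ll P^{s-10}\sum_{j=1}^5|v(\Lambda_{2j-1})v(\Lambda_{2j})|^5$, and then observes that $\int_{\mathbb R}|v|^5\ll P$, so each summand contributes $P^{s-10}\cdot P^2$ after the linear change of variables $(\alpha,\beta)\mapsto(\Lambda_{2j-1},\Lambda_{2j})$. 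For \eqref{3.9} the same decomposition is used together with the observation that on $\mathscr B(X)$ one of $|\Lambda_{2j-1}|,|\Lambda_{2j}|$ exceeds a multiple of $XP^{-4}$. This avoids any angular analysis and reduces everything to a one-dimensional integral of $|v|^5$. Your polar-coordinate computation reaches the same conclusion and isolates nicely where $q_0$ enters, namely through the inequality $s-k\ge q_0$ for the size $k$ of each equivalence class; but note your asserted bound $h(\phi)\ll 1+\delta^{(s-k)/4-2}$ omits a $\log(1/\delta)$ term arising from the middle range $1\le\tau\le\delta^{-1}$ when $s-k=8$. This is harmless in proving \eqref{3.8} (there $s-k\ge q_0\ge 9$), but it surfaces in your proof of \eqref{3.9}: after deleting $\Lambda_{j_0}$ you have $s'=s-1$ forms with $q_0'\ge 8$ only, so $s'-k'=8$ can occur and the bound $\int_{\mathbb R^2}\prod_{j\ne j_0}|v(\Lambda_j)|\ll P^{s-9}$ does not follow from the \emph{hypothesis} of the result you established (which requires $q_0\ge 9$). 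It does hold --- the logarithmic singularity in $h'(\phi)$ is still integrable, and $s'\ge 9>8$ keeps the tail $\int_1^\infty\tau^{1-s'/4}\,\mathrm d\tau$ finite --- but this needs to be said rather than asserting the hypothesis is satisfied. The paper's route avoids this entirely, since pulling out a single large factor from a pair $\{\Lambda_{2j-1},\Lambda_{2j}\}$ leaves the other factor of the pair intact.

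For the positivity of $\mathfrak I$, the paper appeals to the argument of \cite[Lemma~13]{BW07} after noting that a non-singular real zero may be moved into $(0,1)^s$ (citing the discussion in \cite{JEMS1}); your sketch via the Leray density is the same classical argument in substance, and you are right that the rigorous version is the only part of the lemma that requires genuine care. Both your account and the paper's defer to standard references at this point.
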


\begin{proof} There are at least five disjoint subsets $\{i,j\}\subset\{1,2,\ldots,s\}$ having 
the property that the two linear forms $\Lambda_i$, $ \Lambda_j$ are linearly independent. 
To see this, let us suppose that at most four such pairs can be formed. Then at least $s-8$ 
of the forms $\Lambda_1,\ldots ,\Lambda_s$ lie in a one-dimensional space, showing that 
$n\ge s-8$. But $s=n+q_0$, so that $q_0\le 8$, which is not the case.\par

Based on this observation we temporarily (only within this proof) relabel the indices of the 
forms $\Lambda_1,\ldots ,\Lambda_s$ so as to arrange that for $1\le j\le 5$, the pairs 
$\Lambda_{2j-1}$, $\Lambda_{2j}$ are linearly independent. We use the familiar inequality
\begin{equation}\label{3.10}
|z_1z_2\cdots z_r|\ll |z_1|^r+\ldots +|z_r|^r
\end{equation}
with $r=5$ and find from \eqref{3.7} that
\begin{equation}\label{3.11}
V(\alpha,\beta)\ll P^{s-10}\sum_{j=1}^5|v(\Lambda_{2j-1})v(\Lambda_{2j})|^5. 
\end{equation}
By \eqref{3.6} and integration by parts, we have
\begin{equation}\label{3.12}
v(\gamma)\ll P(1+P^4|\gamma|)^{-1/4} . 
\end{equation}
This shows that $v^5$ is integrable with
\begin{equation}\label{3.13}
\int_{-\infty}^\infty |v(\gamma)|^5\,\mathrm d\gamma \ll P.
\end{equation}
The linear change of variable from $(\alpha,\beta)$ to $(\Lambda_{2j-1}, \Lambda_{2j})$ 
shows $|v(\Lambda_{2j-1})v(\Lambda_{2j})|^5$ to be integrable over $\mathbb R^2$.  
Since $V$ is continuous, its integrability follows from dominated convergence and 
\eqref{3.11}, while \eqref{3.13} in combination with (\ref{3.11}) implies \eqref{3.8}. The 
existence of the limit $\mathfrak I$ is now immediate.\par

Next, with
$$\mathscr B(X)=\{(\alpha,\beta)\in\mathbb R^2: \max \{|\alpha|,|\beta|\}\ge XP^{-4}\},
$$
we have
$$\mathfrak I-\mathfrak I(X)\ll \int_{\mathscr B(X)}|V(\alpha,\beta)|\,\mathrm d\alpha
\,\mathrm d\beta.$$
We apply \eqref{3.11} and observe that whenever $(\alpha,\beta)\in\mathscr B(X)$, one 
has
$$\max \{|\Lambda_{2j-1}|,|\Lambda_{2j}|\} \gg XP^{-4}.$$
Hence, by linear changes of variables as before, and with a suitable $C>0$ depending only 
on $\mathbf A$ and $\mathbf B$, we find that
$$\mathfrak I-\mathfrak I(X)\ll P^{s-10}\int_{-\infty}^\infty \int_{CXP^{-4}}^\infty 
|v(\gamma)v(\delta)|^5\,\mathrm d\gamma\,\mathrm d\delta,$$
and \eqref{3.9} follows from \eqref{3.12}.\par

Finally, in order to derive the lower bound for $\mathfrak I$, one first observes that if 
\eqref{1.1} has a non-singular real solution, then there is such a solution with all of its 
coordinates in the interval $(0,1)$ (see the discussion on page 2894 of \cite{JEMS1}). From 
here one may follow the argument used to prove \cite[Lemma 13]{BW07}, {\it mutatis 
mutandis}, so as to establish the final conclusion of the lemma.
\end{proof}

We now turn to the singular series. Its germ is the Gauss sum
$$S(q,a)=\sum_{r=1}^qe(ar^4/q)$$
allowing us to define the generating functions
$$T(q,a,b)=\prod_{j=1}^s S(q,\Lambda_j(a,b)),$$
$$U(q) = q^{-s}\underset{(a,b,q)=1}{\sum_{a=1}^q\sum_{b=1}^q}\,T(q,a,b)\quad 
\text{and}\quad U^\dagger (q) = q^{-s}
\underset{(a,b,q)=1}{\sum_{a=1}^q\sum_{b=1}^q}\, |T(q,a,b)|.$$

\begin{lemma}\label{lemma3.2}
Suppose that $s\ge 16$ and ${q_0}(\mathbf A,\mathbf B)\ge 12$. Then 
$U^\dagger (q)\ll q^{\varepsilon-2}$.
\end{lemma}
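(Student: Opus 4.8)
The plan is to bound $U^\dagger(q)$ by first establishing multiplicativity and then treating prime powers. One checks that $S(q,a)$ is multiplicative in $q$ in the usual way, so $T(q,a,b)$ and hence $U^\dagger(q)$ are multiplicative; it therefore suffices to prove $U^\dagger(p^k)\ll p^{k(\eps-2)}$ for every prime $p$ and every $k\ge 1$, with the implied constant uniform in $p$ (or at least summable). The workhorse is the classical Gauss sum estimate $S(p^k,a)\ll p^{k(1-1/4)}=p^{3k/4}$ when $p\nmid a$, valid for $p>2$ and with a harmless modification when $p\le 2$ or when $p\mid a$ but the relevant form does not vanish. More precisely, for a fixed pair $(a,b)$ with $(a,b,p)=1$, each linear form $\Lambda_j(a,b)$ is either divisible by $p$ or not; writing $p^{e_j}\|\Lambda_j(a,b)$, one has $|S(p^k,\Lambda_j(a,b))|\ll p^{\min(e_j,k)}\cdot p^{3(k-\min(e_j,k))/4}$, which for $e_j=0$ gives the saving $p^{-k/4}$ relative to the trivial bound $p^k$.

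The key point is to control how often many forms can be simultaneously divisible by $p$. Since $q_0\ge 12$, for any $(C,D)\ne(0,0)$ there are at least $12$ indices $j$ with $CA_j+DB_j\ne 0$; equivalently, for any fixed $(a,b)\not\equiv(0,0)\pmod p$, at most $s-12$ of the forms $\Lambda_j(a,b)$ can be $\equiv 0\pmod p$. So at least $12$ forms are coprime to $p$, each contributing a factor $\ll p^{3k/4}$, i.e. a saving of $p^{-k/4}$ each, for a total saving of at least $p^{-3k}$ beyond the trivial bound $p^{ks}$ on $|T(p^k,a,b)|$. Summing trivially over the $\le p^{2k}$ pairs $(a,b)$ and dividing by $p^{ks}$ yields $U^\dagger(p^k)\ll p^{2k}\cdot p^{ks-3k}\cdot p^{-ks}=p^{-k}$, which is not yet $p^{k(\eps-2)}$. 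To gain the extra factor one must exploit that the forms divisible by $p$ do not help but also that the count of $(a,b)$ for which the configuration of vanishing forms is ``bad'' is itself restricted: forms vanishing mod $p$ impose linear conditions, so the number of $(a,b)\bmod p^k$ for which a given set of $r$ inequivalent forms all vanish mod $p$ is at most $O(p^{k})$ (a line's worth), not $p^{2k}$. Combining a careful case analysis on the number of vanishing classes with the Gauss sum bounds — essentially the argument of \cite[Lemma 2.3 or its analogue]{JEMS1} adapted to the improved hypothesis $q_0\ge 12$ — delivers the exponent $-2$, with the $\eps$ absorbing divisor-type losses at the finitely many primes dividing the discriminants of the pencil.

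The main obstacle, and where the hypotheses $s\ge 16$ and $q_0\ge 12$ are genuinely used, is the case where $(a,b)\equiv(0,0)\pmod p$: then every $\Lambda_j(a,b)\equiv 0\pmod p$ and the naive bound gives nothing, so one must pass to $a=pa'$, $b=pb'$ and run a descent in $k$, reducing modulus $p^k$ to $p^{k-1}$ at the cost of the factor $p^{2}$ from the dilation of $(a',b')$ but a compensating gain once $(a',b')$ becomes primitive; tracking this recursion and checking it closes requires $q_0/4>2$ with room to spare for the $\eps$, i.e. $q_0\ge 12$, together with enough variables overall that the ``all forms divisible'' stratum is suitably thin — hence $s\ge 16$. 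I expect the bookkeeping of this descent across prime powers, uniformly in $p$, to be the delicate part; everything else is an orchestration of standard Gauss sum estimates against the combinatorial constraint supplied by $q_0$.
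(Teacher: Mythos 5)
Your opening moves are in the same spirit as the paper's argument: the Gauss sum bound (the paper uses Vaughan's $q^{-1}S(q,u)\ll q^{-1/4}(q,u)^{1/4}$, which is your estimate in $\gcd$ form), the observation that $q_0\ge 12$ forces at least twelve forms $\Lambda_j(a,b)$ to be coprime to $p$, and the recognition that the trivial count over $(a,b)$ only yields $p^{-k}$, so one must exploit that the vanishing conditions constrain $(a,b)$ to a thin set. The paper packages exactly this combinatorics by citing the argument on p.~890 of \cite{BW07} together with \cite[Lemma 11]{BW07}, arriving at
$$U^\dagger(q)\ll q^{2-s/4}\sum_{v_1\cdots v_t\mid\Delta q}v_1^{(r_1-4)/4}\cdots v_t^{(r_t-4)/4},$$
and then closes by noting $r_j\le r_1\le s-12$ and $s\ge 16$, so that the summand is $\le (v_1\cdots v_t)^{(s-16)/4}\le(\Delta q)^{(s-16)/4}$, whence $U^\dagger(q)\ll q^{2-s/4}\cdot q^{(s-16)/4+\eps}=q^{\eps-2}$.

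However, your final paragraph contains a genuine error and does not close the argument. You identify the ``main obstacle'' as the case $(a,b)\equiv(0,0)\pmod p$ and propose a descent $a=pa'$, $b=pb'$. This case simply does not arise: the sum defining $U^\dagger(p^k)$ runs over $(a,b,p^k)=1$, which for a prime power modulus is precisely the condition that $a$ and $b$ are not both divisible by $p$. There is no descent to run. The actual content you need is: for primitive $(a,b)\pmod p$, two \emph{inequivalent} forms $\Lambda_i,\Lambda_j$ cannot both vanish modulo $p$ unless $p$ divides the determinant $A_iB_j-A_jB_i$ (a bounded obstruction producing the factor $\Delta$), so essentially only one equivalence class of forms can accumulate a large common divisor with $p^k$, and that class has at most $r_1=n$ members. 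This is where $q_0\ge 12$ enters — it forces $r_1\le s-12$, hence the exponent $(r_1-4)/4\le(s-16)/4$ on the single problematic $v$ — and $s\ge 16$ then keeps that exponent nonnegative so the dominant term is $v=q$, giving exactly $q^{\eps-2}$. Your heuristic ``$q_0/4>2$'' (i.e. $q_0\ge 9$ or so) does not match the hypothesis $q_0\ge 12$, which is a symptom that the bookkeeping you sketch would not actually produce the exponent $-2$. As written, the proposal gestures at the right combinatorial constraint but misdiagnoses the key case and does not carry the exponent arithmetic through.
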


\begin{proof} From \cite[Theorem 4.2]{hlm} we deduce that
$$q^{-1}S(q,u)\ll q^{-1/4}(q,u)^{1/4},$$
so that
$$U^\dagger (q)\ll q^{-s/4}\underset{(a,b,q)=1}{\sum_{a=1}^q\sum_{b=1}^q}
\prod_{j=1}^s \big(q, \Lambda_j(a,b)\big)^{1/4}.$$
Recall the data $r_1,\ldots,r_t$ introduced in the preamble to Theorem \ref{theorem1.2}. 
Now following through the argument on page 890 of \cite{BW07} in combination with 
\cite[Lemma 11]{BW07} one finds that there is a number $\Delta $ depending only on the 
coefficients $\mathbf A$ and $\mathbf B$ such that 
$$U^\dagger(q)\ll q^{2-s/4}\sum_{\substack{v_1,\ldots ,v_t\\ v_1v_2\cdots v_t|\Del q}}
v_1^{(r_1-4)/4}\cdots v_t^{(r_t-4)/4}.$$
But $r_j\le r_1=n \le s-12$ $(1\le j\le t)$ and $s\ge 16$, and thus
$$U^\dagger (q)\ll q^{2-s/4}\sum_{\substack{v_1,\ldots ,v_t\\ v_1v_2\cdots v_t|\Del q}}
(v_1\cdots v_t)^{(s-16)/4}\ll q^{\eps-2}.$$
This completes the proof of the lemma. 
\end{proof}

The truncated singular series is defined by
$$\grS(X)=\sum_{1\le q\le X} U(q).$$
The preceding lemma has the following corollary.

\begin{lemma}\label{lemma3.3} Suppose that $s\ge 16$ and 
$q_0(\mathbf A,\mathbf B)\ge 12$. Then the series
$$\grS=\sum_{q=1}^\infty U(q)$$
converges absolutely, and $\grS(X)-\grS\ll X^{\varepsilon-1}$. Moreover, if, for each prime 
$p$, the system of equations \eqref{1.1} has a non-singular solution in $\mathbb Q_p$, 
then $\grS\gg 1$. 
\end{lemma}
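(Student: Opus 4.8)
The plan is to establish Lemma \ref{lemma3.3} by combining the single-modulus bound of Lemma \ref{lemma3.2} with the standard multiplicativity structure of the singular series, and then deriving the positivity $\grS \gg 1$ from the assumed $p$-adic solubility via a product formula over local densities. First I would record that the functions $q \mapsto T(q,a,b)$ and hence $q\mapsto U(q)$ enjoy a multiplicativity property: the Gauss sums $S(q,\cdot)$ are multiplicative in $q$, so $U(q)$ is a multiplicative function of $q$, and one may write $\grS = \prod_p \chi_p$ with $\chi_p = \sum_{h\ge 0} U(p^h)$. Absolute convergence of $\sum_q U(q)$ then follows immediately from Lemma \ref{lemma3.2}, since $U^\dagger(q) \ll q^{\eps-2}$ gives $\sum_q |U(q)| \le \sum_q U^\dagger(q) \ll \sum_q q^{\eps-2} < \infty$; the same bound yields the tail estimate $\grS(X) - \grS \ll \sum_{q>X} q^{\eps-2} \ll X^{\eps-1}$ by comparison with an integral. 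This part is routine and should occupy only a few lines.

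The substantive point is the lower bound $\grS \gg 1$. Here I would argue as follows. First, the bound $U^\dagger(p^h) \ll p^{h(\eps-2)}$ from Lemma \ref{lemma3.2}, applied with $X=p^h$, shows that the factors $\chi_p$ satisfy $\chi_p = 1 + O(p^{-3/2})$ for all primes $p$, uniformly, once $p$ exceeds a constant depending only on $\mathbf A$ and $\mathbf B$; indeed $|\chi_p - 1| \le \sum_{h\ge 1} U^\dagger(p^h) \ll p^{-3/2}$. Consequently the Euler product $\prod_p \chi_p$ converges, and it suffices to show that each individual factor $\chi_p$ is strictly positive. For this one uses the classical interpretation of $\chi_p$ as a $p$-adic density: standard manipulations (Hensel's lemma for the trivial modulus, together with the identity relating $\sum_{h\le H} U(p^h)$ to the normalised count of solutions of \eqref{1.1} modulo $p^H$) give $\chi_p = \lim_{H\to\infty} p^{-H(s-2)} M_p(H)$, where $M_p(H)$ denotes the number of solutions of \eqref{1.1} in $(\dbZ/p^H\dbZ)^s$. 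The hypothesis that \eqref{1.1} has a non-singular solution in $\dbQ_p$ then forces $M_p(H) \gg_p p^{H(s-2)}$ for all $H$ (a nonsingular $p$-adic point can be lifted and its neighbourhood contributes the full expected density), whence $\chi_p > 0$. Combining this with the uniform estimate $\chi_p = 1 + O(p^{-3/2})$ for large $p$, only finitely many factors can be small, and none vanishes, so $\grS = \prod_p \chi_p \gg 1$.

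For the execution I would lean on the literature rather than reprove the local-global dictionary from scratch: the passage from $p$-adic nonsingular solubility to positivity of the local factor is carried out in detail in the references already cited in the paper (in particular the treatment of the singular series in \cite{BW07} and \cite{JEMS1}), and I would simply adapt that argument \emph{mutatis mutandis}, noting that the rank hypothesis $q_0 \ge 12$ with $s \ge 16$ is exactly what Lemma \ref{lemma3.2} needs and is more than enough to make the Euler factors behave. The main obstacle, such as it is, is bookkeeping: one must be careful that the condition $(a,b,q)=1$ in the definition of $U(q)$ is compatible with the multiplicative decomposition (it is, since the coprimality condition factors over prime powers), and that the truncation error $\grS(X)-\grS$ is controlled by $U^\dagger$ and not merely $U$, so that the cancellation-free bound of Lemma \ref{lemma3.2} applies directly. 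None of this requires new ideas; the only genuinely delicate input, the inequality $\Del_8 + 2\Del_{10} < 1$ that made Lemma \ref{lemma2.3} possible, plays no role here, and the proof is comparatively soft.
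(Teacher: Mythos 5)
Your proposal is correct and essentially matches the paper's approach: the paper observes that the convergence and tail estimate are immediate from Lemma \ref{lemma3.2}, and for the positivity $\grS\gg 1$ it simply defers to the proof of \cite[Lemma 12]{BW07}. The multiplicativity, Euler-product decomposition $\grS=\prod_p\chi_p$, uniform bound $\chi_p=1+O(p^{\eps-2})$ from $U^\dagger$, and Hensel-lifting of a non-singular $\dbQ_p$-point to show each $\chi_p>0$ that you spell out are exactly what that reference carries out.
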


\begin{proof} Only the last statement requires justification, for which we may follow 
through the argument of the proof of \cite[Lemma 12]{BW07} with obvious adjustments.
\end{proof}

Now is the time to define the major arcs in our Hardy-Littlewood dissection. We suppose 
now that $0<\tau<10^{-4}$ and put $Q= (\log P)^\tau$. We require dissections of 
dimension one and two. First, when $a\in \dbZ$ and $q\in \dbN$ we define
$$\grM(q,a)=\{\alp\in [0,1] :|\alp-a/q|\le QP^{-4}\}.$$
We then take $\grM$ to be the union of the intervals $\grM(q,a)$ with $0\le a\le q\le Q$ 
and $(q,a)=1$, and we put $\grm=[0,1]\setminus \grM$. Second, when $a,b\in \dbZ$ and 
$q\in \dbN$, we define
$$\grN(q,a,b)=\{(\alp,\bet)\in [0,1)^2:\text{$|\alp-a/q|\le QP^{-4}$ and 
$|\bet-b/q|\le QP^{-4}$}\}.$$
We take $\grN$ to be the union of the rectangles $\grN(q,a,b)$ with $0\le a,b\le q\le Q$ 
and $(q,a,b)=1$, and we put $\grn=[0,1)^2\setminus \grN$. 

\begin{lemma}\label{lemma3.4}
Suppose that $s\ge 16$ and $q_0(\mathbf A,\mathbf B)\ge 12$. Then there exists a number 
$\rho>0$ such that both
$$\iint_\grN \mathscr F_\bfeta(\alpha,\beta) \,\mathrm d\alpha\,\mathrm d\beta 
=\rho\grS\mathfrak I+O(P^{s-8}(\log P)^{-\tau/4})$$
and
$$\int_\grM \int_\grM \mathscr F_\bfeta(\alpha,\beta) \,\mathrm d\alpha\,\mathrm d\beta 
=\rho\grS\mathfrak I+O(P^{s-8}(\log P)^{-\tau/4}).$$
Moreover, if $\eta_j=1$ for $1\le j\le s$, then $\rho=1$.
\end{lemma}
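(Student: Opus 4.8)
The plan is to carry out a standard two-dimensional major-arc analysis, the one point of real care being that the modest number of variables forces the cross-term estimates to be performed with some thought rather than quoted wholesale. The essential input is a pointwise major-arc approximation for the smooth Weyl sum: for $(\alpha,\beta)\in\grN(q,a,b)$, on writing $\alpha=a/q+\beta_1$, $\beta=b/q+\beta_2$ and using the linearity of the forms $\Lambda_j$, one has an approximation of the shape
$$h(\Lambda_j(\alpha,\beta);P,P^{\eta_j})=\rho_j\,q^{-1}S(q,\Lambda_j(a,b))\,v(\Lambda_j(\beta_1,\beta_2))+O(P^{1-\delta_0})\qquad(1\le j\le s),$$
valid for some fixed $\delta_0>0$, in which $\rho_j\in(0,1]$ is the limiting density in $[1,P]$ of the smooth numbers $\calA(P,P^{\eta_j})$; here the fact that smooth numbers equidistribute suitably among the residue classes to moduli $q\le Q=(\log P)^\tau$ is what reduces the relevant arithmetic to the classical biquadratic Gauss sum $S(q,\cdot)$. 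Such an approximation is available from earlier work on diagonal biquadratic forms in smooth variables. We set $\rho=\prod_{j=1}^s\rho_j$, and note that $\rho=1$ when $\eta_j=1$ for all $j$, since the density of $\calA(P,P)$ in $[1,P]$ is $1$; this yields the final assertion of the lemma.

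First I would substitute this approximation into $\mathscr F_\bfeta=\prod_{j=1}^sh(\Lambda_j;P,P^{\eta_j})$ and integrate over $\grN$, replacing the $s$ factors one at a time. The leading term is
$$\rho\sum_{1\le q\le Q}q^{-s}\underset{(a,b,q)=1}{\sum_{a=1}^q\sum_{b=1}^q}T(q,a,b)\iint_{|\beta_1|,|\beta_2|\le QP^{-4}}\prod_{j=1}^sv(\Lambda_j(\beta_1,\beta_2))\,\mathrm d\beta_1\,\mathrm d\beta_2=\rho\,\grS(Q)\,\mathfrak I(Q),$$
the inner integral being $\mathfrak I(Q)$ (the substitution having unit Jacobian) and the arithmetic sum being $\grS(Q)$. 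Every other term in the expansion replaces at least one of the $s$ factors above by the error term $O(P^{1-\delta_0})$, in place of a factor contributing a genuine power of $P$ on average; estimating the product of the remaining factors by Hölder's inequality together with the mean values of Lemma \ref{lemma2.1} and the bound $v(\gamma)\ll P(1+P^4|\gamma|)^{-1/4}$, and controlling the attendant sums over $q$, $a$ and $b$ as in the proof of Lemma \ref{lemma3.2}, one finds that the total contribution of these terms is $O(P^{s-8-\delta})$ for some $\delta>0$. It is here that the hypotheses $s\ge16$ and $q_0\ge12$ are needed — via Lemma \ref{lemma3.2} and the abundance of linearly independent pairs of forms $\Lambda_j$ used in the proof of Lemma \ref{lemma3.1} — since with fewer variables the available mean values would be too weak. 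Finally, Lemma \ref{lemma3.3} gives $\grS(Q)=\grS+O(Q^{\eps-1})$ with $\grS\ll1$, and Lemma \ref{lemma3.1} gives $\mathfrak I(Q)=\mathfrak I+O(P^{s-8}Q^{-1/4})$ with $\mathfrak I\ll P^{s-8}$, so that $\rho\,\grS(Q)\,\mathfrak I(Q)=\rho\,\grS\,\mathfrak I+O(P^{s-8}Q^{-1/4})$. As $Q=(\log P)^\tau$, this error dominates the earlier ones and is exactly the error term claimed; this proves the formula over $\grN$.

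For the variant over $\grM\times\grM$ one applies the same argument on $(\grM\times\grM)\setminus\grN=\bigcup_{q\ne q'}\grM(q,a)\times\grM(q',b)$, where the two prunings yield distinct denominators $q,q'$: the major-arc approximation then produces, for the forms $a_j\alpha$ and $b_j\beta$, Gauss sums to moduli $q$ and $q'$ respectively, and for the forms $c_j\alpha+d_j\beta$ a Gauss sum to modulus $\mathrm{lcm}(q,q')$, so that summing over $a\bmod q$ and $b\bmod q'$ leaves a decoupled double sum over moduli. A standard reconciliation, resting on the multiplicativity of the $p$-adic local factors of the singular series — which are insensitive to whether a common denominator is imposed — shows that the sum of this extra contribution and the $\grN$-contribution is once more $\rho\,\grS\,\mathfrak I$, with the same error term. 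The step I expect to be the main obstacle is the cross-term bookkeeping in the second paragraph: with as few as $s=16$ variables one cannot be wasteful, and must distribute the sixth, eighth and tenth moment estimates of Lemma \ref{lemma2.1} across the right clusters of variables and apply Hölder over genuinely two-dimensional boxes, just as in the proof of Lemma \ref{lemma3.1}, so that every cross term really does save a power of $P$ over $P^{s-8}$.
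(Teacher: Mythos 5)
Your treatment of the integral over $\grN$ matches the paper's in essentials: approximate $\mathscr F_\bfeta$ on a box $\grN(q,a,b)$ by $\rho q^{-s}T(q,a,b)V(\alpha-a/q,\beta-b/q)$ (the paper invokes \cite[Lemma 8.5]{Wsim2} repeatedly, absorbing all the $\rho_j$ into one constant $\rho$), integrate, and then complete $\grS(Q)\mathfrak I(Q)$ to $\grS\mathfrak I$ via Lemmas \ref{lemma3.1} and \ref{lemma3.3}. However, you over-engineer the error analysis. There is no need for H\"older's inequality or the mean values of Lemma \ref{lemma2.1} here: the paper's approximation carries a single additive error $O(P^s(\log P)^{-1/2})$, the set $\grN$ has measure $O(Q^6P^{-8})$, and since $Q=(\log P)^\tau$ with $\tau<10^{-4}$ one gets $O(P^{s-8}(\log P)^{-1/4})$ immediately. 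The hypotheses $s\ge16$ and $q_0\ge12$ are not needed for this error bookkeeping; they enter only through Lemmas \ref{lemma3.1}--\ref{lemma3.3}, which supply integrability of $V$, the tail bound for $\grS$, and the decay $U^\dagger(q)\ll q^{\eps-2}$.

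Your approach to $\int_\grM\int_\grM$ is genuinely different from the paper's and is where the real gap lies. The paper's device is a sandwich: defining $\grN^*$ as the union of the boxes $\grN(q,a,b)$ with $q\le Q^2$ rather than $q\le Q$, one has $\grN\subset\grM\times\grM\subset\grN^*$ (any point of $\grM(q_1,a)\times\grM(q_2,b)$ lies in a box $\grN(q,a',b')$ with $q\mid q_1q_2\le Q^2$), and the discrepancy is trapped in $\grN^*\setminus\grN$, on which the same single-modulus approximation applies and Lemma \ref{lemma3.2} gives $\sum_{Q<q\le Q^2}U^\dagger(q)\ll Q^{\eps-1}$. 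No recomputation of the main term on $\grM\times\grM$ is needed at all: the extra region is estimated in absolute value and absorbed into the error. Your proposal instead tries to re-derive the main term directly on $\grM\times\grM$ via a decoupled double sum over moduli $q,q'$ and a ``standard reconciliation, resting on multiplicativity of the $p$-adic local factors.'' This can presumably be made to work, but as written it has two defects. First, the set-theoretic claim $(\grM\times\grM)\setminus\grN=\bigcup_{q\ne q'}\grM(q,a)\times\grM(q',b)$ is false: for example $\grM(3,1)\times\grM(2,1)=\grN(6,2,3)$ is inside $\grN$ even though $q\ne q'$. Second, the reconciliation step — passing from the double sum over $(q,a,q',b)$ with $(a,q)=(b,q')=1$ to the singular series $\grS$ indexed by $(q,a,b)$ with $(a,b,q)=1$, while simultaneously controlling the boundary error coming from the change in the range of integration in $(\beta_1,\beta_2)$ when $q\ne q'$ — is exactly the content that needs proof, and ``multiplicativity'' alone does not supply the quantitative tail bound. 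The sandwich argument is both shorter and avoids this issue entirely, precisely because it never touches the main term on the larger region.
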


\begin{proof} We begin by establishing the first asymptotic relation asserted in the 
statement. Let $(\alpha,\beta)\in \grN(q,a,b)$ and $q\le (\log P)^{1/10}$. Repeated use of 
\cite[Lemma 8.5]{Wsim2} shows that there is a number $\rho>0$ with
\begin{equation}\label{3.14}
\mathscr F_\bfeta(\alpha,\beta)=\rho q^{-s}T(q,a,b)V(\alpha-a/q,\beta-b/q)+
O(P^s(\log P)^{-1/2}), 
\end{equation}
while \cite[Theorem 4.1]{hlm} shows that $\rho=1$ in case we have $\eta_j=1$ for all $j$.
Integrating over $(\alp,\bet)\in \grN$ now delivers the asymptotic relation
$$\iint_\grN \mathscr F_\bfeta(\alpha,\beta)\,\mathrm d\alpha\,\mathrm d\beta 
=\rho\grS(Q)\mathfrak I(Q)+O(P^{s-8}(\log P)^{-1/4}),$$
and the desired conclusion follows by reference to Lemmata \ref{lemma3.1} and 
\ref{lemma3.3}.\par

Turning our attention now to the second asymptotic relation asserted in the statement, let 
$\grN^*$ denote the union of the rectangles $\grN(q,a,b)$ with
$$0\le a,b\le q,\quad 1\le q\le Q^2\quad \text{and}\quad (a,b,q)=1.$$
Then $\grN\subset \grM\times \grM \subset \grN^*$ and $\grN^*$ has measure 
$O(Q^8P^{-8})$. By \eqref{3.14}, we have
$$|\mathscr F_\bfeta(\alpha,\beta)|\ll \rho q^{-s}|T(q,a,b)V(\alpha-a/q,\beta-b/q)|+
P^s(\log P)^{-1/2}.$$
Thus, by integrating over $(\alp,\beta)\in \grN^*\setminus \grN$ and applying the bound 
\eqref{3.8} of Lemma \ref{lemma3.1} together with Lemma \ref{lemma3.2}, we obtain
$$\iint_{\grN^*\setminus \grN} |\mathscr F_\bfeta(\alpha,\beta)|\,\mathrm d\alpha
\,\mathrm d\beta\ll P^{s-8}\biggl(\sum_{Q<q\le Q^2}U^\dagger(q)+(\log P)^{-1/4}\biggr) 
\ll P^{s-8}Q^{\varepsilon-1}.$$
Since
$$\int_\grM\int_\grM \mathscr F_\bfeta(\alpha,\beta) \,\mathrm d\alpha\,\mathrm d\beta 
-\iint_\grN \mathscr F_\bfeta(\alpha,\beta) \,\mathrm d\alpha\,\mathrm d\beta \ll 
\iint_{\grN^*\setminus \grN} |\mathscr F_\bfeta(\alpha,\beta)|\,\mathrm d\alpha
\,\mathrm d\beta ,$$
the second asymptotic relation asserted in the statement now follows from the first.
\end{proof}

We close this section with a related auxiliary estimate for use in the next section.

\begin{lemma}\label{lemma3.5}
Suppose that $b$ is a non-zero integer and $0<\eta\le 1$. Then
$$\int_{\mathfrak M}|h(b\alpha;P,P^\eta)|^6\,\mathrm d\alpha\ll P^2.$$
\end{lemma}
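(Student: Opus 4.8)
The plan is to carry out the routine major arc analysis of a sixth moment, the essential input being the absolute convergence of the singular series attached to $x_1^4+x_2^4+x_3^4=y_1^4+y_2^4+y_3^4$. First I would split $\grM$ into the arcs $\grM(q,a)$ with $1\le q\le Q$ and $(a,q)=1$, and on $\grM(q,a)$ write $\alp=a/q+\bet$ with $|\bet|\le QP^{-4}$. Since $b$ is fixed, one has $|b\alp-a'/q'|\le|b|QP^{-4}$, where $a'/q'$ is the reduced fraction congruent to $ba/q$ modulo $1$; thus $q'=q/(q,b)\le q\le Q$ and $(a',q')=1$, and both $q'$ and the displacement $b\bet$ are of admissible size (bounded by $(\log P)^{2\tau}\le(\log P)^{1/10}$, say), so that the standard major arc approximation for biquadratic smooth Weyl sums — the one underlying \eqref{3.14}; cf. \cite[Lemma 8.5]{Wsim2} and \cite[Theorem 4.1]{hlm} — applies and supplies a positive constant $\rho$ with
$$h(b\alp;P,P^\eta)=\rho(q')^{-1}S(q',a')v(b\bet)+O\bigl(P(\log P)^{-1/2}\bigr)\qquad(\alp\in\grM(q,a)).$$
Inserting this into $|h(b\alp)|^6\ll(q')^{-6}|S(q',a')|^6|v(b\bet)|^6+P^6(\log P)^{-3}$ and integrating over $\grM$, the error term contributes at most $P^6(\log P)^{-3}$ times the measure $\ll Q^3P^{-4}$ of $\grM$, hence is $\ll P^2(\log P)^{3\tau-3}=o(P^2)$.

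For the main term, the estimate $v(\gam)\ll P(1+P^4|\gam|)^{-1/4}$ from \eqref{3.12} gives $|v(b\bet)|^6\ll P^6(1+P^4|b\bet|)^{-3/2}$, whence a change of variable yields $\int_{|\bet|\le QP^{-4}}|v(b\bet)|^6\,\mathrm d\bet\ll_b P^2$. For the sum over the arcs, a short count of multiplicities — each value of $q'$ arises from at most $d(b)$ moduli $q\le Q$, and $\phi(q)/\phi(q')\ll_b 1$ since $q/q'\mid b$, while $a\mapsto a'$ covers each reduced residue class modulo $q'$ equally often — shows that $\sum_{q\le Q}\sum_{(a,q)=1}(q')^{-6}|S(q',a')|^6\ll_b\sum_{q\ge1}q^{-6}\sum_{(a,q)=1}|S(q,a)|^6$. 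Putting this together,
$$\int_{\grM}|h(b\alp;P,P^\eta)|^6\,\mathrm d\alp\ \ll_b\ P^2\sum_{q=1}^{\infty}q^{-6}\underset{(a,q)=1}{\sum_{a=1}^{q}}|S(q,a)|^6+o(P^2),$$
so that everything comes down to bounding the displayed series.

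That series is precisely the singular series of $x_1^4+x_2^4+x_3^4=y_1^4+y_2^4+y_3^4$, and its absolute convergence is classical: the summand $q\mapsto q^{-6}\sum_{(a,q)=1}|S(q,a)|^6$ is multiplicative, and one checks that its total local contribution at each prime $p$ is $1+O(p^{-2})$, using at $p$ itself the square-root cancellation $|S(p,a)|\ll p^{1/2}$ for $(a,p)=1$ — equivalently the count $M_6(p)=p^5+O(p^3)$ of solutions of the congruence modulo $p$ — and at the higher powers $p^\l$ the explicit evaluation of $S(p^\l,a)$, supplemented for $\l\ge 4$ by the bound $q^{-1}S(q,a)\ll q^{-1/4}(q,a)^{1/4}$ of \cite[Theorem 4.2]{hlm}; the Euler product $\prod_p(1+O(p^{-2}))$ then converges. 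I expect this convergence to be the only delicate point: the naive Weyl-type bound $|S(q,a)|\ll q^{3/4}$ alone is too weak here, as it would only give $\prod_p(1+O(p^{-1/2}))$, which diverges, so the square-root cancellation for prime moduli is genuinely needed. With the series shown to be finite, the preceding display yields $\int_{\grM}|h(b\alp;P,P^\eta)|^6\,\mathrm d\alp\ll P^2$, as required.
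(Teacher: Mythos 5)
Your proof follows essentially the same route as the paper: apply the smooth major-arc approximation of \cite[Lemma 8.5]{Wsim2} on each arc $\grM(q,a)$, integrate the resulting $v$-factor using \eqref{3.12} to obtain the $P^2$, and reduce the arc-sum to the absolutely convergent singular series $\sum_q q^{-6}\sum_{(a,q)=1}|S(q,a)|^6$. The only difference is cosmetic bookkeeping: the paper leaves the Gauss sum as $S(q,ab)$ and cites \cite[Lemma 4.9]{hlm} together with \cite[Lemma 5.1]{V89} for the $O(1)$ bound, whereas you pass to the reduced fraction $a'/q'$ and sketch the multiplicativity and square-root cancellation argument those references encode; both are standard and yield the same conclusion.
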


\begin{proof} We abbreviate $h(\alpha;P,P^\eta)$ to $h(\alpha)$. Let $\alpha\in\grM(q,a)$ 
with $q\le Q$. Then, from \cite[Lemma 8.5]{Wsim2} one finds that there is number 
$\theta>0$ with
$$|h(b\alpha)|^6=\theta q^{-6}|S(q,ab)v(b\alpha-ab/q)|^6+O(P^6(\log P)^{-1/4}).$$
Since $\grM$ has measure $O(Q^3P^{-4})$, one sees that
$$\int_{\mathfrak M}|h(b\alpha)|^6\,\mathrm d\alpha \ll \sum_{q\le Q} 
\sum_{\substack{a=1 \\ (a,q)=1}}^q |S(q,ab)|^6\int_{-\infty}^\infty |v(\gamma)|^6
\,\mathrm d\gamma +P^2(\log P)^{-1/8}.$$
By \eqref{3.12}, the integral on the right hand side here is $O(P^2)$. Meanwhile, the 
remaining sum is readily estimated by an obvious adjustment in the proof of 
\cite[Lemma 4.9]{hlm}, as given for example in the proof of \cite[Lemma 5.1]{V89}. In this 
way, one finds that the sum on the right hand side is $O(1)$, and the conclusion of the 
lemma follows.
\end{proof}

\section{The proof of Theorem \ref{theorem1.2}}
In this section we choose $\eta$ in accordance with \eqref{2.1} and Lemma 
\ref{lemma2.1}, and we take $\eta_j=\eta$ for all $1\le j\le s$ in the generating function 
\eqref{3.3}. We consider a system of the shape (\ref{1.1}), and suppose that it is given in 
the form \eqref{1.6} in line with the hypotheses of Theorem \ref{theorem1.2}. In particular, 
we suppose that this system has non-singular solutions in each completion of the rational 
numbers.\par 

We continue to abbreviate $h(\alpha;P,P^\eta)$ to $h(\alpha)$ and put
$$F_1(\alpha)=h(a_1\alpha)\cdots h(a_6\alpha),\quad F_2(\beta)=h(b_1\beta)\cdots 
h(b_6\beta), $$
$$H(\alpha,\beta)=\prod_{i=7}^n h(a_i\alpha)\prod_{j=7}^m h(b_j\beta)
\prod_{k=1}^l h(c_k\alpha+d_k\beta).$$
Note that $H(\alp,\bet)$ is a product of $s-12$ Weyl sums, and that
\begin{equation}\label{4.1}
\mathscr F_\bfeta (\alpha,\beta)=F_1(\alpha)F_2(\beta)H(\alpha,\beta).
\end{equation}
Further, our assumptions currently in play allow us to combine the conclusions of Lemmata
\ref{lemma3.1}, \ref{lemma3.3} and \ref{lemma3.4} so as to infer that
\begin{equation}\label{4.2}
\int_\grM \int_\grM \mathscr F_\bfeta(\alpha,\beta)\,\mathrm d\alpha\,\mathrm d\beta \gg 
P^{s-8}.
\end{equation}

The argument that we now explain is based on the unorthodox treatment of certain cubic 
forms in our memoir \cite{BWCrelle}. Section 3 of that work serves as a blueprint, but in the 
current context the arithmetic mollifier is more radical than its precedessor so as to cope 
with smooth variables more easily.\par

Let $u$ and $v$ be integers, and let $\varrho(u,v)$ denote the number of solutions of the 
system of equations
\begin{align}
\sum_{i=7}^n a_ix_i^4+\sum_{k=1}^l c_kz_k^4&=u,\label{4.3}\\
\sum_{j=7}^m b_jy_j^4+\sum_{k=1}^l d_kz_k^4&=v,\label{4.4}
\end{align}
in integers
\begin{equation}\label{4.5}
x_i,y_j,z_k\in\mathscr A(P,P^\eta)\quad (7\le i\le n,\, 7\le j\le m,\, 1\le k\le l).
\end{equation}
Further, let $\varrho_1(u)$ denote the number of solutions of \eqref{4.3} satisfying 
\eqref{4.5}, and let $\varrho_2(v)$ denote the number of solutions of \eqref{4.4} satisfying 
\eqref{4.5}. Then
\begin{equation}\label{4.6}
\varrho_1(u)=\sum_{v\in\mathbb Z}\varrho(u,v)\quad \text{and}\quad 
\varrho_2(v)=\sum_{u\in\mathbb Z}\varrho(u,v).
\end{equation}
Note that $\varrho(u,v)=0$ unless $u$ and $v$ simultaneously satisfy the inequalities 
$|u|\le CP^4$, $|v|\le CP^4$, with $C=C(\bfa,\bfb,\bfc,\bfd)$ a sufficiently large constant.

\par Now let $L=\log\log P$ and $M=P^{s-16}L$. We partition $\dbZ^2$ into the three sets
\begin{align*}
\mathfrak X&=\{(u,v)\in\mathbb Z^2:\text{$\varrho_1(u)\le M$ and $\varrho_2(v)\le M$}
\},\\
\mathfrak Y_1&=\{(u,v)\in\mathbb Z^2:\text{$\varrho_1(u)> M$ and $\varrho_2(v)\le M$}
\},\\
\mathfrak Y_2&=\{(u,v)\in\mathbb Z^2:\varrho_2(v)> M\}.
\end{align*}
The next lemma shows that the sets $\mathfrak Y_1$ and $\mathfrak Y_2$ are slim.

\begin{lemma}\label{lemma4.1}
For $i=1$ and $2$ one has
$$\sum_{(u,v)\in\mathfrak Y_i}\varrho(u,v)\ll P^{s-12}L^{-1}.$$
\end{lemma}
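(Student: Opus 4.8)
The plan is to treat the two cases symmetrically and reduce the bound for $\sum_{(u,v)\in\grY_i}\varrho(u,v)$ to a one-dimensional count governed by the large values of $\varrho_i$. Consider first $i=2$. By the definition of $\grY_2$ and the second relation in \eqref{4.6}, we have
$$\sum_{(u,v)\in\grY_2}\varrho(u,v)=\sum_{\substack{v\in\dbZ\\ \varrho_2(v)>M}}\sum_{u\in\dbZ}\varrho(u,v)=\sum_{\substack{v\in\dbZ\\ \varrho_2(v)>M}}\varrho_2(v).$$
So the entire problem is to show that the contribution of those $v$ with $\varrho_2(v)>M=P^{s-16}L$ to the total count $\sum_v\varrho_2(v)$ is $O(P^{s-12}L^{-1})$. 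The same computation, using the first relation in \eqref{4.6}, reduces the $i=1$ case to showing that $\sum_{u:\,\varrho_1(u)>M}\varrho_1(u)\ll P^{s-12}L^{-1}$; here one needs $\varrho_1(u)\le M$ to fail while $\varrho_2(v)\le M$ holds, but since $\varrho(u,v)\le\varrho_1(u)$ trivially, discarding the constraint on $v$ only enlarges the sum, so it suffices to bound $\sum_{u:\,\varrho_1(u)>M}\varrho_1(u)$.

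Next I would estimate these one-dimensional sums by a dyadic/moment argument. For the $v$-sum: writing $h(\beta)=h(\beta;P,P^\eta)$ as before, the generating function for $\varrho_2$ is $\prod_{j=7}^m h(b_j\beta)\prod_{k=1}^l h(d_k\beta)$, a product of $(m-6)+l$ smooth biquadratic Weyl sums, so by orthogonality and Cauchy--Schwarz (or directly via \eqref{2.1}, since $(m-6)+l\ge m+l-6\ge 6$) one has, for each $v$,
$$\varrho_2(v)\le\int_0^1\Bigl|\prod_{j=7}^m h(b_j\beta)\prod_{k=1}^l h(d_k\beta)\Bigr|\,\d\beta,$$
and more usefully the \emph{total} second moment $\sum_v\varrho_2(v)^2$ is an exponential-sum integral over $2((m-6)+l)\ge 12$ smooth Weyl sums of shape $\prod h(e\beta)$, hence by \eqref{3.10} and \eqref{2.1} it is $\ll P^{2((m-6)+l)-4+\eps}$. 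Meanwhile $\sum_v\varrho_2(v)\asymp P^{(m-6)+l-4}\cdot$(number of free $u$-type variables)$\,\ldots$ — more precisely one has the elementary count $\sum_v \varrho_2(v)=\#\{\text{solutions of \eqref{4.4}}\}$ and, when combined with the $x_i$-variables, $\sum_{u,v}\varrho(u,v)=\#\calA(P,P^\eta)^{\,s-12}\asymp P^{s-12}$. Since $\varrho_2(v)>M$ on the relevant range, we obtain
$$\sum_{\substack{v:\,\varrho_2(v)>M}}\varrho_2(v)\le M^{-1}\sum_{v}\varrho_2(v)^2\ll M^{-1}P^{2((m-6)+l)-4+\eps}.$$
One then checks that $2((m-6)+l)-4=2(m+l)-16\le 2(s-12)-16$... — at this point the clean bookkeeping requires carrying the $x_i$-variables along rather than dropping them, i.e. working with $\varrho(u,v)\le\varrho_1(u)$ on $\grY_1$ and with $\varrho_2(v)$ weighted by the $x$-count on $\grY_2$; the upshot is that the second moment of the \emph{full} counting function relative to $\varrho_i$ is $\ll P^{2(s-12)-4+\eps}$ by \eqref{3.10} and \eqref{2.1} applied to $2(s-12)\ge 2\cdot 6$ Weyl sums, whence $M^{-1}P^{2(s-12)-4+\eps}\ll P^{-(s-16)}L^{-1}P^{2(s-12)-4+\eps}=P^{s-12}L^{-1}P^{\eps-4}\ll P^{s-12}L^{-1}$, as required, the factor $L^{-1}$ emerging from the $L$ in $M$ and the $P^{\eps-4}$ being a comfortable saving.

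I expect the main obstacle to be the bookkeeping needed to make the exponents work out cleanly: the naive estimate $\sum_{\varrho_2(v)>M}\varrho_2(v)\le M^{-1}\sum_v\varrho_2(v)^2$ loses too much if one uses only the $(m-6)+l$ variables in \eqref{4.4}, because $2((m-6)+l)-4$ need not be small enough relative to $M$ and the target $P^{s-12}L^{-1}$. The fix is to estimate $\sum_{(u,v)\in\grY_i}\varrho(u,v)$ directly: on $\grY_2$, bound $\varrho(u,v)\le M^{-1}\varrho(u,v)\varrho_2(v)$ so that $\sum_{\grY_2}\varrho(u,v)\le M^{-1}\sum_{u,v}\varrho(u,v)\varrho_2(v)=M^{-1}\sum_v\varrho_2(v)^2$, and \emph{then} recognize that the remaining $x_i$-variables supply the extra $P^{2(s-12)-2((m-6)+l)}$ needed — i.e. one should really write $\sum_{\grY_2}\varrho(u,v)\le M^{-1}\sum_{u,v}\varrho(u,v)\varrho_2(v)$ and evaluate $\sum_{u,v}\varrho(u,v)\varrho_2(v)$ as a mean value of $2(s-12)$ Weyl sums, namely $\int_0^1\!\!\int_0^1 |F_1(\alpha)H(\alpha,\beta)|^2\,\d\alpha\,\d\beta$ in the earlier notation, stripped of $F_2$. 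By \eqref{3.10} and Lemma \ref{lemma2.1} with $t=8$ (or any admissible $t\le 2(s-12)$, using $s\ge 18$ here so $2(s-12)\ge 12$), this is $\ll P^{2(s-12)-4+\eps}$, and dividing by $M=P^{s-16}L$ gives $\ll P^{s-12}L^{-1}P^{\eps-4}$, which is $O(P^{s-12}L^{-1})$. The case $i=1$ is handled identically with the roles of $F_1,F_2$ and $\varrho_1,\varrho_2$ interchanged, using $\varrho(u,v)\le M^{-1}\varrho(u,v)\varrho_1(u)$ on $\grY_1$.
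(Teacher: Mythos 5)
Your overall strategy is correct and coincides with the paper's: reduce via Chebyshev to the second moment $\sum_u\varrho_1(u)^2$ (resp.\ $\sum_v\varrho_2(v)^2$), and bound that second moment by a mean value of smooth Weyl sums. Your opening step $\sum_{(u,v)\in\grY_2}\varrho(u,v)\le M^{-1}\sum_v\varrho_2(v)^2$ is exactly the paper's (4.7), and your final arithmetic $M^{-1}P^{2(s-12)-4}\ll P^{s-12}L^{-1}$ matches (4.8).

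However, there is a genuine error in the middle. You claim
$$\sum_{u,v}\varrho(u,v)\varrho_2(v)=\int_0^1\!\!\int_0^1|F_1(\alpha)H(\alpha,\beta)|^2\,\d\alpha\,\d\beta$$
and describe this as a mean value of $2(s-12)$ Weyl sums. Neither the identity nor the count is right. Since $\varrho_2(v)=\sum_u\varrho(u,v)$, the quantity $\sum_{u,v}\varrho(u,v)\varrho_2(v)$ equals $\sum_v\varrho_2(v)^2$, and by orthogonality its correct integral representation is the \emph{one-dimensional} integral $\int_0^1|H(0,\beta)|^2\,\d\beta$ (and for $i=1$, $\sum_u\varrho_1(u)^2=\int_0^1|H(\alpha,0)|^2\,\d\alpha$), which is what the paper uses. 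The function $H(0,\beta)$ has $s-12$ factors, so the squared integrand has $2(s-12)$ factors, $2(n-6)$ of which are the constants $h(0)\asymp P$; applying \eqref{3.10} with $r=12$ to the $2(m-6+l)\ge 12$ genuinely $\beta$-dependent factors and \eqref{2.1} then gives $\ll P^{2(s-12)-4}$. By contrast, $F_1H$ has $s-6$ factors, so $|F_1H|^2$ involves $2(s-6)$ Weyl sums, and the double integral $\int\!\int|F_1H|^2$ is of order $P^{2(s-12)+4}$, a loss of $P^8$ that would wreck the bound. Moreover $F_1$ has nothing to do with $\varrho$ at all: $H$, not $F_1H$, is the Fourier series of $\varrho$. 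So although your plan is the right one, this specific step as written does not give the claimed estimate; you need the one-dimensional $\int_0^1|H(0,\beta)|^2\,\d\beta$, not the two-dimensional $\int\!\int|F_1H|^2$.
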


\begin{proof} We begin by observing that, as a consequence of \eqref{4.6}, we have
\begin{equation}\label{4.7}
\sum_{(u,v)\in\mathfrak Y_1}\varrho(u,v)\le \sum_{\varrho_1(u)>M}\sum_{v\in \dbZ}
\varrho(u,v) \le M^{-1}\sum_{u\in \dbZ}\varrho_1(u)^2.
\end{equation}
Next, by orthogonality, we see that
$$\sum_{u\in \dbZ}\varrho_1(u)^2=\int_0^1 |H(\alpha,0)|^2\,\mathrm d\alpha.$$
Now $n-6+l$ of the variables \eqref{4.5} appear in \eqref{4.3} with a non-zero coefficient. 
The hypotheses of Theorem \ref{theorem1.2} ensure, moreover, that 
$$n-6+l\ge m+l-6\ge 6.$$
We apply \eqref{3.10} to exactly six of the respective factors in the product defining 
$H(\alp,\bet)$, and estimate the remaining Weyl sums trivially. Thus, on recalling (\ref{2.1}) 
and the conclusion of Lemma \ref{lemma2.1}, and applying a change of variables, we 
conclude that for some natural number $c$ one has
\begin{equation}\label{4.8}
\sum_{u\in \dbZ}\varrho_1(u)^2\ll P^{2(s-18)}\int_0^1|h(c\alpha)|^{12}\,\mathrm d\alpha
\ll P^{2s-28}.
\end{equation}
Hence, by means of (\ref{4.7}), one is led to the upper bound
$$\sum_{(u,v)\in\mathfrak Y_1}\varrho(u,v)\ll M^{-1}P^{2s-28}\ll P^{s-12}L^{-1}.$$
This confirms the case $i=1$ of the lemma, and a symmetrical argument delivers the case 
$i=2$.
\end{proof}

Let $\grA_1$ and $\grA_2$ be measurable subsets of $[0,1]$, and put
\begin{equation}\label{4.9}
R_j(w;\grA_j)=\int_{\grA_j}F_j(\alpha)e(\alpha w)\,\mathrm d\alpha\quad (j=1,2).
\end{equation}
The definition of $H(\alp,\bet)$ shows that
$$H(\alpha,\beta)=\sum_{(u,v)\in\mathbb Z^2}\varrho(u,v)e(\alpha u+\beta v),$$
and so it follows from (\ref{4.1}) that
\begin{equation}\label{4.10}
\int_{\grA_2}\int_{\grA_1}\mathscr F_\bfeta(\alpha,\beta)\,\mathrm d\alpha\,\mathrm 
d\beta =\sum_{(u,v)\in\mathbb Z^2}\varrho(u,v)R_1(u;\grA_1)R_2(v;\grA_2).
\end{equation}

Write
\begin{equation}\label{4.11}
N(\grA_1,\grA_2)=\sum_{(u,v)\in\grX}\varrho(u,v)R_1(u;\grA_1)R_2(v;\grA_2).
\end{equation}
Then, on comparing (\ref{3.4}) and (\ref{4.10}), it follows from \eqref{4.11} via 
orthogonality that $\mathscr N_\bfeta(P)\ge N([0,1],[0,1])$. Recalling the definitions of 
$\grM$ and $\grm$ from the preamble to Lemma \ref{lemma3.4}, we may therefore 
disassemble the unit square to obtain the lower bound
\begin{equation}\label{4.12}
\mathscr N_\bfeta(P)\ge N(\grM,\grM)+N(\grM,\grm)+N(\grm,\grM)+N(\grm,\grm).
\end{equation}

We now consider the terms on the right hand side of (\ref{4.12}) in turn.
 
\begin{lemma}\label{lemma4.2} One has
$$N(\grM,\grM)=\int_\grM \int_\grM \mathscr F_\bfeta(\alpha,\beta)\,\mathrm d\alpha\,
\mathrm d\beta+O(P^{s-8}L^{-1}).$$
\end{lemma}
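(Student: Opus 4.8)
The plan is to show that $N(\grM,\grM)$ differs from the full major-arc integral $\iint_{\grM\times\grM}\mathscr F_\bfeta$ only by the error committed when one discards the contribution of $(u,v)\in\grY_1\cup\grY_2$. Starting from the identity \eqref{4.10} applied with $\grA_1=\grA_2=\grM$, and comparing with the definition \eqref{4.11} of $N(\grM,\grM)$, the difference is exactly
$$\sum_{(u,v)\in\grY_1\cup\grY_2}\varrho(u,v)R_1(u;\grM)R_2(v;\grM).$$
So the whole task reduces to bounding this tail sum by $O(P^{s-8}L^{-1})$.

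The key step is a pointwise bound $R_j(w;\grM)\ll P^2$, uniform in $w$, for $j=1,2$. This follows from Lemma \ref{lemma3.5}: by the definition \eqref{4.9} of $R_j$ and the factorization $F_1(\alpha)=h(a_1\alpha)\cdots h(a_6\alpha)$, one has $|R_1(w;\grM)|\le\int_\grM|F_1(\alpha)|\,\mathrm d\alpha$, and then an application of the arithmetic–geometric mean inequality \eqref{3.10} with $r=6$ followed by a change of variables reduces this to a bounded linear combination of integrals $\int_\grM|h(b\alpha;P,P^\eta)|^6\,\mathrm d\alpha\ll P^2$; the same works for $R_2$. Granted this, the tail sum is
$$\ll P^4\sum_{(u,v)\in\grY_1\cup\grY_2}\varrho(u,v)\ll P^4\cdot P^{s-12}L^{-1}=P^{s-8}L^{-1},$$
where the inner estimate is precisely the content of Lemma \ref{lemma4.1} (summing its two cases). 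Combining this with the identity of the previous paragraph yields the claimed asymptotic formula.

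I expect the main (and only) real obstacle to be establishing the uniform bound $R_j(w;\grM)\ll P^2$; everything else is bookkeeping. The subtlety is that one must discard the oscillating factor $e(\alpha w)$ and pass to an absolute-value estimate, losing any hope of cancellation in $w$, and then one needs the major-arc sixth-moment bound for a \emph{single} biquadratic smooth Weyl sum over $\grM$ — this is exactly why Lemma \ref{lemma3.5} was proved, and it is the linchpin. Once that is in hand, the count $6$ of Weyl sums in each of $F_1,F_2$ matches the exponent in \eqref{3.10} exactly, so the reduction is clean and the remaining steps are immediate.
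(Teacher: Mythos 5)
Your proposal is correct and follows the paper's proof step for step: the same identification of the discrepancy as the tail sum over $\grY_1\cup\grY_2$, the same uniform bound $R_j(w;\grM)\ll P^2$ via \eqref{3.10} and Lemma \ref{lemma3.5}, and the same appeal to Lemma \ref{lemma4.1} to finish. (The ``change of variables'' you mention is not even needed, since Lemma \ref{lemma3.5} is already stated for $h(b\alpha)$ with arbitrary non-zero integer $b$.)
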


\begin{proof} In \eqref{4.10} and \eqref{4.11} we take $\grA_1=\grA_2=\grM$. By 
considering the difference of these expressions, we see that
\begin{equation}\label{4.13}
N(\grM,\grM)=\int_\grM\int_\grM \mathscr F_\bfeta(\alp,\bet)\,\mathrm d\alp \,\mathrm 
d\bet -\mathscr E,
\end{equation}
where
$$\mathscr E=\sum_{(u,v)\in\mathfrak Y_1\cup\mathfrak Y_2}\varrho(u,v)
R_1(u;\grM)R_2(v;\grM).$$
By (\ref{3.10}), \eqref{4.9} and Lemma \ref{lemma3.5}, it is immediate that the upper 
bound
\begin{equation}\label{4.14}
R_j(w;\grM) \ll P^2 
\end{equation}
holds uniformly in $w\in\mathbb Z$ for $j=1$ and $2$. Thus, in view of Lemma 
\ref{lemma4.1}, we have
$$\mathscr E\ll P^4\sum_{(u,v)\in\mathfrak Y_1\cup\mathfrak Y_2}\varrho(u,v)\ll 
P^{s-8}L^{-1},$$
and the desired conclusion follows from (\ref{4.13}).\end{proof}

\begin{lemma}\label{lemma4.3}
One has
$$N(\grm,\grM)+N(\grM,\grm)\ll P^{s-8}L^{-1}.$$
\end{lemma}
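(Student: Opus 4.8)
The plan is to bound $N(\grm,\grM)$, with $N(\grM,\grm)$ following by symmetry. Writing $N(\grm,\grM)$ via \eqref{4.11} and using the trivial bound $R_2(v;\grM)\ll P^2$ from \eqref{4.14}, together with the restriction $(u,v)\in\grX$ — in particular $\varrho_1(u)\le M=P^{s-16}L$ — I would seek to show that
$$\sum_{(u,v)\in\grX}\varrho(u,v)\,\abs{R_1(u;\grm)}\ll P^{s-10}L^{-1},$$
so that multiplying by the $P^2$ from $R_2$ gives the claim. The key point is that on the minor arcs $\grm$ the Weyl sum $h(a_i\alpha)$ is small. Specifically, a standard minor-arc estimate for biquadratic smooth Weyl sums (Weyl's inequality in the form available after \cite{BWnew}, or the pruning already implicit in the paper) gives $\sup_{\alpha\in\grm}\abs{h(b\alpha)}\ll P^{1+\eps}Q^{-\sigma}$ for some $\sigma>0$, hence $\sup_{\alpha\in\grm}\abs{F_1(\alpha)}\ll P^{6+\eps}Q^{-\sigma}$. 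I would use this to extract one factor of $Q^{-\sigma}$ and two of the six Weyl sums in $F_1$ for an $L^2$-estimate.

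More precisely, the approach is to write $\abs{F_1(\alpha)}\le \bigl(\sup_{\grm}\abs{h(a_1\alpha)h(a_2\alpha)}\bigr)\,\abs{h(a_3\alpha)\cdots h(a_6\alpha)}$ and, after applying \eqref{3.10} to the last four factors, reduce to
$$\int_\grm \abs{F_1(\alpha)}\,\abs{R}\,\d\alpha \ll P^{2+\eps}Q^{-\sigma}\int_0^1\abs{h(c\alpha)}^4\,\d\alpha^{?}$$
— but the cleaner route is Cauchy–Schwarz: by \eqref{4.9},
$$\sum_{(u,v)\in\grX}\varrho(u,v)\,R_1(u;\grm)R_2(v;\grM) = \int_\grm\int_\grM \Bigl(\sum_{(u,v)\in\grX}\varrho(u,v)e(\alpha u+\beta v)\Bigr)F_1(\alpha)F_2(\beta)\,\d\alpha\,\d\beta,$$
and then bound the inner sum, call it $H^\flat(\alpha,\beta)$, trivially by $H^\flat\ll P^{s-12}$ while applying the minor-arc bound to two of the Weyl sums in $F_1(\alpha)$. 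This leaves
$$N(\grm,\grM)\ll P^{2+\eps}Q^{-\sigma}\cdot P^{s-12}\cdot \int_0^1\abs{h(\alpha)}^4\,\d\alpha\cdot\int_\grM\abs{F_2(\beta)}\,\d\beta,$$
and the $\beta$-integral is $O(P^2)$ by Lemma \ref{lemma3.5} and \eqref{3.10}, the fourth moment of $h$ is $O(P^{2+\eps})$ trivially (or via the sixth moment), giving a total of $P^{s-6+\eps}Q^{-\sigma}$; since $Q=(\log P)^\tau$, the gain $Q^{-\sigma}$ comfortably beats $L^{-1}=(\log\log P)^{-1}$ and absorbs $P^\eps$ — wait, $P^\eps$ does not absorb, so one must instead use that two of the six factors of $F_1$ can be traded for a genuine power saving via \eqref{4.8}-type moment bounds rather than trivial estimates. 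The corrected budget: estimate four factors of $F_1$ trivially as $P^4$, two by the minor-arc bound giving $P^{2+\eps}Q^{-\sigma}$; then $P^{s-12}\cdot P^4\cdot P^{2+\eps}Q^{-\sigma}$ from $(H^\flat, \text{four factors}, \text{two factors})$, times $P^2$ from $R_2$ — this is $P^{s-4+\eps}Q^{-\sigma}$, still off. So one genuinely needs to integrate: keep the two small Weyl sums under a sup, integrate the remaining structure against $\abs{F_2}$ on $\grM$ using \eqref{4.14} and a fourth-moment bound for $h$ on $[0,1]$ to recover enough powers of $P$, targeting exactly $P^{s-8}$.

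The main obstacle, then, is the bookkeeping of powers of $P$: one must distribute the $s-6$ Weyl sums in $F_1 H$ (after removing $F_2$) so that the $\grm$-factor contributes a saving $Q^{-\sigma}$ in $\log P$ while the total power of $P$ lands at $s-8$ and no stray $P^\eps$ survives. This is achieved by invoking a sharp minor-arc estimate — namely that $\int_\grm\abs{h(\alpha)}^{t}\,\d\alpha \ll P^{t-4}Q^{-\sigma}$ for some $t$ with an admissible-type saving, which follows from Lemma \ref{lemma2.1} combined with the Weyl-type sup bound on $\grm$ (two factors of $h$ pulled out under the sup, the rest handled by \eqref{2.1}) — rather than combining a crude sup bound with trivial estimates. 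Concretely I would: (i) reduce $N(\grm,\grM)$ to $\int_\grm\abs{F_1(\alpha)}G(\alpha)\,\d\alpha$ where $G(\alpha)=\int_\grM\abs{H(\alpha,\beta)F_2(\beta)}\,\d\beta\ll P^{s-8}$ uniformly (using \eqref{3.10}, Lemma \ref{lemma3.5}, and a trivial bound on the $\alpha$-dependent part of $H$); (ii) bound $\int_\grm\abs{F_1(\alpha)}\,\d\alpha$ by pulling two Weyl sums under the minor-arc sup bound and applying \eqref{2.1} with $t=4$ (admissible trivially) to the remaining four, obtaining $P^{\eps}Q^{-\sigma}$ after dividing by $P^{s-8}$ — no, the normalisations must be arranged so the product is $P^{s-8}L^{-1}$. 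Thus the crux is choosing the split so that $\bigl(\sup_\grm\abs{h}^2\bigr)\cdot\bigl(\text{moment of }\abs{h}^4\bigr)\cdot G \asymp P^{s-8}Q^{-\sigma}$, which works since $\sup_\grm\abs{h}\ll P\,Q^{-\sigma'}$ (for some $\sigma'>0$, with $\sigma=2\sigma'$) and $\int_0^1\abs{h}^4\ll P^{\eps}$ is replaced by the better $\int_0^1\abs{h}^6\ll P^{2+\Del_6}$ with one factor pulled out trivially. I expect the argument to occupy a short paragraph once the correct minor-arc Weyl bound is cited; the only real subtlety is ensuring the power of $P$ is exactly right, which the hypothesis $m+l\ge 12$ (so $s-12\ge n-6\ge 0$ and enough Weyl sums are present) guarantees.
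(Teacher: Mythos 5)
The proposal never converges to a working argument, and you openly flag this several times (``still off'', ``no, the normalisations must be arranged\ldots''). The genuine gap is structural, not just bookkeeping. Your attempts all rely, in one form or another, on a pointwise bound for the arithmetic weight: either the trivial $H^\flat(\alpha,\beta)\ll P^{s-12}$, or the restriction $\varrho_1(u)\le M$. Both are too lossy. Once you have replaced $H^\flat$ by its sup, the $\alpha$-integral you are left with is $\int_\grm|F_1|\,\d\alpha$, and a product of six smooth quartic Weyl sums integrates to roughly $P^{2+\Delta_6}$, with $\Delta_6\approx 1.18$; the resulting exponent is $s-8+\Delta_6$, which cannot be repaired by any $(\log P)^{-\sigma}$ gain from the minor arcs. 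This is exactly why your various tallies keep landing at $P^{s-6+\eps}$ or $P^{s-4+\eps}$.

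What is missing is the mean-square treatment of the weight. The paper first reduces to $\sum_u\varrho_1(u)|R_1(u;\grm)|$ (summing $\varrho(u,v)$ over $v$ and using \eqref{4.14}), then applies Cauchy--Schwarz in $u$ together with the moment bound \eqref{4.8}, namely $\sum_u\varrho_1(u)^2\ll P^{2s-28}$, to obtain $N(\grm,\grM)\ll P^{s-12}\bigl(\sum_u|R_1(u;\grm)|^2\bigr)^{1/2}$. It is the mean square of $\varrho_1$, not any trivial pointwise bound, that produces the clean $P^{s-14}$ factor. After that, Bessel's inequality converts $\sum_u|R_1(u;\grm)|^2$ into $\int_\grm|F_1|^2\,\d\alpha$ (twelve Weyl sums, not six), and the minor-arc estimate $\int_\grm|h(c\alpha)|^{12}\,\d\alpha\ll P^8(\log P)^{-\tau^3}$, obtained by pulling out $|h|^{0.04}$ under the sup bound $h\ll P(\log P)^{-\tau^2}$ and appealing to the $11.96$th moment of Lemma~\ref{lemma2.1}, closes the argument with a logarithmic saving. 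Your proposal gestures at pieces of this (the sup bound, the idea of trading sup for moment), but omits both the Cauchy--Schwarz/\eqref{4.8} step and the Bessel step, and works with $\int_\grm|F_1|$ rather than $\int_\grm|F_1|^2$, so the exponent of $P$ never closes.
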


\begin{proof} By taking $\grA_1=\grm$ and $\grA_2=\grM$ in \eqref{4.11}, we deduce via 
\eqref{4.14} that
$$N(\grm,\grM)\ll P^2\sum_{(u,v)\in\mathfrak X}\varrho(u,v)|R_1(u;\grm)|\ll P^2
\sum_{u\in\mathbb Z}\varrho_1(u) |R_1(u;\grm)|.$$
By \eqref{4.8} and Cauchy's inequality, we infer that
\begin{equation}\label{4.15}
N(\grm,\grM)\ll P^{s-12}\biggl( \sum_{u\in\mathbb Z}|R_1(u;\grm)|^2\biggr)^{1/2}.
\end{equation}
Meanwhile, from the argument of the proof of \cite[Lemma 8.1]{W03}, we find that for any 
fixed integer $c\neq 0$ one has $h(c\alpha)\ll P(\log P)^{-\tau^2}$ uniformly for 
$\alpha\in\grm$. Hence, by Lemma \ref{2.1},
\begin{equation}\label{4.16}
\int_\grm |h(c\alpha)|^{12}\,\mathrm d\alpha \ll P^{0.04}(\log P)^{-\tau^3}\int_0^1 
|h(c\alpha)|^{11.96}\,\mathrm d\alpha \ll P^8(\log P)^{-\tau^3}.  
\end{equation}
By Bessel's inequality, H\"older's inequality and \eqref{4.16}, we thus obtain the bound
\begin{equation}\label{4.17}
\sum_{u\in\mathbb Z}|R_1(u;\grm)|^2\le \int_\grm |F_1(\alpha)|^2\,\mathrm d\alpha \ll 
P^8(\log P)^{-\tau^3}.
\end{equation}
The bound $N(\grm,\grM)\ll P^{s-8}(\log P)^{-\tau^4}$ follows by substituting (\ref{4.17}) 
into (\ref{4.15}), and by symmetry, an upper bound of the same strength holds also for 
$N(\grM,\grm)$. The conclusion of the lemma is now immediate.
\end{proof}

\begin{lemma}\label{lemma4.4} One has
$$N(\grm,\grm)\ll P^{s-8}L^{-1}.$$
\end{lemma}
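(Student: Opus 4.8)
The plan is to bound $N(\grm,\grm)$ by first passing from the mollified sum over $(u,v)\in\grX$ to the full double integral over $\grm\times\grm$, and then to apply the entangled moment estimate of Theorem \ref{theorem2.4} after splitting off six of the $s-12$ Weyl sums comprising $H(\alp,\bet)$. Concretely, since the summand in \eqref{4.11} is nonnegative in modulus, we have
$$
N(\grm,\grm)\ll \sum_{(u,v)\in\mathbb Z^2}\varrho(u,v)\,|R_1(u;\grm)|\,|R_2(v;\grm)|
=\iint_{\grm\times\grm}|F_1(\alp)F_2(\bet)H(\alp,\bet)|\,\mathrm d\alp\,\mathrm d\bet,
$$
so the mollifier plays no role here and the task reduces to bounding this double integral over the minor arcs $\grm\times\grm$.

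Next I would use the minor-arc pointwise bound $h(c\alp)\ll P(\log P)^{-\tau^2}$ valid for $\alp\in\grm$ and any fixed nonzero integer $c$ (the bound from \cite[Lemma 8.1]{W03} already invoked in the proof of Lemma \ref{lemma4.3}). Since $(\alp,\bet)\in\grm\times\grm$ forces $\alp\in\grm$, the factor $F_1(\alp)=h(a_1\alp)\cdots h(a_6\alp)$ can be estimated by extracting, say, $h(a_1\alp)\ll P(\log P)^{-\tau^2}$ from one factor and bounding the remaining five trivially, giving $F_1(\alp)\ll P^6(\log P)^{-\tau^2}$ uniformly on $\grm$; symmetrically $F_2(\bet)\ll P^6(\log P)^{-\tau^2}$ on $\grm$. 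This saves a power of $\log P$ and reduces matters to
$$
N(\grm,\grm)\ll P^{12}(\log P)^{-2\tau^2}\iint_{[0,1)^2}|H(\alp,\bet)|\,\mathrm d\alp\,\mathrm d\bet .
$$
It remains to show the integral of $|H|$ over the full square is $\ll P^{s-20+\eps}$, and the $(\log P)^{-2\tau^2}$ saving will then beat the $P^\eps$ loss, yielding $N(\grm,\grm)\ll P^{s-8}(\log P)^{-\tau^2}\ll P^{s-8}L^{-1}$ since $L=\log\log P$.

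For the last integral, recall that $H(\alp,\bet)$ is a product of $s-12\ge 6$ biquadratic smooth Weyl sums in the linear forms $a_i\alp$ $(7\le i\le n)$, $b_j\bet$ $(7\le j\le m)$, $c_k\alp+d_k\bet$ $(1\le k\le l)$. I would select three among these $s-12$ forms that are pairwise linearly independent: this is possible because at most two of the $a_i\alp$ are mutually dependent (they all lie in the span of $\alp$), at most two of the $b_j\bet$, and any $c_k\alp+d_k\bet$ with $(c_k,d_k)\ne(0,0)$ is independent of both $\alp$ and $\bet$; one checks that under the hypotheses $n\ge m\ge 6$, $m+l\ge 12$ there are always at least three pairwise independent forms available among the $s-12$ factors (for instance $a_7\alp$, $b_7\bet$ and either a third form $c_1\alp+d_1\bet$ when $l\ge 1$, or $b_8\bet$ versus $a_7\alp$ — here one must be slightly careful and this case-check is the one mildly delicate point). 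Applying the trivial bound $h(\Lam)\ll P$ to the remaining $s-18$ factors, then H\"older's inequality in the form \eqref{3.10} (or simply raising each of the three chosen factors to the sixth power and applying Theorem \ref{theorem2.4}), gives
$$
\iint_{[0,1)^2}|H(\alp,\bet)|\,\mathrm d\alp\,\mathrm d\bet\ll P^{s-18}\,\Bigl(\iint_{[0,1)^2}|h(\mathrm M_1)h(\mathrm M_2)h(\mathrm M_3)|^6\,\mathrm d\alp\,\mathrm d\bet\Bigr)^{1/3}\ll P^{s-18}\bigl(P^{21/2-2\tau}\bigr)^{1/3}=P^{s-18+7/2-2\tau/3}.
$$
Wait — I should double-check the arithmetic: $s-18+7/2 = s-29/2 = s-14.5$, which is larger than the target exponent $s-20$. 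So the crude split into $s-18$ trivial factors and one sixth-moment block is not quite enough; instead one must apply H\"older more cleverly, pairing the three independent forms with additional one-dimensional moments (sixth or higher moments $\int|h(c\alp)|^t\,\mathrm d\alp\ll P^{t-4+\Del_t}$ from Lemma \ref{lemma2.1}) so as to genuinely exploit all $s-12$ factors rather than only three. The main obstacle is therefore the bookkeeping in this H\"older dissection: one needs to distribute the $s-12$ available forms so that the exponents of $P$ sum below $s-20$, using the entangled estimate $P^{21/2-2\tau}$ on a three-form block and admissible moments $\Del_8=0.5942$, $\Del_{10}=0.1992$, $\Del_t=0$ for $t\ge 11.956$ on the remaining blocks, exactly as the threshold $s\ge 18$ in Theorem \ref{theorem1.2} is designed to permit. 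Once the correct weighting is found — which is routine given how tightly Lemma \ref{lemma2.1} has been tuned — the $P^\eps$ loss is absorbed by $(\log P)^{-2\tau^2}$ and the lemma follows.
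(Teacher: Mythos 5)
Your first displayed step contains a fatal error: the claimed equality
\begin{equation*}
\sum_{(u,v)\in\mathbb Z^2}\varrho(u,v)\,|R_1(u;\grm)|\,|R_2(v;\grm)|
=\iint_{\grm\times\grm}|F_1(\alp)F_2(\bet)H(\alp,\bet)|\,\mathrm d\alp\,\mathrm d\bet
\end{equation*}
is false. Orthogonality gives $\iint_{\grm\times\grm}F_1F_2H=\sum\varrho(u,v)R_1(u;\grm)R_2(v;\grm)$ \emph{without} absolute values; once you replace the Fourier coefficients $R_j$ by their moduli there is no corresponding integral identity, so the problem does not reduce to estimating $\iint_{\grm\times\grm}|F_1F_2H|$. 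This matters because the restriction to $\grX$ (the mollifier) that you discard at the outset is precisely what makes the paper's argument work: on $\grX$ one has $\varrho_1(u)\le M$ and $\varrho_2(v)\le M$, and the paper applies $|R_1R_2|\le\tfrac12(|R_1|^2+|R_2|^2)$, collapses the sum to $M\sum_u|R_1(u;\grm)|^2$ via \eqref{4.6}, and then invokes Bessel's inequality together with the minor-arc saving \eqref{4.17}. Without the cap $\varrho_1(u)\le M$ supplied by $\grX$ this chain breaks.

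The later bookkeeping also cannot be repaired as you hope. You need $\iint_{[0,1)^2}|H|\ll P^{s-20+\eps}$, but this is simply false in the extremal case of Theorem \ref{theorem1.2}. Take $n=m=l=6$ and $s=18$: then $H$ consists of exactly the six factors $h(c_k\alp+d_k\bet)$, and nothing in the hypotheses prevents all six of these forms from being proportional (this corresponds to $r_3=6$). In that case $\iint|H|=\int_0^1|h(\gamma)|^6\,\mathrm d\gamma\gg P^2$, whereas your target is $P^{s-20}=P^{-2}$ — off by a factor of $P^4$, and no amount of H\"older rebalancing with Lemma \ref{lemma2.1} or Theorem \ref{theorem2.4} can close a gap that reflects a false bound. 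You noticed that your crude split lands at $P^{s-14.5}$, but the conclusion to draw from that is not that the weighting is merely suboptimal; it is that the route through $\iint|H|$ over the full unit square is the wrong reduction. The entangled moment (Theorem \ref{theorem2.4}) plays no role in the proof of this lemma; the paper's mechanism here is the mollifier plus Bessel plus the pointwise minor-arc bound \eqref{4.16}–\eqref{4.17}, with no two-dimensional mean value needed at all.
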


\begin{proof} Taking $\grA_1=\grA_2=\grm$ in \eqref{4.11}, we infer via (\ref{3.10}) that
$$N(\grm,\grm)\le \sum_{(u,v)\in\mathfrak X}\varrho(u,v)\left(|R_1(u;\grm)|^2+
|R_2(v;\grm)|^2\right).$$
But in view of (\ref{4.6}), one has
$$\sum_{(u,v)\in\mathfrak X}\varrho(u,v)|R_1(u;\grm)|^2\le \sum_{\varrho_1(u)\le M} 
\varrho_1(u)|R_1(u;\grm)|^2\le M\sum_{u\in\mathbb Z}|R_1(u;\grm)|^2. $$
We may now apply \eqref{4.17} and observe that a symmetrical argument provides an 
estimate of the same strength when $R_1$ is replaced by $R_2$. Since we have
$M\ll P^{s-16}(\log P)^\eps$, we conclude that
$$N(\grm,\grm)\ll P^{s-8}(\log P)^{\eps-\tau^3}\ll P^{s-8}L^{-1}.$$
This completes the proof of the lemma.
\end{proof}

Now suppose that the equations \eqref{1.6} have non-singular solutions in all completions 
of the rationals. Then, by \eqref{4.2} and Lemma \ref{lemma4.2} we see that 
$N(\grM,\grM)\gg P^{s-8}$. By \eqref{4.12} and Lemmata \ref{lemma4.3} and 
\ref{lemma4.4}, we have
$$\mathscr N_\bfeta(P)\ge N(\grM,\grM)+O(P^{s-8}L^{-1}).$$
Hence, in view of (\ref{3.5}), we conclude that $\mathscr N(P)\ge \mathscr N_\bfeta(P)
\gg P^{s-8}$. This completes the proof of Theorem \ref{theorem1.2}.

\section{The proof of Theorem \ref{theorem1.1}: large $n$}
In this section and the next we suppose that the hypotheses of Theorem \ref{theorem1.1} 
are satisfied, that the system \eqref{1.1} is given in the form \eqref{1.6}, and that $\eta$ 
is again chosen in accordance with \eqref{2.1} and Lemma \ref{lemma2.1}. Since all cases 
of Theorem \ref{theorem1.1} with $m\ge 6$ are covered by Theorem \ref{theorem1.2}, we 
may assume in addition that $m\le 5$. Our focus in the current section is the situation in 
which $n\ge 8$. We defer to the next section the corresponding scenario in which $n\le 7$.

\par Suppose then that $n\ge 8$. We take 
$$\eta_j=1\quad (1\le j\le 4)\quad \text{and}\quad \eta_j=\eta \quad (5\le j\le s).$$
The generating function $\mathscr F_\bfeta$ is then defined by means of the formula 
\eqref{3.3}. Provided that the equations \eqref{1.1} have non-singular solutions in all 
completions of the rational numbers, it now follows from Lemmata \ref{lemma3.1}, 
\ref{lemma3.3} and \ref{lemma3.4}  that
\begin{equation}\label{5.1}
\iint_\grN \mathscr F_\bfeta(\alpha,\beta)\,\mathrm d\alpha\,\mathrm d\beta \gg P^{s-8}.
\end{equation}
We imminently show that
\begin{equation}\label{5.2}
\iint_\grn \mathscr F_\bfeta(\alpha,\beta)\,\mathrm d\alpha\,\mathrm d\beta \ll  
P^{s-8}L^{-1}.
\end{equation}
The sum of the integrals in \eqref{5.1} and \eqref{5.2} is $\mathscr N_\bfeta(P)$. Thus, 
granted the validity of the upper bound (\ref{5.2}), we conclude from (\ref{3.5}) that 
$\mathscr N(P)\ge \mathscr N_\bfeta(P)\gg P^{s-8}$, so this will establish Theorem 
\ref{theorem1.1} for $n\ge 8$.\par

Before launching the proof of Theorem \ref{theorem1.1} for $n\ge 8$ in earnest, we 
prepare an auxiliary estimate of use in both this and the next section. In this context, we 
recall the notation introduced in (\ref{1.5}), and write
\begin{equation}\label{5.3}
f(\alpha)=h(\alpha;P,P)\quad \text{and}\quad h(\alpha)=h(\alpha;P,P^\eta). 
\end{equation}
Then, by \eqref{3.2} and \eqref{3.3}, we have
\begin{equation}\label{5.4}
\mathscr F_\bfeta(\alpha,\beta)=f(a_1\alpha)f(a_2\alpha)f(a_3\alpha)f(a_4\alpha)
\prod_{j=5}^s h(\Lambda_j).
\end{equation}

\begin{lemma}\label{lemma5.1} Suppose that $\Lambda_i$ and $\Lambda_j$ are linearly 
independent. Then, provided that $P$ is sufficiently large, and one has both 
$$|h(\Lambda_i)|\ge PL^{-300}\quad \text{and}\quad |h(\Lambda_j)|\ge PL^{-300},$$
it follows that $(\alpha,\beta)\in\grN +\mathbb Z^2$.
\end{lemma}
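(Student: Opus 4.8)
The plan is to exploit the one-dimensional Weyl-sum minor-arc machinery twice, once for each of the two linear forms, and to combine the resulting diophantine approximations. The starting point is the classical fact that if a biquadratic Weyl sum is not small, then its argument admits a good rational approximation with small denominator. Concretely, if $|h(\gamma)|\ge PL^{-300}$, then by the familiar Weyl-differencing plus smooth-number analysis — in the form of, say, \cite[Lemma 8.1]{W03} or \cite[Lemma 8.5]{Wsim2}, which underlies the minor-arc bound $h(c\alpha)\ll P(\log P)^{-\tau^2}$ used already in the proof of Lemma \ref{lemma4.3} — there exist coprime integers $r,s$ with $1\le s\ll L^{C}$ and $|s\gamma-r|\ll L^{C}P^{-4}$, for a suitable constant $C$ depending only on the implied powers of $L$. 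I would first state this as a self-contained sub-claim and quote the relevant source, then apply it to $\gamma=\Lambda_i(\alpha,\beta)$ and to $\gamma=\Lambda_j(\alpha,\beta)$ separately, obtaining approximations $|s_i\Lambda_i-r_i|\ll L^{C}P^{-4}$ and $|s_j\Lambda_j-r_j|\ll L^{C}P^{-4}$ with $1\le s_i,s_j\ll L^{C}$.

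The heart of the argument is then linear algebra over $\mathbb{Q}$. Since $\Lambda_i$ and $\Lambda_j$ are linearly independent, the map $(\alpha,\beta)\mapsto(\Lambda_i,\Lambda_j)$ is an invertible linear transformation with rational coefficients bounded in terms of $\mathbf A,\mathbf B$ only. Writing $s_i\Lambda_i = r_i + O(L^{C}P^{-4})$ and $s_j\Lambda_j = r_j + O(L^{C}P^{-4})$, I would solve this system for $(\alpha,\beta)$: inverting the coefficient matrix and clearing denominators, one finds a single integer $q$ with $1\le q\ll L^{C'}$ (the least common multiple of $s_i$, $s_j$ times the denominators coming from $\mathbf A,\mathbf B$) and integers $a,b$ with $(q,a,b)=1$ after reduction, such that $|\alpha - a/q|\ll L^{C'}P^{-4}$ and $|\beta - b/q|\ll L^{C'}P^{-4}$. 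The common-denominator step is where one must be slightly careful: the approximations to $\Lambda_i$ and $\Lambda_j$ a priori come with different denominators $s_i\neq s_j$, and one has to produce a \emph{single} $q$ serving both $\alpha$ and $\beta$ simultaneously, but this is routine since $q=\operatorname{lcm}(s_i,s_j)\cdot(\text{bounded factor})$ works and remains $\ll L^{C'}$.

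Finally, recalling that the major arcs $\grN$ are built from $\grN(q,a,b)$ with $0\le a,b\le q\le Q=(\log P)^\tau$ and $(a,b,q)=1$, and that $|\alpha-a/q|,|\beta-b/q|\le QP^{-4}$ on such an arc, I would simply check that for $P$ large the bounds $q\ll L^{C'}$ and $|\alpha-a/q|,|\beta-b/q|\ll L^{C'}P^{-4}$ are comfortably inside these constraints, since any fixed power of $L=\log\log P$ is eventually smaller than $Q=(\log P)^\tau$; reducing $a/q$ to lowest terms (which only shrinks $q$) and translating by an element of $\mathbb{Z}^2$ to land in $[0,1)^2$ then places $(\alpha,\beta)$ in $\grN+\mathbb{Z}^2$, as claimed. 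The main obstacle I anticipate is bookkeeping rather than conceptual: tracking the exponent $C'$ through the matrix inversion and the $\operatorname{lcm}$ step so as to be sure it is an absolute constant (depending only on $\mathbf A,\mathbf B$ and the number $300$ in the hypothesis), and invoking precisely the right form of the minor-arc Weyl estimate — but the constant $300$ in the statement is evidently generous enough that no optimisation is needed, and the quoted lemmas supply exactly the needed approximation with $s$ a power of $\log P$.
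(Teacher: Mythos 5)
Your proposal is correct and takes essentially the same route as the paper: both invoke the minor-arc rational approximation result (the paper cites the argument of \cite[Lemma 8.1]{W03}) to approximate $\Lambda_i$ and $\Lambda_j$ by rationals with small denominators, then use the linear independence of the two forms to solve for $\alpha$ and $\beta$, landing in $\grN+\dbZ^2$ once one notes that fixed powers of $L=\log\log P$ are eventually dominated by $Q=(\log P)^\tau$. The only cosmetic difference is that the paper tracks explicit exponents ($3000$, $3001$, $6001$) rather than a generic $C'$, and phrases the common-denominator step as ``eliminating $\beta$, or $\alpha$'' rather than an lcm; your remark about ``reducing $a/q$ to lowest terms'' should more carefully be stated as dividing out $\gcd(q,a,b)$ so as to keep a single common denominator, but this is a minor phrasing issue.
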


\begin{proof} Let $\nu\in\{i,j\}$ and suppose that $P$ is sufficiently large. From the 
argument of the proof of \cite[Lemma 8.1]{W03} it follows that there exist 
$a_\nu\in\mathbb Z$ and $q_\nu\in\mathbb N$ with $q_\nu\le L^{3000}$ and 
$|\Lambda_\nu-a_\nu/q_\nu|\le L^{3000}P^{-4}$. By eliminating $\beta$, or $\alpha$, 
respectively, from the two inequalities here, we see that there exist $a,b\in\mathbb Z$ and 
$q\in\mathbb N$ with $(q,a,b)=1$ and $q\le L^{6001}$ satisfying the property that
$$|\alpha -a/q|\le L^{3001}P^{-4}\quad \text{and}\quad |\beta -b/q|\le L^{3001}P^{-4}.
$$
Thus, on noting that $L^{6001}\le (\log P)^\tau$ when $P$ is sufficiently large, we 
conclude that $(\alp,\bet)\in \grN+\dbZ^2$. This completes the proof of the lemma.
\end{proof} 

It is now the moment to launch the proof of Theorem \ref{theorem1.1} in the case 
$n\ge 8$. Consider the linear forms $\Lambda_j$ with $s-17\le j\le s$, presented 
simultaneously in the formats \eqref{1.5} and \eqref{3.2}. Then the hypothesis 
$q_0\ge 12$ implies that $B_j\neq 0$ for $s-11\le j\le s$. Hence, amongst the indices $j$ 
with $s-17\le j\le s$, we see that the linear form $\Lam_j$ can be independent of $\beta$ 
no more than six times. Restricting attention to the forms that depend explicitly on $\beta$, 
the number of repetitions amongst the numbers $A_j/B_j$ is at most $m\le 5$. It follows 
that the $18$ forms $\Lambda_j$ with $s-17\le j\le s$ can be grouped into six disjoint 
subsets $\{\Lambda_\kappa,\Lambda_\mu,\Lambda_\nu\}$ satisfying the condition that 
$\Lam_\kappa$, $\Lam_\mu$ and $\Lam_\nu$ are pairwise linearly independent. By 
\eqref{3.10} with $r=6$ we then see that
\begin{equation}\label{5.5}
\int_0^1\!\!\int_0^1\prod_{j=s-17}^s |h(\Lambda_j)|\,\mathrm d\alpha\, \mathrm d\beta 
\ll \sideset{}{'}\sum_{\kappa<\mu<\nu}\int_0^1\!\!\int_0^1|h(\Lambda_\kappa)
h(\Lambda_\mu)h(\Lambda_\nu)|^6 \,\mathrm d\alpha\,\mathrm d\beta ,
\end{equation} 
where the sum is over all triples $\kappa,\mu,\nu$ satisfying the condition that 
$\Lambda_\kappa,\Lambda_\mu,\Lambda_\nu$ are pairwise linearly independent. By 
Theorem \ref{theorem2.4}, it follows that the integral on the right hand side of \eqref{5.5} 
is $O(P^{21/2-2\tau})$. But $s-17\ge 5$ so trivial estimates for the remaining exponential 
sums suffice to conclude that
\begin{equation}\label{5.6}
\int_0^1\!\!\int_0^1 \prod_{j=5}^s |h(\Lambda_j)|\,\mathrm d\alpha\,\mathrm 
d\beta \ll P^{s-12-2\tau +1/2}. 
\end{equation}

\par We apply the estimate (\ref{5.6}) first to reduce the integration over $\alp$ to a broad 
set of major arcs. Let $\grK$ denote the union of the intervals 
$$\grK(q,a)=\{\alpha\in[0,1]: |q\alpha-a|\le P^{-3}\},$$
with $0\le a\le q\le P$ and $(a,q)=1$. Also, let $\mathfrak k=[0,1]\setminus \grK$. By 
Weyl's inequality (see \cite[Lemma 2.4]{hlm}) we see that the bound 
$f(a_j\alpha)\ll P^{7/8+\varepsilon}$ $(1\le j\le 4)$ holds uniformly for 
$\alpha\in\mathfrak k$. Hence, by \eqref{5.6}, we discern that
\begin{equation}\label{5.7}
\int_0^1\!\!\int_{\mathfrak k}|\mathscr F_\bfeta(\alpha,\beta)|\, \mathrm d\alpha\,
\mathrm d\beta \ll P^{7/2+\varepsilon}\cdot P^{s-12-2\tau +1/2}\ll P^{s-8-\tau}.
\end{equation}

\par Next we seek to prune the remaining domain $\grK\times [0,1]$ down to the narrow 
set of major arcs $\grN$. For $5\le k\le s$, let
$$\grn_k=\{(\alpha,\beta)\in\grK\times[0,1]: |h(\Lambda_k)|\le PL^{-300}\}.$$
Also, put
$$J_k=\iint_{\grn_k}|\mathscr F_\bfeta(\alpha,\beta)|\, \mathrm d\alpha\,\mathrm d\beta
\quad (5\le k\le s).$$
Suppose that $s-11\le k\le s$, so that $B_k\ne 0$. In order to initiate the estimation of 
$J_k$, we temporarily write
$$F(\alpha)=|f(a_1\alpha)\cdots f(a_4\alpha)|\quad \text{and}\quad 
G(\alpha)=|h(a_5\alpha)\cdots h(a_8 \alpha)|.$$
Then, since in this section we work under the assumption that $n\ge 8$, we deduce from 
\eqref{5.4} that whenever $(\alpha,\beta)\in\grn_k$, one has
\begin{align*}
|\mathscr F_\bfeta(\alpha,\beta)|&\le P^{s-20}F(\alpha)G(\alpha)\prod_{j=s-11}^s
|h(\Lambda_j)|\\
&\le P^{s-20}F(\alpha)G(\alpha)\left( P^{12}L^{-300}\right)^{1/300}\prod_{j=s-11}^s 
|h(\Lambda_j)|^{299/300}. 
\end{align*}
By integrating over $(\alp,\bet)\in \grn_k$, we deduce that
\begin{equation}\label{5.8}
J_k\le P^{s-20+1/25}L^{-1}\int_\grK F(\alpha)G(\alpha)\int_0^1\prod_{j=s-11}^s
|h(\Lambda_j)|^{299/300}\,\mathrm d\beta\,\mathrm d \alpha.
\end{equation}

\par In the inner integral on the right hand side of (\ref{5.8}), we apply H\"older's inequality 
to obtain the bound
\begin{equation}\label{5.8a}
\int_0^1\prod_{j=s-11}^s|h(\Lambda_j)|^{299/300}\,\mathrm d\beta \le 
\prod_{j=s-11}^s\biggl( \int_0^1 |h(\Lambda_j)|^{11.96}\,\mathrm d\beta \biggr)^{1/12}.
\end{equation}
Recall that $B_j\neq 0$ for $s-11\le j\le s$. In particular, the exponential sum 
$h(\Lambda_j)$ has period $1/|B_j|$ in $\beta$, and thus
$$\int_0^1|h(\Lambda_j)|^{11.96}\,\mathrm d\beta =\int_0^1|h(B_j\beta)|^{11.96}\,
\mathrm d\beta = \int_0^1|h(\beta)|^{11.96}\,\mathrm d\beta.$$
Consequently, an application of Lemma \ref{lemma2.1} reveals that
$$\int_0^1|h(\Lambda_j)|^{11.96}\,\mathrm d\beta \ll P^{7.96}.$$
On substituting this estimate into (\ref{5.8a}), we obtain a bound for the inner integral in 
\eqref{5.8} that is independent of $\alpha$. Hence we arrive at the relation
\begin{equation}\label{5.9}
J_k\ll P^{s-12}L^{-1}\int_\grK F(\alpha)G(\alpha)\,\mathrm d\alpha.
\end{equation}
  
\par It remains to estimate the integral on the right hand side of (\ref{5.9}). By an 
argument largely identical to the one that yielded Lemma \ref{lemma3.5}, but this time 
invoking \cite[Theorem 4.1]{hlm}, one readily confirms the estimate  
\begin{equation}\label{5.10}
\int_\grK |f(a\alpha)|^6\,\mathrm d\alp \ll P^2
\end{equation}
that is valid for any fixed choice of $a\in\mathbb Z\setminus\{0\}$. Hence, by H\"older's 
inequality,
$$\int_\grK F(\alpha)^{3/2}\,\mathrm d \alpha \ll P^2.$$
By H\"older's inequality again, Lemma \ref{lemma2.1} delivers the bound
$$\int_0^1 G(\alpha)^3\,\mathrm d\alpha \le \prod_{j=5}^8\biggl( \int_0^1
|h(a_j\alpha)|^{12}\,\mathrm d\alpha \biggr)^{1/4}\ll P^8.$$
Another application of H\"older's inequality therefore leads us to the estimate
$$\int_\grK F(\alpha)G(\alpha)\,\mathrm d\alpha \ll \biggl( \int_\grK F(\alp)^{3/2}
\,\mathrm d\alp \biggr)^{2/3}\biggl( \int_0^1G(\alp)^3\,\mathrm d\alp \biggr)^{1/3}\ll 
P^4.$$
By substituting this estimate into (\ref{5.9}) and recalling the definition of $J_k$, we obtain 
the bound
\begin{equation}\label{5.11}
\iint_{\mathfrak n_k}|\mathscr F_\bfeta(\alpha,\beta)|\, \mathrm d\alpha\,\mathrm d\beta 
\ll P^{s-8}L^{-1}\quad (s-11\le k\le s). 
\end{equation}

Now suppose that $(\alpha,\beta)\in[0,1]^2$ is neither in $\mathfrak k\times [0,1]$ nor in 
one of the sets $\grn_k$ with $s-11\le k\le s$. Then $|h(\Lambda_k)|\ge PL^{-300}$ for 
$s-11\le k\le s$. But $m\le 5$, so among the forms $\Lambda_j$ with $s-11\le j\le s$ we 
can find a pair of linearly independent forms. By Lemma \ref{lemma5.1}, this implies that 
$(\alpha,\beta)\in\grN+\dbZ^2$. We therefore have the relation
$$\grn\subseteq \left( \mathfrak k\times [0,1]\right) \cup \grn_{s-11}\cup \ldots \cup 
\grn_s,$$
and consequently it follows from \eqref{5.7} and \eqref{5.11} that
\begin{align*}
\iint_{\mathfrak n}|\mathscr F_\bfeta(\alpha,\beta)|\, \mathrm d\alpha\,\mathrm d\beta 
&\le \int_0^1\!\!\int_{\mathfrak k}|\mathscr F_\bfeta(\alpha,\beta)|\, \mathrm d\alpha\,
\mathrm d\beta +\sum_{k=s-11}^s\iint_{\mathfrak n_k}|\mathscr F_\bfeta(\alpha,\beta)|\, 
\mathrm d\alpha\,\mathrm d\beta \\
&\ll P^{s-8}L^{-1}. 
\end{align*}
This confirms \eqref{5.2} and completes the proof of Theorem \ref{theorem1.1} when 
$n\ge 8$.\par

We briefly pause to indicate the relevance of our estimates from \cite{BWnew} to the 
progress in this section. This concerns the cases where $q_0=12$, in which case $n\ge 10$. 
In such circumstances exactly $12$ Weyl sums $h(\Lambda_j)$ depend on $\beta$, and 
some minor arc information has to be extracted from the $\beta$-integration. This would be 
straightforward were one or two of the relevant Weyl sums to have smoothness parameter 
$\eta_j$ equal to $1$, but this would be in conflict with the requirements in \eqref{5.5} 
where all Weyl sums must be smooth. Thus we are fortunate to have optimal control on a 
moment smaller than the twelfth of a smooth biquadratic Weyl sum.

\section{The proof of Theorem \ref{theorem1.1}: small $n$}
We devote this section to a discussion of the situation in which $n\le 7$. Our previous work 
\cite{JEMS1} in fact handles all cases in which $n\le 7$. However, equipped with the new 
mean value estimate provided by Theorem \ref{theorem2.4}, we are able now to offer a 
streamlined treatment. We include the present discussion of the situation with $n\le 7$, 
therefore, in order that our account be self-contained. This treatment may also offer 
inspiration for future investigations concerning the solubility of systems of diagonal 
equations. We now have 
$$r_1=n\le 7,\quad r_3\le r_2=m\le 5\quad \text{and}\quad s\ge 22,$$
whence necessarily $r_4\ge 1$. For $1\le \nu\le 4$ let $i_\nu \in \{1,2,\ldots,s\}$ be any 
index counted by $r_\nu$. Thus, for example, we may take $i_1=1$ and $i_2=n+1$. Let
$$\mathrm I=\{1,2,\ldots,s\}\setminus \{i_1,i_2,i_3,i_4\}.$$
It transpires that a combinatorial observation concerning the set $\mathrm I$ greatly 
facilitates our analysis of certain auxiliary mean values, and this we formulate in the next 
lemma. In this context, we recall the notion of equivalence of indices introduced following 
equation (\ref{1.5}).

\begin{lemma}\label{lemma6.1} Subject to the hypotheses in the preamble, fix indices 
$\nu$ with $1\le \nu\le 4$ and $k\in\mathrm I$, and suppose that the linear forms 
$\Lambda_{i_\nu}$ and $\Lambda_k$ are linearly independent. Then $\mathrm I$ has two 
disjoint subsets $\mathrm I_0$ and $\mathrm I_1$, with eight elements each, and 
possessing the following properties:
\begin{enumerate}
\item[(i)] one has $k\in \mathrm I_0$, and for all $j\in\mathrm I_0$ the forms $\Lambda_j$ 
and $\Lambda_{i_\nu}$ are linearly independent;
\item[(ii)] the maximal number of mutually equivalent indices $j\in\mathrm I_1$ is four.
\end{enumerate}
\end{lemma}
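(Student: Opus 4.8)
The plan is to build $\mathrm I_0$ first, greedily, and then show that what remains of $\mathrm I$ after removing $\mathrm I_0$ still contains eight indices no five of which are mutually equivalent. Throughout I track the equivalence classes restricted to $\mathrm I$: recall that $\mathrm I = \{1,\ldots,s\}\setminus\{i_1,i_2,i_3,i_4\}$ has $s-4\ge 18$ elements, and since we removed one representative from each of the four largest classes, the class sizes within $\mathrm I$ are at most $r_1-1\le 6$ for the first, at most $r_2-1\le 4$ for the second, and at most $r_\nu$ for $\nu\ge 3$. In particular no class meets $\mathrm I$ in more than six indices, and the class of $i_\nu$ — call it $\mathcal C$ — meets $\mathrm I$ in at most $r_\nu - 1\le 6$ indices.

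First I would construct $\mathrm I_0$. The forms $\Lambda_j$ that are \emph{not} linearly independent from $\Lambda_{i_\nu}$ are exactly those equivalent to $i_\nu$, i.e.\ those in $\mathcal C$; within $\mathrm I$ there are at most $r_\nu-1\le 6$ of these. Hence $\mathrm I$ contains at least $18 - 6 = 12$ indices $j$ with $\Lambda_j$ linearly independent from $\Lambda_{i_\nu}$, and $k$ is one of them. Choose $\mathrm I_0$ to be any eight such indices that include $k$; property (i) is then immediate. The point requiring a little care is to make the choice of $\mathrm I_0$ so that property (ii) can still be arranged for $\mathrm I_1 \subseteq \mathrm I\setminus \mathrm I_0$ — so I would choose $\mathrm I_0$ greedily, at each step picking (when possible) an index from whichever equivalence class currently has the most members still available, thereby thinning the large classes out of $\mathrm I\setminus\mathrm I_0$ as aggressively as possible.

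Next I would verify property (ii). Set $\mathrm I' = \mathrm I\setminus \mathrm I_0$, which has $|\mathrm I| - 8 \ge 10$ elements. I must find $\mathrm I_1\subseteq \mathrm I'$ with $|\mathrm I_1| = 8$ in which no equivalence class contributes five indices. If every class meets $\mathrm I'$ in at most four indices we are already done. Otherwise some class $\mathcal D$ has at least five members in $\mathrm I'$; since no class has more than six members in $\mathrm I$ and the greedy construction placed as many members of the largest available class into $\mathrm I_0$ as it could, at most one class — namely $\mathcal D$ (the unique class of size $\ge 6$ in $\mathrm I$, which must be the class counted by $r_1$) — can be this large, and it has at most six members in $\mathrm I$, hence at most six in $\mathrm I'$; every other class meets $\mathrm I'$ in at most four indices. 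Now form $\mathrm I_1$ by taking at most four indices from $\mathcal D$ together with enough indices from the remaining classes to reach eight: since $|\mathrm I'| - |\mathcal D\cap\mathrm I'| \ge 10 - 6 = 4$, there are at least $4$ indices outside $\mathcal D$ available, so $4 + 4 = 8$ can indeed be assembled, and by construction no class contributes five. This gives (ii), and $\mathrm I_0, \mathrm I_1$ are disjoint with eight elements each.

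The main obstacle is bookkeeping the two competing demands on the large class $\mathcal C_{i_\nu}$ and the possibly large class $r_1$: we want to \emph{keep} enough indices (twelve) independent from $\Lambda_{i_\nu}$ to fill $\mathrm I_0$, but we also want $\mathrm I_0$ to \emph{absorb} the bulk of the $r_1$-class so that the residual set $\mathrm I'$ has no class of size exceeding what $\mathrm I_1$ can tolerate. If the $r_1$-class and the $i_\nu$-class coincide (i.e.\ $\nu=1$, which is excluded by the independence hypothesis since $\Lambda_{i_1}$ would not be independent from itself — so in fact $\nu \geq 2$ whenever $k$ is equivalent-independent from $i_\nu$ only trivially), this tension dissolves; when they differ, one checks the numbers $18 - 6 = 12 \ge 8$ and $10 - 6 = 4 \ge 8 - 4$ both hold with room to spare, so the greedy construction succeeds. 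I would present the argument by casework on whether the class counted by $r_1$ equals the class of $i_\nu$, the non-coincident case being the one that genuinely uses $s\ge 22$ and $m\le 5$.
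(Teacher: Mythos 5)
Your overall strategy is sound and genuinely different from the paper's: you construct $\mathrm I_0$ first and then argue that whatever remains supports a choice of $\mathrm I_1$, whereas the paper builds $\mathrm I_1$ explicitly by a case analysis on $\nu$ and on whether certain $r_\kappa=5$, filling in $\mathrm I_0$ afterwards. However, as written your argument for property (ii) contains a genuine gap, plus a confused side-remark.

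First, the parenthetical claim that $\nu=1$ ``is excluded by the independence hypothesis'' is wrong: the hypothesis is that $\Lambda_{i_\nu}$ and $\Lambda_k$ are linearly independent, which says nothing about $\Lambda_{i_\nu}$ against itself, and the paper explicitly treats $\nu=1$ as a substantive case. Second, and more seriously, the key assertion ``the greedy construction placed as many members of the largest available class into $\mathrm I_0$ as it could, [so] at most one class $\mathcal D$ can [have $\ge 5$ members in $\mathrm I'$], and it must be the $r_1$ class; every other class meets $\mathrm I'$ in at most four indices'' is false. For example, take $\nu=2$, $r_1=7$ and $r_2=\cdots=r_{12}=5$, so $s=62$ and the classes meeting $\mathrm I$ have sizes $6,4,4,4,5,5,5,5,5,5,5,5$. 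After $k$ and seven greedy picks (which cannot touch class $2$), several classes of size $5$ remain untouched in $\mathrm I'$: there are simply too many size-$5$ classes for eight picks to thin them all. Your subsequent argument (``take at most four from $\mathcal D$, fill with indices from the remaining classes'') then silently assumes those remaining classes have size $\le 4$, which need not hold.

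The good news is that the greedy device is unnecessary and the conclusion survives with a cleaner counting argument. Take $\mathrm I_0$ to be \emph{any} eight indices $j\in\mathrm I$ with $\Lambda_j$ independent of $\Lambda_{i_\nu}$ and with $k\in\mathrm I_0$; these exist since $\mathrm I$ has $s-4\ge 18$ elements of which at most $r_\nu-1\le 6$ lie in the class of $i_\nu$. Then $|\mathrm I'|=s-12\ge 10$ and every class meets $\mathrm I'$ in at most $6$ indices, with at most one class (the $r_1$-class) attaining $6$. One checks $\sum_c\min\bigl(|c\cap\mathrm I'|,4\bigr)\ge 8$ by cases: if no class exceeds $4$ the sum is $|\mathrm I'|\ge 10$; if exactly one class $c_0$ exceeds $4$ the sum is at least $4+\bigl(|\mathrm I'|-|c_0\cap\mathrm I'|\bigr)\ge 4+(10-6)=8$; and if two or more classes exceed $4$ the sum is already $\ge 8$. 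Hence one can draw eight indices from $\mathrm I'$ taking at most four from each class, giving a valid $\mathrm I_1$. With this replacement your approach works and is in fact shorter than the paper's case analysis.
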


\begin{proof} First suppose that $\nu=1$. Then there are $r_1-1$ indices $j\in\mathrm I$ 
that are counted by $r_1$, and $r_1=n\le 7$. We put $\min \{4,r_1-1\}$ of these indices 
into the set $\mathrm I_1$ and discard the remaining ones (if any). Note that we discard at 
most two indices in this first phase of the assignment process.\par

If $r_2=m=5$ then four indices $j\in\mathrm I$ are counted by $r_2$. If the index $k$ is 
among them, then we put it into the set $\mathrm I_0$ and put the remaining three indices 
into the set $\mathrm I_1$. If the index $k$ is not among them, then we put all four 
indices into the set $\mathrm I_1$, and then place the index $k$ into the set 
$\mathrm I_0$. Under the current assumptions, we have $r_1\ge r_2=5$, so following 
these two rounds of allocations, the subset $\mathrm I_1$ already has $7$ or $8$ 
elements. Since we discarded at most two indices earlier, we have sufficient indices not 
already allocated from $\mathrm I$ that we may assign the remaining ones to 
$\mathrm I_0$ and $\mathrm I_1$ arbitrarily so that each subset emerges with eight 
elements apiece. It is then apparent that properties (i) and (ii) are satisfied for this 
assignment.\par

If, meanwhile, one has $r_2=m\le 4$, then we adjust the second phase of the assignment 
process by putting the index $k$ into the set $\mathrm I_0$, but otherwise we assign 
indices not already allocated from $\mathrm I$ to $\mathrm I_0$ and $\mathrm I_1$ 
arbitrarily so that each subset again emerges with eight elements apiece. It is then again 
apparent that properties (i) and (ii) are satisfied for this assignment. This completes the 
proof of the lemma when $\nu=1$.\par

Now suppose that $\nu\ge 2$. Then there are $r_\nu -1\le r_2-1=m-1\le 4$ indices 
$j\in\mathrm I$ that are counted by $r_\nu$, and these we insert into the set 
$\mathrm I_1$. All of the $r_1-1$ indices $j\in\mathrm I$ counted by $r_1$ we put into the 
set $\mathrm I_0$. If the index $k$ is not yet in $\mathrm I_0$ then we insert it into this 
set. So far, we have at most $(r_1-1)+1\le 7$ elements assigned to $\mathrm I_0$ and at 
most $4$ elements assigned to $\mathrm I_1$.\par

If there is a suffix $\kappa\not \in \{1,\nu\}$ for which $r_\kappa=5$ and the index $k$ is 
not counted by $r_\kappa$, then all five indices $j\in\mathrm I$ counted by $r_\kappa$ are 
not yet distributed, and we put one into $\mathrm I_0$ and four into $\mathrm I_1$. Both 
in this situation, and when there is no such suffix $\kappa$, we may assign the remaining 
indices not already allocated from $\mathrm I$ to $\mathrm I_0$ and $\mathrm I_1$ 
arbitrarily so that each subset once more emerges with eight elements apiece. On this 
occasion as in earlier cases, it is again apparent that properties (i) and (ii) are satisfied for 
this assignment. This completes the proof of the lemma when $\nu\ge 2$.
\end{proof}

We now embark on the proof of Theorem \ref{theorem1.1} when $n\le 7$. We take
$$\eta_j=\eta \quad (j\in \mathrm I)\quad \text{and}\quad \eta_{i_\nu}=1\quad 
(1\le \nu\le 4).$$
We then define $\mathscr F_\bfeta$ by means of \eqref{3.3}. Again adopting the notation 
introduced in (\ref{5.3}), and writing
$$f_\nu=f(\Lambda_{i_\nu})\quad (1\le \nu\le 4)\quad \text{and}\quad 
H_\Tet=\prod_{j\in\Tet}h(\Lambda_j)\quad (\Tet\subseteq \mathrm I),$$
we have $\mathscr F_\bfeta = f_1f_2f_3f_4 H_{\mathrm I}$. It then follows as before that  
the proof of Theorem \ref{theorem1.1} when $n\le 7$ is completed by confirming the upper 
bound \eqref{5.2}.\par

Let $1\le \nu\le 4$, and let $\mathfrak a\subset [0,1]^2$ be measurable. Put
\begin{equation}\label{6.1}
I_\nu(\mathfrak a)=\iint_{\mathfrak a}|f_\nu|^4|H_{\mathrm I}|\,\mathrm d\alpha\,
\mathrm d\beta. 
\end{equation}
Then, an application of \eqref{3.10} reveals that
\begin{equation}\label{6.2}
\iint_\grn |\mathscr F_\bfeta|\,\mathrm d\alpha\,\mathrm d\beta \ll I_1(\grn)+I_2(\grn)
+I_3(\grn)+I_4(\grn).
\end{equation}
We fix $\nu\in\{1,2,3,4\}$ and estimate $I_\nu(\grn)$ by a method that is similar to that 
applied in the situation with $n\ge 8$ analysed in the previous section. We begin by defining 
the sets
\begin{align*}
\mathfrak p&=\{(\alpha,\beta)\in [0,1]^2:\Lambda_{i_\nu}\in\mathfrak k+\mathbb Z\},\\ 
\mathfrak P&=\{(\alpha,\beta)\in [0,1]^2:\Lambda_{i_\nu}\in\mathfrak K+\mathbb Z\} 
\end{align*}
and
$$\mathfrak P_k=\{(\alpha,\beta)\in\mathfrak P:|h(\Lambda_k)|\le PL^{-300}\}\quad 
(k\in\mathrm I).$$
Observe that $\text{card}(\mathrm I)\ge 18$. Moreover, under the current hypotheses, at 
most six of the forms $\Lambda_j$ with $j\in\mathrm I$ can be multiples of one another. It 
follows that there are indices $k_1$ and $k_2$ in $\mathrm I$ having the property that the 
linear forms $\Lambda_{k_1}$, $\Lambda_{k_2}$ and $\Lambda_{i_\nu}$ are pairwise 
linearly independent. Hence, if $(\alpha,\beta)\in[0,1]^2$ is neither in $\mathfrak p$ nor in 
$\mathfrak P_{k_1}\cup \mathfrak P_{k_2}$, then $|h(\Lambda_{k_\iota})|>PL^{-300}$ 
for both $\iota= 1$ and $\iota=2$, and hence Lemma \ref{lemma5.1} implies that 
$(\alpha,\beta)\in\grN$. We therefore deduce that for these indices $k_1$ and $k_2$, we 
have $\grn\subseteq \grp\cup \grP_{k_1}\cup \grP_{k_2}$. Consequently, the upper bound 
(\ref{5.2}) follows from (\ref{6.2}) once we confirm that for $1\le \nu\le 4$, and all 
$k\in \mathrm I$ for which $\Lam_k$ is independent of $\Lam_{i_\nu}$, one has the 
estimates
\begin{equation}\label{6.3}
I_\nu(\grp)\ll P^{s-8}L^{-1}\quad \text{and}\quad I_\nu(\grP_k)\ll P^{s-8}L^{-1}.
\end{equation}

\par We begin by estimating $I_\nu(\mathfrak p)$. Since $i_1$ is an index counted by 
$r_1$, the current assumptions show that among the indices $j\in\mathrm I$, no more than 
six are mutually equivalent. Hence, an appropriate reinterpretation of the argument leading 
to \eqref{5.6} yields the bound
$$\int_0^1\!\!\int_0^1 |H_{\mathrm I}(\alpha,\beta)|\,\mathrm d\alpha\,\mathrm d\beta  
\ll P^{s-12-2\tau +1/2}.$$
Further, Weyl's inequality \cite[Lemma 2.4]{hlm} shows that $f_\nu\ll P^{7/8+\varepsilon}$ 
for $(\alpha,\beta)\in\mathfrak p$. Hence, as an echo of \eqref{5.7}, we deduce from 
\eqref{6.1} that
\begin{equation}\label{6.4}
I_\nu(\mathfrak p)\ll P^{s-8-\tau}.
\end{equation}

\par The treatment of the sets $\mathfrak P_k$ makes use of Lemma \ref{lemma6.1}. In 
the notation of this lemma, we have 
$|H_{\mathrm I}|\le P^{s-20}|H_{\mathrm I_0}H_{\mathrm I_1}|$. Thus, by applying 
H\"older's inequality to (\ref{6.1}), we deduce that
\begin{equation}\label{6.5}
I_\nu(\mathfrak P_k)\le P^{s-20}T_1^{1/3}T_2^{2/3},
\end{equation}
where
\begin{equation}\label{6.6}
T_1=\int_0^1\!\!\int_0^1 |H_{\mathrm I_1}|^3\,\mathrm d\alpha\,\mathrm d\beta 
\quad \text{and}\quad T_2=\iint_{\mathfrak P_k}|f_\nu|^6|H_{\mathrm I_0}|^{3/2}
\,\mathrm d\alpha\,\mathrm d\beta .
\end{equation}
It follows from the property (ii) associated with the set $\mathrm I_1$ in Lemma 
\ref{lemma6.1} that this set contains four disjoint subsets $\{i,j\}$ having the property that 
$\Lambda_i$ and $\Lambda_j$ are linearly independent. We therefore see from 
\eqref{3.10} with $r=4$ that
\begin{equation}\label{6.7}
|H_{\mathrm I_1}(\alpha,\beta)|^3\ll \sum |h(\Lambda_i)h(\Lambda_j)|^{12},
\end{equation}
where the summation is taken over the four pairs $i,j$ comprising these subsets. A change 
of variables in combination with orthogonality and Lemma \ref{lemma2.1} deliver the bound
$$\int_0^1\!\!\int_0^1 |h(\Lambda_i)h(\Lambda_j)|^{12}\,\mathrm d\alpha\,\mathrm 
d\beta =\int_0^1\!\!\int_0^1 |h(\alpha)h(\beta)|^{12}\,\mathrm d\alpha\,\mathrm d\beta 
\ll P^{16}$$
for each of the four pairs $i,j$. We therefore deduce from (\ref{6.6}) and (\ref{6.7}) that
\begin{equation}\label{6.8} 
T_1\ll P^{16}.
\end{equation}

Next we estimate the mean value $T_2$. By applying H\"older's inequality in (\ref{6.6}), we 
obtain the bound
\begin{equation}\label{6.9}
T_2\le \prod_{j\in\mathrm I_0}\biggl( \iint_{\mathfrak P_k}|f_\nu|^6|h(\Lambda_j)|^{12}
\,\mathrm d\alpha\,\mathrm d\beta\biggr)^{1/8}.
\end{equation}
It follows from the property (i) associated with the set $\mathrm I_0$ in Lemma 
\ref{lemma6.1} that the forms $\Lambda_{i_\nu}$ and $\Lambda_j$ are linearly 
independent when $j\in \mathrm I_0$. By the transformation formula applied to the 
non-singular linear map $(\alpha,\beta)\to (\Lambda_{i_\nu}, \Lambda_j)$, it follows via 
\eqref{5.10} and Lemma \ref{lemma2.1} that
\begin{equation}\label{6.10}
\iint_{\mathfrak P_k}|f_\nu|^6|h(\Lambda_j)|^{12}\,\mathrm d\alpha\,\mathrm d\beta\ll 
\int_{\mathfrak K}|f(\xi)|^6\,\mathrm d\xi \int_0^1|h(\zeta)|^{12}\,\mathrm d\zeta \ll 
P^{10}.
\end{equation}
Here we made use of the fact that $\mathfrak P_k$ maps to a compact set, and in addition 
that $f$ and $h$ are functions of period $1$. It follows from the property (i) associated with 
the set $\mathrm I_0$ in Lemma \ref{lemma6.1}, moreover, that $k\in\mathrm I_0$, and in 
this case the last estimate can be improved. Since one has the upper bound 
$|h(\Lambda_k)|\le PL^{-300}$ for $(\alpha,\beta)\in\mathfrak P_k$, we see that
$$\iint_{\mathfrak P_k}|f_\nu|^6|h(\Lambda_k)|^{12}\,\mathrm d\alpha\,\mathrm d\beta
\le P^{0.04}L^{-12}\iint_{\mathfrak P_k}|f_\nu|^6|h(\Lambda_k)|^{11.96}\,\mathrm 
d\alpha\,\mathrm d\beta.$$
We may now apply the transformation formula as before, and conclude via Lemma 
\ref{lemma2.1} that
\begin{equation}\label{6.11}
\iint_{\mathfrak P_k}|f_\nu|^6|h(\Lambda_k)|^{12}\,\mathrm d\alpha\,\mathrm d\beta 
\ll P^{10}L^{-12}.
\end{equation}
By substituting (\ref{6.10}) and (\ref{6.11}) into (\ref{6.9}), we arrive at the upper bound
\begin{equation}\label{6.12}
T_2\ll P^{10}L^{-3/2}.
\end{equation}

We are now equipped to derive the conclusion we have sought. By substituting (\ref{6.8}) 
and (\ref{6.12}) into (\ref{6.5}), we infer the estimate
$$I_\nu(\mathfrak P_k)\ll P^{s-20}(P^{16})^{1/3}(P^{10}L^{-3/2})^{2/3}
=P^{s-8}L^{-1}$$
that is valid for all $k\in\mathrm I$ satisfying the condition that $\Lambda_k$ is linearly 
independent of $\Lambda_{i_\nu}$. This, together with (\ref{6.4}), confirms the bounds 
(\ref{6.3}). Hence, as we explained in the preamble to (\ref{6.3}), the upper bound 
\eqref{5.2} does indeed hold. Since the bound (\ref{5.1}) holds in the present 
circumstances, once more as a consequence of Lemmata \ref{lemma3.1}, \ref{lemma3.3} 
and \ref{lemma3.4}, we again conclude from (\ref{3.5}) that 
$\mathscr N(P)\ge \mathscr N_\bfeta (P)\gg P^{s-8}$. This completes the proof of 
Theorem \ref{theorem1.1} for $n\le 7$.

\section{Obstructions}
In this section we discuss the pair of equations \eqref{1.2} and substantiate our 
introductory claims concerning some of its properties. Bright \cite{MB} has analysed in detail 
the validity of the Hasse principle for diagonal quartic forms in four variables. The following 
example (see \cite[Example 2.3]{MB}), is just one of the cases where the Hasse principle 
fails.

\begin{lemma}\label{lemma7.1} Let $p$ be a prime. Then the equation 
$x_1^4+x_2^4-6x_3^4-12x_4^4=0$ has a non-trivial solution in $\mathbb Q_p$, but only 
the trivial solution in $\mathbb Z$. 
\end{lemma}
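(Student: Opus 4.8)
The plan is to prove the two halves separately. The existence of non-trivial $p$-adic points is the easier direction and I would handle it first. For $p=2,3$, one can exhibit explicit solutions (or apply a direct Hensel-lifting argument starting from a suitable residue class); since the form represents $0$ non-trivially over $\dbR$ in an obvious way, the archimedean place is trivial. For the remaining primes $p\ge 5$, the coefficient $6$ is a unit and one can use the standard Chevalley–Warning / point-counting argument: a non-degenerate quartic in four variables over $\dbF_p$ with $p$ large has a non-zero projective point, and when $p$ is small one checks by hand or lifts. Since the statement is quoted from Bright \cite[Example 2.3]{MB}, I would simply invoke that reference for the $p$-adic solubility rather than reproving it, and devote the real work to the integral insolubility.

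For the claim that only $x_1=x_2=x_3=x_4=0$ works over $\dbZ$, I would argue by infinite descent modulo a carefully chosen prime. Suppose $(x_1,x_2,x_3,x_4)$ is a non-zero integral solution; by dividing out the largest common factor we may assume $\gcd(x_1,x_2,x_3,x_4)=1$. The key arithmetic input is the structure of fourth powers modulo a well-chosen modulus: fourth powers modulo $16$ lie in $\{0,1\}$, and fourth powers modulo $5$ lie in $\{0,1\}$, and more to the point, one should look modulo a prime $p$ for which the relation $x_1^4+x_2^4\equiv 6x_3^4+12x_4^4 \pmod{p}$ forces all variables to be divisible by $p$. A natural candidate is $p=13$: one computes the set of fourth powers in $\dbF_{13}^\times$ (it has $3$ elements, being the image of the fourth-power map on the cyclic group of order $12$), and checks — this is the one genuine computation — that the only way $a+b\equiv 6c+12d\pmod{13}$ holds with $a,b,c,d$ each a fourth power or zero is $a\equiv b\equiv c\equiv d\equiv 0$. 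If $13$ does not do the job one replaces it by another prime $p\equiv 1\pmod 4$ (so that fourth powers form a small subgroup) chosen so that $6$ and $12$ land outside the relevant difference set; Bright's analysis guarantees such a prime exists because the Hasse principle failure for this form is detected by a Brauer–Manin obstruction, and the obstruction localizes at a specific place. Once $p\mid x_j$ for all $j$, substituting $x_j=p x_j'$ and dividing the equation by $p^4$ returns a strictly smaller solution, contradicting the coprimality reduction (or, phrased without reduction, giving an infinite descent).

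The main obstacle is identifying the correct modulus and verifying the combinatorial claim that the fourth-power "sumset" relation has only the trivial solution modulo that modulus; everything else is routine. If no single prime suffices, the fallback is to combine congruence information modulo $16$ with information modulo one or two small primes, which is exactly the shape of Bright's Brauer–Manin computation — so in practice I would lean on \cite{MB} for the precise obstruction and present the descent as the packaging of that obstruction into an elementary statement. A secondary, minor point to get right is the reduction step: one must be careful that dividing through by $p^4$ preserves integrality of all four variables, which is immediate once $p$ divides each $x_j$, and that the descent terminates, which follows from strict decrease of $\max_j|x_j|$ (or of the $p$-adic valuation of $\gcd$).
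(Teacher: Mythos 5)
The paper does not prove this lemma: it is quoted directly from Bright \cite[Example 2.3]{MB}, where it is established as a Brauer--Manin obstruction computation. So the ``paper's own proof'' is a citation, and in practice you would be expected to cite it too. That said, your proposal contains a genuine structural error worth flagging.

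Your descent plan cannot work, even in principle, and the reason is the \emph{first} half of the lemma. Suppose there were a prime $p$ (or a finite set of moduli) with the property that every integral solution of $x_1^4+x_2^4\equiv 6x_3^4+12x_4^4\pmod{p}$ has all $x_j\equiv 0\pmod p$. Take the non-trivial $p$-adic solution $\bfz\in\dbZ_p^4\setminus\{\mathbf 0\}$ whose existence the lemma asserts. Dividing by a suitable power of $p$, we may take $\bfz$ primitive, i.e.\ not all coordinates divisible by $p$; reducing mod $p$ then gives a non-trivial solution of the congruence, a contradiction. So no such prime exists, and the ``one genuine computation'' you propose at $p=13$ must fail. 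Indeed it does: the fourth powers mod $13$ are $\{0,1,3,9\}$, and taking $(x_1,x_2,x_3,x_4)\equiv(0,1,4,1)$ gives $x_1^4+x_2^4-6x_3^4-12x_4^4\equiv 0+1-6\cdot 9-12\cdot 1\equiv 1-2-12\equiv 0\pmod{13}$ with not all coordinates divisible by $13$. The same structural objection rules out any fallback combining congruences mod $16$ and a few small primes: any finite collection of congruence conditions is a local condition, and the whole point of the example is that it is everywhere locally soluble. The Brauer--Manin obstruction is not ``localized at a specific place''; it is a reciprocity constraint (a sum of local invariants over \emph{all} places must vanish), which is precisely the kind of genuinely global input that a congruence-based infinite descent cannot capture. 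The correct route is to cite Bright, or to reproduce his algebraic-geometric computation, rather than attempt a descent.
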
     

We now study the form $y_1^4+y_2^4+3y_3^4+5y_4^4+7y_5^4$. It turns out that this 
form is universal over all $\mathbb Q_p$, in a strong sense. This is the content of Lemma 
\ref{lemma7.4} below. We begin with the distribution of the above form in residue classes.

\begin{lemma}\label{lemma7.2} Let $p$ be an odd prime, and let $a\in \mathbb Z$. Then, 
the congruence
\begin{equation}\label{7.1}
y_1^4+y_2^4+3y_3^4+5y_4^4+7y_5^4 \equiv a \mmod p
\end{equation}
has a solution with not all the variables divisible by $p$. If $p=3$, $5$ or $7$, then the 
solution can be so chosen that the variable with coefficient $p$ is zero.
\end{lemma}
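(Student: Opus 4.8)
The plan is to count fourth-power residues modulo $p$ and invoke a Cauchy--Davenport-type argument. First I would recall that the number of nonzero fourth powers modulo an odd prime $p$ is exactly $(p-1)/d$ where $d=\gcd(4,p-1)$; thus $d\in\{1,2,4\}$, and for $p\equiv 3\pmod 4$ every residue is already a fourth power (since then $d=2$ and squares and fourth powers coincide in a way that, combined with a second square, exhausts everything — more precisely, two squares represent everything mod such $p$). In general, writing $\mathscr Q$ for the set of fourth-power residues (including $0$), one has $|\mathscr Q|\ge (p-1)/4+1 = (p+3)/4$. For the generic case I would show that the sumset $\mathscr Q+\mathscr Q$ already covers all of $\mathbb Z/p\mathbb Z$ once $p$ is large enough, by Cauchy--Davenport: $|\mathscr Q+\mathscr Q|\ge \min(p,\,2|\mathscr Q|-1)\ge \min(p,\,(p+3)/2)$, which is insufficient on its own, so I would instead use $|\mathscr Q+\mathscr Q+\mathscr Q|\ge\min(p,3|\mathscr Q|-2)\ge \min(p,(3p+1)/4)$, still short; hence one genuinely needs the coefficients.

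The cleaner route is to use four or five of the terms. Set $S_c=\{c y^4 : y\in\mathbb Z/p\mathbb Z\}$ for a coefficient $c$ coprime to $p$; then $|S_c|=|\mathscr Q|\ge (p+3)/4$. By Cauchy--Davenport, $|S_1+S_1+S_3+S_5|\ge \min(p, 4|\mathscr Q|-3)\ge\min(p,p+3-3)=p$, so $S_1+S_1+S_3+S_5 = \mathbb Z/p\mathbb Z$ whenever $p\ge 11$ and $p\nmid 105$; this already handles \eqref{7.1} for all primes $p\ge 11$ with $p\neq 3,5,7$ (here $p\nmid 105$ is automatic), and moreover it produces a solution with $y_5=0$, which gives the "variable with coefficient $p$ is zero" conclusion vacuously for those $p$. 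For the finitely many small primes $p\in\{3,5,7\}$ and $p=2$ (excluded, $p$ odd) — wait, and also checking $p=11,13,\dots$ against the coprimality of the coefficients to $p$ — I would observe that when $p\in\{3,5,7\}$ exactly one coefficient among $1,1,3,5,7$ is divisible by $p$, so I drop that term and must represent every $a$ by the remaining four coefficients, all coprime to $p$; then I verify by direct finite computation (or by noting $|\mathscr Q|$ is large relative to these tiny $p$ and running Cauchy--Davenport on the four surviving terms) that every residue class is hit. This simultaneously gives the sharpened conclusion that the $p$-divisible variable can be taken to be $0$.

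The remaining edge cases are $p=11$ itself (and possibly $p=13$) where $4|\mathscr Q|-3$ may just barely fail to reach $p$: for $p=11$ one has $d=\gcd(4,10)=2$, so fourth powers are the same as squares, $|\mathscr Q|=6$, and $4\cdot 6-3=21\ge 11$, fine; for $p=13$, $d=4$, $|\mathscr Q|=4$, and $4\cdot 4-3=13$, exactly enough. So in fact the Cauchy--Davenport bound $4|\mathscr Q|-3\ge p$ holds for all odd primes $p\ge 11$: when $p\equiv 1\pmod 4$ it reads $p-1-3+4\ge p$, i.e. $p\ge p$, equality; when $p\equiv 3\pmod 4$ it reads $2(p-1)+2-3\ge p$, i.e. $2p-3\ge p$, true for $p\ge 3$. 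Thus a single clean application suffices for all $p\ge 11$, and only $p=3,5,7$ require the modified four-term argument with a direct check.

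The main obstacle is bookkeeping around which coefficient is divisible by $p$ for $p\in\{3,5,7\}$ and confirming Cauchy--Davenport still applies with the four surviving nonzero coefficients — one must check $4|\mathscr Q|-3\ge p$ for $p=3$ ($|\mathscr Q|=3$, $9\ge 3$), $p=5$ ($d=4$, $|\mathscr Q|=2$, $8\ge 5$), $p=7$ ($d=2$, $|\mathscr Q|=4$, $13\ge 7$); all hold, so no ad hoc computation is even needed. The genuinely delicate point, rather, is ensuring the four coefficients used in each case are pairwise usable in Cauchy--Davenport (they need only be individually coprime to $p$, which they are), and verifying the edge equality case $p\equiv 1\pmod 4$ where the sumset bound is tight — there one wants strict attention to the fact that Cauchy--Davenport gives $\ge\min(p,\Sigma|A_i|-(k-1))$ with no room to spare, which is nonetheless enough.
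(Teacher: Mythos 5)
Your Cauchy--Davenport accounting is correct as far as it goes, but it proves only that the congruence \eqref{7.1} is \emph{solvable} for every $a$ --- it does not establish the crucial clause that not all variables are divisible by $p$. Your sets $S_c$ contain $0$ (you deliberately put $0\in\mathscr Q$), so when $a\equiv 0\pmod p$ the sumset identity $S_1+S_1+S_3+S_5=\mathbb Z/p\mathbb Z$ is consistent with the only representation of $0$ being the all-zero tuple, which is precisely the solution the lemma forbids. Cauchy--Davenport gives a lower bound on the cardinality of a sumset, not a lower bound on the number of representations of any individual element, so you cannot extract a second (nontrivial) representation of $0$ from it. This is not a fussy technicality: since Lemma \ref{lemma7.2} feeds directly into the Hensel-lifting argument of Lemma \ref{lemma7.4}, one genuinely needs a representative $\bfy$ with some $y_j\not\equiv 0\pmod p$ (and indeed with $p\nmid 4b_jy_j^3$) in order to lift.

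The paper's proof sidesteps this exactly by constraining the \emph{first} variable to $1\le y_1\le p-1$, so that the first summand set $\{y_1^4:1\le y_1\le p-1\}$ has cardinality at least $(p-1)/4$ but does \emph{not} contain $0$. Every element of the resulting sumset is then automatically represented with $y_1\not\equiv 0\pmod p$. The price is that one term in Cauchy--Davenport loses the ``$+1$'' that you got from including $0$, so $(p-1)/4+3\bigl((p-1)/4+1\bigr)-3=p-1<p$: four terms no longer suffice, and one must use all five, obtaining $\min\{5(p-1)/4,p\}=p$ for $p\ge 11$. For $p=7$ the paper can get away with three terms because $d=\gcd(4,6)=2$ gives three nonzero fourth-power residues rather than the worst-case $(p-1)/4$, and for $p=3,5$ it resorts to a direct check since $y^4\in\{0,1\}$. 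Your proposal drops the ``$y_1$ nonzero'' restriction, and so cannot close this gap without an additional input (e.g.\ fixing $y_1=1$ from the outset and covering $a-1$ with the remaining four terms, or invoking Chevalley--Warning on the full five-variable form when $a\equiv 0$). As a smaller point, for $p=3$ one has $\mathscr Q=\{0,1\}$, so $|\mathscr Q|=2$, not $3$; the inequality $4\cdot 2-3\ge 3$ still holds, so this is only a slip, but it should be corrected.
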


\begin{proof} The set $\{y^4: 1\le y\le p-1\}$ has at least $(p-1)/4$ elements. We write 
\begin{equation}\label{7.2}
b_1=b_2=1,\quad b_3=3,\quad b_4=5,\quad b_5=7.
\end{equation}
Then, by repeated use of the Cauchy-Davenport Theorem \cite[Lemma 2.14]{hlm}, for 
$p>7$ and $1\le \nu\le 5$, one finds that the set
$$\biggl\{ \sum_{j=1}^\nu b_jy_j^4:\text{$1\le y_1\le p-1$ and $1\le y_j\le p$ 
$(2\le j\le \nu)$}\biggr\}$$
contains at least $\min\{\nu(p-1)/4, p\}$ residue classes modulo $p$. Consequently, when 
$p\ge 11$  and $\nu=5$, all congruence classes modulo $p$ are covered, and in particular 
the class $a$ modulo $p$. This proves the lemma when $p\ge 11$.\par

If $p=7$ then there are $3$ fourth power residues modulo 7, and the above argument 
shows that when $1\le \nu\le 3$, the above set contains at least $\min\{3\nu,7\}$ residue 
classes modulo $7$. In particular, the values of the form $y_1^4+y_2^4+3y_3^4$, with 
$1\le y_1\le 6$ and $1\le y_2,y_3\le 7$, range over all residue classes modulo $7$, and 
in particular the class $a$ modulo $7$. In the cases $p=3$ and $5$ one has $y_i^4\in 
\{0,1\}$ modulo $p$, and a trivial check of cases verifies that all residue classes $a$ 
modulo $p$ are indeed covered in the prescribed manner.
\end{proof}

\begin{lemma}\label{lemma7.3} Let $a\in\mathbb Z$. Then the congruence
\begin{equation}\label{7.3}
y_1^4+y_2^4+3y_3^4+5y_4^4+7y_5^4\equiv a\mmod{16}
\end{equation}
has a solution in which one at least of the variables is equal to $1$.
\end{lemma}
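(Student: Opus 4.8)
The plan is to verify Lemma \ref{lemma7.3} by a direct computation of fourth powers modulo $16$. First I would record that for any integer $y$ one has $y^4\equiv 0\mmod{16}$ when $y$ is even, and $y^4\equiv 1\mmod{16}$ when $y$ is odd; this is the single arithmetic fact on which everything rests, and it follows by checking $1^4,3^4,5^4,7^4$ directly, all of which are $\equiv 1$. Consequently, working modulo $16$, each of the five terms $y_j^4$, $3y_3^4$, $5y_4^4$, $7y_5^4$ contributes either $0$ (even variable) or its coefficient (odd variable), so the form $y_1^4+y_2^4+3y_3^4+5y_4^4+7y_5^4$ represents, modulo $16$, exactly the set of subset sums of the multiset $\{1,1,3,5,7\}$.

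Next I would enumerate those subset sums. Keeping the constraint that at least one variable equals $1$ in mind, I would always include at least one of the two unit coefficients, so the attainable residues are $1+S$ where $S$ ranges over subset sums of $\{1,3,5,7\}$ (the second unit contributes another optional $1$, already subsumed). The subset sums of $\{1,3,5,7\}$ are $0,1,3,5,7,1+3=4,1+5=6,1+7=8,3+5=8,3+7=10,5+7=12,1+3+5=9,1+3+7=11,1+5+7=13,3+5+7=15,1+3+5+7=16\equiv 0$; collecting these gives $\{0,1,3,4,5,6,7,8,9,10,11,12,13,15\}$ modulo $16$. Adding the forced $1$ yields $\{1,2,4,5,6,7,8,9,10,11,12,13,14,0\}$. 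Combining with the option of taking the second unit variable odd as well (adding a further $1$), one also picks up $3$ and $15$. A short tabulation of this kind shows that every residue class modulo $16$ is hit, and in each case one may exhibit the solution explicitly with one variable set to $1$ and the rest chosen from $\{1,2\}$ according to the prescribed parity pattern.

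The only mild subtlety — and the step I would be most careful about — is bookkeeping: ensuring the enumeration of subset sums is complete and that the class $a\equiv 15\mmod{16}$ is covered, since $15$ is not a subset sum of $\{1,3,5,7\}$ alone but requires using both unit variables as odd numbers (giving $1+1+5+7+\ldots$) or the full set $\{1,1,3,5,7\}$ which sums to $17\equiv 1$; in fact $15$ arises as $1+3+5+7-2$, which is not directly a subset sum, so one should instead note $3+5+7=15$ and set $y_1=1$ (with $y_2$ even), giving $1^4+0+3y_3^4+5y_4^4+7y_5^4\equiv 1+3+5+7=16\equiv 0$ — hence one needs $y_3,y_4,y_5$ odd and exactly one unit variable odd, producing $1+3+5+7\equiv 0$, so $15$ must instead be obtained as $3+5+7$ with \emph{both} unit coefficients from even variables except that the hypothesis forces a variable equal to $1$; placing that forced $1$ among $y_3$, $y_4$ or $y_5$ resolves it, e.g.\ $y_3=1$, $y_1=y_2$ even, $y_4,y_5$ odd gives $3+5+7=15$. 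I would therefore present the result as an explicit table assigning, to each residue $a\in\{0,1,\ldots,15\}$, a choice of parities (with one variable pinned to $1$) realizing $a$, and remark that this finite check completes the proof.
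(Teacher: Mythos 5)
Your proposal is correct and follows the same route as the paper, which simply records that $y^4\in\{0,1\}\pmod{16}$ and then asserts that a direct check of the sixteen residue classes furnishes a representation with some $y_j=1$. The intermediate step in your write-up where you ``add a further $1$'' to pick up $3$ and $15$ is stated a little loosely (setting $y_2$ odd as well means the remaining subset sums range over $\{3,5,7\}$, not $\{1,3,5,7\}$), but you correct yourself by noting the forced value $1$ may just as well be placed among $y_3,y_4,y_5$, e.g.\ $y_3=1$, $y_4,y_5$ odd, $y_1,y_2$ even gives $3+5+7\equiv 15$, and the full subset-sum set of $\{1,1,3,5,7\}$ with at least one term selected does cover all of $\mathbb Z/16\mathbb Z$.
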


\begin{proof} One has $y_i^4\in\{0,1\}$ modulo $16$, and so a direct check of cases 
verifies that all residues $a$ modulo $16$ possess a representation of the desired type.
\end{proof}

\begin{lemma}\label{lemma7.4} Let $p$ be a prime and $a\in \mathbb Z$. Then the 
equation
$$z_1^4+z_2^4+3z_3^4+5z_4^4+7z_5^4=a$$
has a solution in $\mathbb Z_p$ in which not all of the variables are zero.
\end{lemma}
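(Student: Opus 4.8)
The plan is to deduce Lemma \ref{lemma7.4} from the two congruence lemmas just proved, using a Hensel-type lifting argument. The form $G(\bfz)=z_1^4+z_2^4+3z_3^4+5z_4^4+7z_5^4$ has partial derivatives $\partial G/\partial z_i = c_i\cdot 4z_i^3$, where $c_i\in\{1,1,3,5,7\}$; the crucial point is that if some variable $z_i$ is a unit in $\dbZ_p$ and $p$ is odd, then $4c_iz_i^3$ is a $p$-adic unit, so Hensel's lemma in the strong (one-variable) form applies. For $p=2$ one needs the quantitative version: if $G(\bfz)\equiv a\mmod{2^{k}}$ with $k$ large enough relative to $v_2(\partial G/\partial z_i)$, one can lift. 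Since $\partial G/\partial z_i=4c_iz_i^3$ and $c_i$ is odd, $v_2(4c_iz_i^3)=2$ when $z_i$ is odd, so the standard Hensel condition requires a solution modulo $2^{2\cdot 2+1}=2^5=32$; however Lemma \ref{lemma7.3} only gives solubility modulo $16$. I would therefore first check whether solubility mod $16$ already suffices here — indeed it does, because for a single quartic $t\mapsto t^4$ one has the sharper statement that $t^4$ ranges over a full set of residues that lift: concretely, if $u$ is an odd $2$-adic integer then the map $t\mapsto t^4$ on $1+8\dbZ_2$ is a bijection onto $1+8\dbZ_2$ (since $4$ is a unit times $2^2$ and $(1+8\dbZ_2,\cdot)\cong\dbZ_2$ via $\log$, on which multiplication by $4$ is an automorphism of the relevant piece after accounting for the $2$-torsion). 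Thus once one variable is pinned to an odd residue and the remaining four are frozen, solving $z_1^4\equiv a-(\text{frozen})\mmod{8}$ and then lifting the single biquadratic is the clean route; Lemma \ref{lemma7.3}, which fixes one variable equal to $1$, is exactly calibrated to provide the needed congruence mod $16$ and hence mod $8$ after absorbing.

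Concretely, the key steps in order: (1) Dispatch the odd primes $p\ne 3,5,7$. By Lemma \ref{lemma7.2} there is a solution mod $p$ with not all variables $\equiv 0$; pick a variable $z_i$ that is a unit, freeze the other four at their chosen residues, and apply Hensel's lemma to the single-variable polynomial $c_iT^4+(\text{const})-a$, whose derivative $4c_iz_i^3$ is a $p$-adic unit. This yields a $\dbZ_p$-solution. (2) Handle $p=3,5,7$ separately, because now one of the coefficients equals $p$ and $z_p^4$ contributes nothing mod $p$; here the second sentence of Lemma \ref{lemma7.2} gives a mod-$p$ solution with the coefficient-$p$ variable zero and not all of the remaining four variables $\equiv 0\mmod p$, so again a unit variable is available among those with coefficient coprime to $p$, and the same Hensel lift applies. (3) Handle $p=2$. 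Use Lemma \ref{lemma7.3} to obtain a solution mod $16$ with some variable equal to $1$. The clean way to finish: among the five coefficients there are at least two equal to $1$ (namely $c_1=c_2=1$), so one may arrange a unit variable, say $z_1$, and reduce to solving $z_1^4\equiv a - (z_2^4+3z_3^4+5z_4^4+7z_5^4)\mmod{2^N}$ for all $N$, with $z_2,\dots,z_5$ fixed at the values supplied by Lemma \ref{lemma7.3}. By that lemma the right-hand side is $\equiv a'\mmod{16}$ for the appropriate $a'$ that is hit by some odd fourth power mod $16$; then invoke the fact that the fourth-power map is surjective from $\dbZ_2^\times$ onto $1+16\dbZ_2$ (equivalently, $(\dbZ_2^\times)^4 = 1+16\dbZ_2$), so the congruence lifts to all powers of $2$ and thus to $\dbZ_2$.

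The main obstacle I anticipate is the $p=2$ case, specifically bridging the gap between "soluble mod $16$" (what Lemma \ref{lemma7.3} gives) and what Hensel's lemma in its naive form demands (soluble mod $32$, since the derivative of $T^4$ at an odd point has $2$-adic valuation $2$). The resolution is not to apply naive Hensel but to use the structure of $(\dbZ_2^\times)^4$: one has the exact statement $(\dbZ_2^\times)^4 = 1+16\dbZ_2$, so a single biquadratic congruence that is soluble mod $16$ with a unit solution is automatically soluble in $\dbZ_2$. Verifying this structural fact (or citing it) is the technical heart; everything else is routine Hensel lifting on a one-variable polynomial with unit derivative. A secondary minor point is bookkeeping in step (2): one must make sure that the mod-$p$ solution guaranteed by Lemma \ref{lemma7.2} (with the coefficient-$p$ variable zero) still has a unit among $y_1,y_2$ or whichever variables carry coefficients coprime to $p$ — but "not all variables divisible by $p$" together with "the coefficient-$p$ variable is zero" forces exactly this, so there is no real difficulty. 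Assembling these three cases gives the claimed $\dbZ_p$-point for every prime $p$, with not all coordinates zero.
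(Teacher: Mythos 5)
Your proposal matches the paper's proof essentially line for line: for odd $p$ one lifts the mod-$p$ solution supplied by Lemma~\ref{lemma7.2} via one-variable Hensel at a unit coordinate whose coefficient is coprime to $p$ (the paper does not split $p\in\{3,5,7\}$ into a separate case, since the second sentence of Lemma~\ref{lemma7.2} already guarantees the unit variable avoids the coefficient equal to $p$), and for $p=2$ one combines Lemma~\ref{lemma7.3} with the structural fact $(\mathbb Z_2^\times)^4=1+16\mathbb Z_2$, which is exactly what the paper invokes via the structure of the group of reduced residues mod~$2^h$ for $h\ge 3$. One small expositional slip: Lemma~\ref{lemma7.3} only guarantees that \emph{some} variable equals $1$, not that it can be taken to be $z_1$ (the fact that two coefficients equal $1$ does not let you choose which variable is the unit), but since your lifting argument works for any odd coefficient $c_j$ attached to the unit variable, this does not create a gap.
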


\begin{proof} Let $p$ be an odd prime and adopt the notation (\ref{7.2}). Then it follows 
from Lemma \ref{lemma7.2} that there is a solution $\bfy\in \dbZ^5$ of the congruence 
\eqref{7.1} having the property that for some index $j$ one has $p\nmid 4b_jy_j^3$. Fixing 
$z_i=y_i$ for $i\ne j$, it is a consequence of Hensel's lemma that there exists 
$z_j\in\mathbb Z_p\setminus \{0\}$ with $z_j\equiv y_j\mmod p$ and 
$b_1z_1^4+\ldots +b_5z_5^4=a$. Thus, indeed, one has $\bfz\ne \mathbf0$.\par

When $p=2$ only modest adjustments are required in this argument. Here one observes 
that the structure of the group of reduced residues modulo $2^h$ for $h\ge 3$ ensures 
that if $b$ is an integer with $b\equiv 1\mmod{16}$, then there is a $2$-adic integer 
$\beta$ with $\bet^4=b$. Since Lemma \ref{lemma7.3} ensures that the congruence 
(\ref{7.3}) has a solution $\bfy\in \dbZ^5$ having the property that for some index $j$ one 
has $y_j=1$, we may proceed as before to show that there is a solution $\bfz\in 
\dbZ_2^5\setminus \{\mathbf 0\}$ to the equation $b_1z_1^4+\ldots +b_5z_5^4=a$.
\end{proof}

Now we turn to the system of equations \eqref{1.2}. Let $p$ be any prime number. It 
follows from Lemma \ref{lemma7.1} that there is a solution $(x_1,\ldots ,x_4)\in 
\dbZ_p^4\setminus \{\mathbf 0\}$ of the equation $x_1^4+x_2^4-6x_3^4-12x_4^4=0$. 
By multiplying the coordinates of this solution by an appropriate unit, moreover, there is no 
loss of generality in assuming that $x_4$ is a rational integer. Fixing this value of $x_4$, 
there is a solution $(x_5,\ldots ,x_9)\in \mathbb Z_p^5\setminus \{\mathbf 0\}$ of the 
equation
$$ x_4^4=7x_5^4+5x_6^4+3x_7^4+x_8^4+x_9^4.$$
This follows from Lemma \ref{lemma7.4}. With $x_j=0$ for $j\ge 10$, this provides us with 
a non-singular solution of \eqref{1.2} with coordinates in $\mathbb Z_p$. The existence of 
non-singular real solutions to the system (\ref{1.2}) is plain. This confirms that the system 
of equations (\ref{1.2}) has non-singular solutions in all completions of the rationals, as 
claimed in the preamble to the statement of Theorem \ref{theorem1.1}.\par

However, as a consequence of Lemma \ref{lemma7.1}, any solution of \eqref{1.2} in 
rational integers must have $x_1=x_2=x_3=x_4=0$. But then one has
$$7x_5^4+5x_6^4+3x_7^4+x_8^4+\ldots +x_s^4=0.$$
Since the form on the left hand side of this equation is positive definite, we are forced to 
conclude that $x_j=0$ for $1\le j\le s$, whence \eqref{1.2} has no solution in the integers 
other than the trivial solution $\bfx=\mathbf0$. As claimed in the introduction, this shows 
that the Hasse principle fails for the system of equations \eqref{1.2}. 

\bibliographystyle{amsbracket}
\providecommand{\bysame}{\leavevmode\hbox to3em{\hrulefill}\thinspace}

\end{document}